\tikzset{vertex/.style={circle,fill=black,inner sep=1.3pt,outer sep=2pt},
         mvertex/.style={rectangle,draw=black,thick,inner sep=2pt,outer sep=2pt},
         cvertex/.style={circle,fill=white,draw=black,thick,inner sep=1.7pt,outer sep=2pt},
         cver/.style={circle,fill=white,draw=black,thick,inner sep=3pt,outer sep=2pt},
         rver/.style={rectangle,fill=white,draw=black,thick,inner sep=2pt,outer sep=2pt},
         tvertex/.style={inner sep=1pt,font=\upshape},
         unvertex/.style={circle,fill=white,draw=white,inner sep=1pt},
         fill1/.style={fill=black!20,draw=black!20},
         fill2/.style={fill=black!40,draw=black!40},
         fill12/.style={fill=black!60,draw=black!60},
         >=stealth',
         leadsto/.style={-angle 90,decorate,decoration=snake,very thick},
         cut/.style={decorate,decoration=saw,very thick}}
\newtheorem{theorem}{Theorem}[section]
\newtheorem{theoremi}{Theorem}
\newtheorem{propositioni}[theoremi]{Proposition}
\newtheorem{corollary}[theorem]{Corollary}
\newtheorem{lemma}[theorem]{Lemma}
\newtheorem{proposition}[theorem]{Proposition}
\newtheorem{definition-proposition}[theorem]{Definition-Proposition}
\theoremstyle{definition}
\newtheorem{definition}[theorem]{Definition}
\newtheorem{remark}[theorem]{Remark}
\newtheorem{example}[theorem]{Example}
\setlist[1]{label={\upshape(\arabic*)}}
\setlist[2]{label={\upshape(\alph*)}}
\renewcommand{\AA}{\mathcal{A}}
\newcommand{\CC}{\mathcal{C}}
\newcommand{\MM}{\mathcal{M}}
\newcommand{\DD}{\mathcal{D}}
\newcommand{\FF}{\mathcal{F}}
\newcommand{\PP}{\mathcal{P}}
\newcommand{\II}{\mathcal{I}}
\newcommand{\WW}{\mathcal{W}}
\newcommand{\XX}{\mathcal{X}}
\newcommand{\XXX}{\mathsf{X}}
\newcommand{\Z}{\mathbb{Z}}
\renewcommand{\P}{\mathbb{P}}
\newcommand{\Ext}{\operatorname{Ext}\nolimits}
\newcommand{\Hom}{\operatorname{Hom}\nolimits}
\newcommand{\End}{\operatorname{End}\nolimits}
\newcommand{\op}{\operatorname{op}\nolimits}
\newcommand{\Image}{\operatorname{Im}\nolimits}
\newcommand{\Cokernel}{\operatorname{Coker}\nolimits}
\newcommand{\Ab}{\mathcal{A}b}
\newcommand{\coker}{\Cokernel}
\newcommand{\im}{\Image}
\renewcommand{\u}{\underline}
\DeclareMathOperator{\moduleCategory}{\mathsf{mod}} \renewcommand{\mod}{\moduleCategory}
\DeclareMathOperator{\Mod}{\mathsf{Mod}}
\DeclareMathOperator{\proj}{\mathsf{proj}}
\DeclareMathOperator{\ind}{\mathsf{ind}}
\DeclareMathOperator{\Sub}{\mathsf{Sub}}
\DeclareMathOperator{\GP}{\mathsf{GP}}
\DeclareMathOperator{\add}{\mathsf{add}}
\DeclareMathOperator{\id}{\mathsf{id}}
\newcommand{\iso}{\cong}
\newcommand{\infl}{\rightarrowtail}
\newcommand{\defl}{\twoheadrightarrow}
\newcommand{\equi}{\simeq}
\renewcommand{\AA}{\mathcal{A}}
\newcommand{\BB}{\mathcal{B}}
\newcommand{\EE}{\mathcal{E}}
\numberwithin{equation}{section}
\newcommand{\inflr}{\ar@{ >->}[r]}
\newcommand{\infld}{\ar@{ >->}[d]}
\newcommand{\deflr}{\ar@{->>}[r]}
\newcommand{\defld}{\ar@{->>}[d]}
\begin{document}
\title[Classifying exact categories via Wakamatsu tilting]{Classifying exact categories via Wakamatsu tilting}

\author[H. Enomoto]{Haruhisa Enomoto}
\address{Graduate School of Mathematics, Nagoya University, Chikusa-ku, Nagoya. 464-8602, Japan}
\email{m16009t@math.nagoya-u.ac.jp}
\keywords{exact categories; Wakamatsu tilting modules; semi-dualizing modules; cotilting modules}
\subjclass[2010]{18E10, 16G10, 16D90}

\begin{abstract}
 Using the Morita-type embedding, we show that any exact category with enough projectives has a realization as a (pre)resolving subcategory of a module category. When the exact category has enough injectives, the image of the embedding can be described in terms of Wakamatsu tilting (=semi-dualizing) subcategories. If moreover the exact category has higher kernels, then its image coincides with the category naturally associated with a cotilting subcategory up to summands.
 We apply these results to the representation theory of artin algebras. In particular, we show that the ideal quotient of a module category by a functorially finite subcategory closed under submodules is a torsionfree class of some module category.
\end{abstract}

\maketitle

\tableofcontents

\section{Introduction}
Since Quillen introduced exact categories in \cite{qu}, many branches of mathematics have made use of this concept.
One of the most important classes of exact categories is given by a cotilting module $U$ over a ring as the associated $\Ext$-orthogonal category $^\perp U$, which forms an exact category with enough projectives and injectives.
This class of exact categories is fundamental in the representation theory of algebras, such as the tilting theory in derived categories as well as Cohen-Macaulay representations.
In this paper, we give characterizations of such kinds of exact categories as $^\perp U$ among all exact categories by investigating the relationship between the following.
\begin{enumerate}
\item Categorical properties of exact categories, e.g. enough projectives, enough injectives, Frobenius, idempotent complete, having (higher) kernels, abelian, $\cdots$.
\item Representation theoretic realizations of exact categories, e.g. $\mod\Lambda$ for a ring $\Lambda$, $\Ext$-orthogonal category $^\perp U$ for a cotilting module $U$, the exact category $\XXX_W$ associated with a Wakamatsu tilting module $W$, their resolving subcategories, $\cdots$.
\end{enumerate}

As a consequence, we deduce several known results including \cite{chen,kal,kal2}.
Our approach is based on the Morita-type embedding: For a skeletally small exact category $\EE$ with enough projectives, the category $\PP$ of projective objects in $\EE$ gives a fully faithful exact functor (Proposition \ref{ff}) 
\[
\P: \EE \to \Mod\PP, \hspace{5mm} X \mapsto \EE(-,X)|_\PP.
\]
This gives a realization of $\EE$ as a subcategory of the module category.

First we apply this functor $\P$ to show the following basic observation, where the category $\mod \CC$ consists of finitely presented $\CC$-modules in a stronger sense, see Definition \ref{modc}.

\begin{propositioni}(=Proposition \ref{preresol})\label{propa}
Let $\EE$ be a skeletally small exact category. The following are equivalent.
\begin{enumerate}
\item $\EE$ is idempotent complete and has enough projectives.
\item There exists a skeletally small additive category $\CC$ such that $\EE$ is exact equivalent to some resolving subcategory of $\mod \CC$.
\end{enumerate}
\end{propositioni}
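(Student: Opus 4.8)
The plan is to prove both implications, with $(2)\Rightarrow(1)$ a short formal verification and $(1)\Rightarrow(2)$ carrying the real content. For $(2)\Rightarrow(1)$: let $\XX\subseteq\mod\CC$ be a resolving subcategory with the exact structure inherited from the abelian category $\Mod\CC$. Since $\XX$ is closed under direct summands inside the idempotent complete category $\mod\CC$, it is idempotent complete. Since $\XX$ contains $\proj\CC$ and is closed under kernels of epimorphisms, for each $X\in\XX$ an epimorphism $P\twoheadrightarrow X$ from a finitely generated projective $\mod\CC$-module has its kernel in $\XX$, so this is a deflation in $\XX$ with projective source; as objects of $\proj\CC$ remain projective in the extension-closed subcategory $\XX$, this shows $\XX$ has enough projectives. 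An exact equivalence transports both properties, so $(1)$ holds.

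For $(1)\Rightarrow(2)$ I would take $\CC:=\PP$, the category of projective objects of $\EE$, which is skeletally small, additive, and --- since $\EE$ is idempotent complete --- itself idempotent complete, hence also weakly idempotent complete. Consider the fully faithful exact functor $\P\colon\EE\to\Mod\PP$ of Proposition \ref{ff}; it carries $P\in\PP$ to the representable, hence projective, $\PP$-module $\PP(-,P)$. I claim the (replete) essential image $\EE'$ of $\P$ is a resolving subcategory of $\mod\PP$ and that $\P$ is an exact equivalence of $\EE$ with $\EE'$. That $\EE'\subseteq\mod\PP$: iterating ``enough projectives'' in $\EE$ produces for each $X$ deflations $P_i\twoheadrightarrow\Omega^iX$ with $P_i\in\PP$, and applying the exact functor $\P$ and splicing gives a resolution $\cdots\to\PP(-,P_1)\to\PP(-,P_0)\to\P X\to0$ of $\P X$ by representables. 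That $\EE'$ is closed under direct summands: a summand of $\P X$ is cut out by an idempotent of $\End_{\mod\PP}(\P X)\cong\End_\EE(X)$, which splits in $\EE$, and applying $\P$ to the splitting exhibits the summand as an object of $\EE'$; in particular every finitely generated projective $\PP$-module, being a summand of some $\PP(-,P)=\P P$ with $P\in\PP$, lies in $\EE'$, so $\proj\PP\subseteq\EE'$.

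It remains to verify that $\EE'$ is closed under extensions and under kernels of epimorphisms in $\mod\PP$ and that $\P$ reflects conflations. Given an extension $0\to\P X\to M\to\P Y\to0$ in $\mod\PP$, choose a deflation $p\colon P\twoheadrightarrow Y$ in $\EE$ with $P\in\PP$ and put $\Omega:=\ker p$; lifting $\P p$ along $M\twoheadrightarrow\P Y$ (possible since $\P P$ is projective) produces an epimorphism $\P X\oplus\P P\twoheadrightarrow M$ whose kernel, by full faithfulness of $\P$, is of the form $\P g$ for a morphism $g\colon\Omega\to X\oplus P$ whose $P$-component is the inflation $\Omega\rightarrowtail P$; hence $g$ is an inflation in $\EE$ and $M\cong\P(\coker g)\in\EE'$. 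For kernels of epimorphisms, write a given epimorphism $\P X\to\P Y$ in $\mod\PP$ as $\P f$ with $f\colon X\to Y$ and form $(f\ p)\colon X\oplus P\twoheadrightarrow Y$; this is a deflation in $\EE$ because $p$ is and $\EE$ is weakly idempotent complete, and if $Z$ denotes its kernel then applying $\P$ and chasing in the abelian category $\Mod\PP$ yields $\P Z\cong K\oplus\P P$ where $K=\ker(\P f)$, so $K\in\EE'$ by closure under direct summands. Finally, if $X\xrightarrow{a}Y\xrightarrow{b}Z$ in $\EE$ has $\P$-image a short exact sequence, choose a deflation $P\twoheadrightarrow Z$ with $P\in\PP$; lifting it along $\P b$ furnishes $s$ with $bs$ a deflation, hence $b$ is a deflation (again by weak idempotent completeness), and comparing its kernel with $a$ through full faithfulness shows the original sequence is a conflation. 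Altogether $\EE'$ is a resolving subcategory of $\mod\PP$ and, being fully faithful, exact, essentially surjective onto $\EE'$, and conflation-reflecting, $\P$ is an exact equivalence of $\EE$ with $\EE'$.

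I expect the main obstacle to be precisely this last group of steps --- realizing the ambient short exact sequences of $\mod\PP$ as images of conflations in $\EE$. The delicate points are selecting the correct pushout (respectively, kernel) conflation in $\EE$, identifying the connecting morphism with a cokernel (respectively, kernel) of an honest morphism of $\EE$ via full faithfulness, and then stripping off the spurious representable summand $\P P$ using closure under direct summands; this same circle of ideas, together with the implication ``$bs$ a deflation implies $b$ a deflation'' (valid since $\EE$, being idempotent complete, is weakly idempotent complete), is what forces $\P$ to reflect conflations and thereby upgrades the categorical equivalence $\EE\equi\EE'$ to an exact one.
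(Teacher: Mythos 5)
Your proof is correct and follows essentially the same route as the paper: the Morita embedding $\P\colon\EE\to\mod\PP$, verification that its image is a resolving subcategory, and the lifting argument (via projectivity of $\P P$ and weak idempotent completeness) showing that $\P$ reflects conflations. The only cosmetic difference is that the paper first establishes the weaker ``preresolving'' property and then upgrades it to resolving via closure under summands (Lemma \ref{resol}), whereas you verify closure under extensions and under kernels of epimorphisms directly, by pushout/pullback computations and summand-stripping that amount to the same thing.
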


When $\EE$ has enough projectives and injectives, we show that the image of all injective objects under the above embedding $\P$ forms a special subcategory, which we call a \emph{Wakamatsu tilting subcategory}. This is a categorical analogue of a Wakamatsu tilting module introduced in \cite{wa} as a common generalization of a tilting module and a cotilting module. It is also known as a \emph{semi-dualizing module} \cite{ch,aty}, which is a certain analogue of a dualizing complex of Grothendieck \cite{residue}.
Any Wakamatsu tilting subcategory $\WW$ of a module category $\mod\CC$ gives rise to an exact subcategory $\XXX_\WW$ of $\mod\CC$ which has enough projectives and injectives, see Proposition \ref{xxww}. The category $\XXX_\WW$ for the special case $\WW = \proj \CC$ is nothing but the category $\GP \CC$ of Gorenstein projective $\CC$-modules (see Definition \ref{gpc}).

Using these concepts, one can embed exact categories with enough projectives and injectives as follows.

\begin{theoremi}(=Theorem \ref{pe})\label{theoremb}
Let $\EE$ be a skeletally small exact category. The following are equivalent.
\begin{enumerate}
\item $\EE$ is idempotent complete and has enough projectives and injectives.
\item There exist a skeletally small additive category $\CC$ and a Wakamatsu tilting subcategory $\WW$ of $\mod\CC$ such that $\EE$ is exact equivalent to some resolving-coresolving subcategory of $\XXX_\WW$.
\end{enumerate}
\end{theoremi}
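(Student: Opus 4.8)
The plan is to prove the two implications separately, with essentially all the work in $(1)\Rightarrow(2)$.

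For $(2)\Rightarrow(1)$ I would argue that the three properties descend to, and then transport along, a resolving--coresolving subcategory. By Proposition \ref{xxww} the category $\XXX_\WW$ is idempotent complete with enough projectives and injectives. A resolving--coresolving subcategory $\YY$ of such an exact category is closed under direct summands (hence idempotent complete), and, being resolving, it contains the projectives of $\XXX_\WW$ and is closed under extensions and kernels of deflations --- exactly what is needed to cover an object by an admissible epic from a projective --- so it has enough projectives; dually, being coresolving, it has enough injectives. Transporting along the exact equivalence then gives $(1)$.

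For $(1)\Rightarrow(2)$, first I would set $\CC:=\proj\EE$, a skeletally small additive category. By Proposition \ref{ff} the assignment $X\mapsto\EE(-,X)|_\CC$ is a fully faithful exact functor $\P\colon\EE\to\Mod\CC$, and by (the proof of) Proposition \ref{propa} it restricts to an exact equivalence $\EE\equi\XX$ onto a resolving subcategory $\XX:=\im\P$ of $\mod\CC$; in particular $\XX$ is closed under direct summands and extensions, and $\Ext$ in $\mod\CC$ between objects of $\XX$ agrees with $\Ext$ computed in $\XX$, hence in $\EE$. Next I would put $\WW:=\add\P(\inj\EE)\subseteq\mod\CC$ and check it is a Wakamatsu tilting subcategory: self-orthogonality $\Ext^{>0}_{\mod\CC}(\WW,\WW)\iso\Ext^{>0}_\EE(\inj\EE,\inj\EE)=0$ is immediate, and for each $P\in\CC=\proj\EE$, applying the exact functor $\P$ to an injective coresolution $0\to P\to I^0\to I^1\to\cdots$ of $P$ in $\EE$ and splicing the images of the resulting conflations produces a $\WW$-coresolution of $\P P=\CC(-,P)$; this coresolution stays exact under $\Hom_{\mod\CC}(-,\WW)$ because $\P$ is fully faithful (so $\Hom_{\mod\CC}(\P(-),\P J)\iso\Hom_\EE(-,J)$ for $J\in\inj\EE$) and the $\EE$-coresolution is $\Hom_\EE(-,J)$-exact by injectivity of $J$. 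This is the one place the hypothesis ``enough injectives'' is used. The same two computations applied to an arbitrary $X\in\EE$ show $\Ext^{>0}_{\mod\CC}(\P X,\WW)=0$ and that $\P X$ has a $\Hom_{\mod\CC}(-,\WW)$-exact $\WW$-coresolution, i.e.\ $\XX\subseteq\XXX_\WW$.

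It then remains to see that $\XX$ is resolving--coresolving in $\XXX_\WW$. For the resolving part I would invoke Proposition \ref{xxww} for the facts that the projectives of $\XXX_\WW$ lie in $\proj\CC\subseteq\XX$ and the injectives of $\XXX_\WW$ form $\WW\subseteq\XX$, and note that conflations in $\XXX_\WW$ are exactly the short exact sequences of $\mod\CC$ with all terms in $\XXX_\WW$, so that $\XX$ being resolving and extension-closed in $\mod\CC$ makes it resolving and extension-closed in $\XXX_\WW$ as well. The hard part --- the main obstacle --- is closure of $\XX$ under cokernels of inflations inside $\XXX_\WW$: given a conflation $0\to V_1\to V_2\to V_3\to 0$ in $\XXX_\WW$ with $V_1,V_2\in\XX$, one must produce $V_3\in\XX$, and the naive attempt (invert $\P$ and use that $\EE$ has enough injectives) stumbles because a monic morphism of $\EE$ need not be an inflation. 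My plan to get around this: write $V_1=\P A_1$, use enough injectives in $\EE$ to obtain a conflation $0\to A_1\to J\to K\to 0$ with $J\in\inj\EE$, apply $\P$ to get $0\to V_1\to\P J\to\P K\to 0$ with $\P J\in\WW$ and $\P K\in\XX$, and form the pushout $Q$ of $V_1\infl V_2$ along $V_1\to\P J$. This yields conflations $0\to V_2\to Q\to\P K\to 0$ (whence $Q\in\XX$, since $\XX$ is extension-closed in $\mod\CC$) and $0\to\P J\to Q\to V_3\to 0$; the latter is a conflation in $\XXX_\WW$ with $\P J\in\WW=\inj\XXX_\WW$ injective, hence splits, so $V_3$ is a direct summand of $Q\in\XX$, and $\XX$ is summand-closed. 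Finally, the exact structure $\XX$ receives from $\XXX_\WW$ coincides with the one transported from $\EE$ (both are detected by short-exactness in $\mod\CC$), so $\P$ exhibits $\EE$ as exact equivalent to the resolving--coresolving subcategory $\XX$ of $\XXX_\WW$. Apart from this pushout argument, everything is bookkeeping that pushes the categorical hypotheses through $\P$ using that resolving subcategories compute $\Ext$.
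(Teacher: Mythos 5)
Your proposal is correct and follows essentially the same route as the paper: the Morita embedding $\P$ onto a resolving subcategory of $\mod\PP$, taking $\WW=\P(\II)$, verifying it is Wakamatsu tilting, and showing the image lands in $\XXX_\WW$ as a resolving--coresolving subcategory. The only cosmetic difference is that your pushout argument for closure under cokernels of inflations is exactly the content of (the dual of) Lemma \ref{resol}, which the paper invokes instead of inlining.
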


As an application, we characterize when a given exact category is exact equivalent to one of the three important cases $\mod\CC$, $\XXX_\WW$ or $\GP \CC$ for some additive category $\CC$ and some Wakamatsu tilting subcategory $\WW$, see Theorem \ref{rec1}, \ref{rec2} and \ref{rec3} respectively.

Next we consider a special case when $\WW$ is a cotilting subcategory of $\mod\CC$. The category $\XXX_\WW$ coincides with the $\Ext$-orthogonal subcategory $^\perp\WW$ in this situation, and we give a simple characterization when $\EE$ and $\XXX_\WW$ are exact equivalent.
We extend the notion of \emph{$n$-kernels} \cite{jasso} to $n\geq -1$ (see Definition \ref{nkernel}), and prove the following main result.

\begin{theoremi}(=Theorem \ref{cor1})\label{theoremc}
Let $\EE$ be a skeletally small exact category and $n$ be an integer $n \geq 0$. The following are equivalent.
\begin{enumerate}
 \item $\EE$ is idempotent complete, has enough projectives and injectives and has $(n-1)$-kernels.
 \item There exist a skeletally small additive category $\CC$ with weak kernels and an $n$-cotilting subcategory $\WW$ of $\mod\CC$ such that $\EE$ is exact equivalent to $\XXX_\WW$.
\end{enumerate}
\end{theoremi}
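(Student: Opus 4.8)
The plan is to prove Theorem~\ref{cor1} as a refinement of Theorem~\ref{pe}: the extra hypothesis of $(n-1)$-kernels should be exactly what upgrades the Wakamatsu tilting subcategory produced there to an $n$-cotilting one, forces $\CC$ to have weak kernels, and makes the resolving--coresolving subcategory fill out all of $\XXX_\WW$; conversely one checks that these features reproduce $(n-1)$-kernels.

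For $(1)\Rightarrow(2)$ I would start from Theorem~\ref{pe}, obtaining $\CC:=\proj\EE$ (idempotent complete since $\EE$ is), the Wakamatsu tilting subcategory $\WW:=\P(\inj\EE)$ of $\mod\CC$, and the fully faithful exact functor $\P\colon\EE\to\XXX_\WW$ (Proposition~\ref{ff}) whose essential image $\FF$ is a resolving--coresolving subcategory of $\XXX_\WW$ containing $\proj\XXX_\WW=\CC$ and $\inj\XXX_\WW=\WW$; note $\FF\subseteq\XXX_\WW\subseteq{}^\perp\WW$. The argument then has three parts. (i) \emph{$\CC$ has weak kernels:} given a morphism of $\CC$, lift a truncation of an $(n-1)$-kernel of the corresponding morphism of $\EE$ along a deflation from a projective object, and use projectivity to check that the lift is a weak kernel in $\CC$; hence $\mod\CC$ is abelian. (ii) \emph{$\WW$ is $n$-cotilting:} every $M\in\mod\CC$ is a cokernel of a morphism between objects of $\CC$ (Definition~\ref{modc}); applying the fully faithful $\P$ to an $(n-1)$-kernel in $\EE$ of the corresponding morphism yields an exact sequence of $\CC$-modules exhibiting an $\FF$-resolution $0\to F_n\to\cdots\to F_0\to M\to 0$, and since every $F_i\in\FF\subseteq{}^\perp\WW$ is $\Ext^{\ge1}_{\mod\CC}(-,\WW)$-acyclic, dimension shifting gives $\Ext^{n+1}_{\mod\CC}(M,\WW)=0$; thus $\id_{\mod\CC}\WW\le n$, so $\WW$ is $n$-cotilting and $\XXX_\WW={}^\perp\WW$. (iii) \emph{$\FF=\XXX_\WW$:} by (ii) every $X\in\XXX_\WW={}^\perp\WW$ has a finite $\FF$-resolution, each of whose syzygies again lies in ${}^\perp\WW$ because ${}^\perp\WW$ is closed under kernels of epimorphisms; inducting on the length of the resolution and using that $\FF$ is coresolving in $\XXX_\WW$ (hence closed under cokernels of inflations) forces $X\in\FF$. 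Combining (i)--(iii), $\EE$ is exact equivalent to $\XXX_\WW$ with $\WW$ an $n$-cotilting subcategory of $\mod\CC$ and $\CC$ having weak kernels.

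For $(2)\Rightarrow(1)$, the category $\XXX_\WW$ is idempotent complete and has enough projectives and injectives by Proposition~\ref{xxww}, so only the $(n-1)$-kernels remain. Here I would use that $\WW$ being $n$-cotilting gives $\XXX_\WW={}^\perp\WW$ and $\id_{\mod\CC}\WW\le n$, and that ${}^\perp\WW$ is a resolving, contravariantly finite subcategory of the abelian category $\mod\CC$, so that kernels of morphisms of $\XXX_\WW$ can be computed by right ${}^\perp\WW$-approximations of their kernels taken in $\mod\CC$; the bound $\id_{\mod\CC}\WW\le n$ is precisely what makes the iterated construction close up after $n-1$ steps into an $(n-1)$-kernel (the extreme case $n=0$ being the statement that $\XXX_\WW=\mod\CC$ is abelian).

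I expect steps (ii) and (iii) of $(1)\Rightarrow(2)$ to be the main obstacle, and the crux to be the implication ``$\EE$ has $(n-1)$-kernels $\Longrightarrow$ every object of $\mod\CC$ has an $\FF$-resolution of length $\le n$''. The delicate point is to verify that transporting an $(n-1)$-kernel of $\EE$ through the fully faithful functor $\P$ yields a genuinely exact sequence of $\CC$-modules whose leftmost terms lie in $\FF$, using that $\P$ identifies $\EE(C,-)$ with evaluation at $C$ for $C\in\proj\EE$; granting this, the dimension shift and the induction in (iii) are formal. The low-degree cases $n=0,1$ of the extended notion of $(n-1)$-kernels in Definition~\ref{nkernel} should be checked by hand.
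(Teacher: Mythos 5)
Your overall strategy is the one the paper uses: for $(1)\Rightarrow(2)$ it feeds the $(n-1)$-kernels of morphisms between projectives through the Morita embedding to produce length-$n$ resolutions of every object of $\mod\CC$ by objects of $\P(\EE)$ (this is Proposition \ref{kercotilt}, reducing to Proposition \ref{xispe}), and for $(2)\Rightarrow(1)$ it iterates right $\XXX_\WW$-approximations of kernels taken in the abelian category $\mod\CC$, with $\id\WW\le n$ closing the process (Proposition \ref{cotiltker}). Your step (iii) differs mildly from the paper: where you run a direct induction along the resolution of $X\in\XXX_\WW$, using that the syzygies stay in ${}^\perp\WW$ and that $\P(\EE)$ is coresolving in $\XXX_\WW$, the paper instead invokes the Auslander--Buchweitz machinery (Corollary \ref{icchi}: two preresolving subcategories with the same injectives and with $\widehat{(-)}=\mod\CC$ agree up to summands). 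Both work; yours is more hands-on, the paper's packages the argument into a reusable lemma.

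The one place where your write-up has a real gap is the sentence in step (ii): ``thus $\id_{\mod\CC}\WW\le n$, so $\WW$ is $n$-cotilting and $\XXX_\WW={}^\perp\WW$.'' Finite injective dimension together with self-orthogonality does \emph{not} by itself yield the third condition ${}^\perp\WW=\XXX_\WW$ in the definition of an $n$-cotilting subcategory; that equality is an additional statement requiring proof. What does yield it is precisely the resolutions you have just built: they show $\widehat{\XXX_\WW}^n=\mod\CC$, and then Proposition \ref{cotiltingcond} ((2)$\Rightarrow$(1)) gives all three cotilting conditions at once. Its proof of ${}^\perp\WW=\XXX_\WW$ is an induction of exactly the same shape as your step (iii), but run inside ${}^\perp\WW$ rather than inside $\XXX_\WW$, using that $\XXX_\WW$ is thick in ${}^\perp\WW$ (Proposition \ref{xxww}(1)) to push membership in $\XXX_\WW$ rightwards along the resolution of an arbitrary $M\in{}^\perp\WW$. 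So the ingredient is already in your hands; you just need to cite or reproduce that argument instead of deducing the equality from $\id\WW\le n$ alone. With that repaired, the remaining details (weak kernels for $\PP$, the $n=0,1$ cases of Definition \ref{nkernel}, and the approximation argument for $(2)\Rightarrow(1)$) go through as you describe.
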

This theorem provides us with a concrete method to prove that a given exact category is equivalent to those associated with a cotilting module. Using this, we prove some results on artin algebras over a commutative artinian ring $R$.

\begin{theoremi}(=Theorem \ref{main1})\label{theoremd}
Let $\Lambda$ be an artin $R$-algebra and $M \in \mod\Lambda$, and put $\CC:=\Sub M$. Consider the quotient category $\EE = (\mod\Lambda)/[\CC]$. Then the following hold.
\begin{enumerate}
\item $\EE$ admits an exact structure in which $\EE$ has enough projectives and injectives and has $0$-kernels.
\item Suppose that $\ind (\tau^- \CC) \setminus \ind \CC$ is a finite set. Then there exist an artin $R$-algebra $\Gamma$ and a $1$-cotilting module $U\in\mod\Gamma$ such that $\EE$ is exact equivalent to a torsionfree class $^\perp U \subset \mod\Gamma$.
\end{enumerate}
\end{theoremi}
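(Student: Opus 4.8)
The plan is to deduce both parts from Theorem~\ref{cor1} in the case $n=1$, after first installing a suitable exact structure on $\EE$ and checking (1).

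\textbf{(1).} I would equip $\EE=(\mod\Lambda)/[\CC]$ with the exact structure whose conflations are the images of those short exact sequences $0\to A\to B\to C\to 0$ in $\mod\Lambda$ that remain exact after applying both $\Hom_\Lambda(\CC,-)$ and $\Hom_\Lambda(-,\CC)$, and verify Quillen's axioms for this class. The key point is that the class is stable not only under pullback and pushout in $\mod\Lambda$ but under the corresponding universal constructions \emph{inside} $\EE$; this is exactly where $\CC=\Sub M$ being closed under submodules is used, since it lets one replace any factorization through $\CC$ by one through an epimorphic image that still lies in $\CC$. Then $\EE$ is Krull--Schmidt (an ideal quotient of the Krull--Schmidt category $\mod\Lambda$), hence idempotent complete. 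To get enough projectives I would first locate some: the image of every projective $\Lambda$-module is projective in $\EE$, and, using that the almost split sequence ending at $\tau^{-1}C$ (for $C\in\ind\CC$ non-injective) has its submodule term in $\CC$, one checks via the description of conflations that $\overline{\tau^{-1}C}$ is projective in $\EE$ as well; combining a projective cover in $\mod\Lambda$ with a left $\CC$-approximation of the syzygy then produces a conflation onto any object from such projectives, and dually for injectives. Finally, for $0$-kernels I would lift $\overline f\colon\overline X\to\overline Y$ to $f$ in $\mod\Lambda$ and verify, again absorbing maps through $\CC$ by closure under submodules, that the image of $\ker f\hookrightarrow X$ is a $0$-kernel of $\overline f$.

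\textbf{(2).} By (1), $\EE$ is idempotent complete, has enough projectives and injectives and has $0$-kernels, so Theorem~\ref{cor1} with $n=1$ gives a skeletally small additive category $\CC'$ with weak kernels and a $1$-cotilting subcategory $\WW$ of $\mod\CC'$ with $\EE\equi\XXX_\WW$; by the construction behind that theorem one may take $\CC'=\proj\EE$ and $\WW$ the image of $\inj\EE$ under the Morita-type embedding $\P$ of Proposition~\ref{ff}. Now I would invoke the hypothesis: by the description above, the indecomposables of $\proj\EE$ are, up to the zero object, the images of the finitely many indecomposable projective $\Lambda$-modules and the $\overline{\tau^{-1}C}$ with $C\in\ind\CC$ non-injective, the latter contributing precisely $\ind(\tau^-\CC)\setminus\ind\CC$, a finite set. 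Hence $\proj\EE=\add\overline N$ for a finitely generated $N\in\mod\Lambda$, so $\Gamma:=\End_\EE(\overline N)^{\op}$ is an artin $R$-algebra and $\mod\CC'=\mod\Gamma$. An analogous analysis of $\inj\EE$ shows it too is of finite type, so $\WW=\add U$ for some $U\in\mod\Gamma$, which is then a $1$-cotilting $\Gamma$-module. Since $\WW$ is cotilting, $\XXX_\WW={}^\perp\WW={}^\perp U$ (as recalled in the introduction), and because $\id_\Gamma U\le 1$ the subcategory ${}^\perp U$ is closed under submodules and extensions, i.e.\ a torsionfree class of $\mod\Gamma$; thus $\EE\equi{}^\perp U$.

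The step I expect to be the main obstacle is everything in (1): fixing the exact structure and then using almost split theory together with $\CC$-approximations to pin down exactly which objects become (relatively) projective, resp.\ injective, in $\EE$, to show there are enough of them, and to establish the $0$-kernels property. A further subtlety in (2) is the finiteness of $\inj\EE$: unlike that of $\proj\EE$ it is not literally the stated hypothesis, so one must either deduce it from the hypothesis or instead show directly that $\WW$ is functorially finite in $\mod\Gamma$ (which already forces $U$ to exist as a cotilting module).
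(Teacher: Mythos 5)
Your overall strategy matches the paper's: install the $\CC$-relative exact structure, descend it to the quotient, identify the projectives and injectives via $\tau^{\pm}$, and feed the result into Theorem~\ref{cor1} (in the form of Corollary~\ref{artincase}) with $n=1$. However, there is a genuine gap in your treatment of $0$-kernels, which is the technical heart of part (1). You propose to lift $\overline{f}\colon\overline{X}\to\overline{Y}$ to $f$ in $\mod\Lambda$ and use the factorization through $\ker f$ and $\im f$. The monomorphism half is fine: since $\CC$ is closed under submodules, $\overline{\im f}\to\overline{Y}$ is monic in $\EE$ (the dual of Proposition~\ref{mono}). But the deflation half fails as stated: the short exact sequence $0\to\ker f\to X\to\im f\to 0$ is in general neither a $(\CC,-)$-conflation nor a $(-,\CC)$-conflation, so $\overline{X}\to\overline{\im f}$ need not be a deflation in the relative exact structure, and ``absorbing maps through $\CC$'' does not repair this. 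The paper's Proposition~\ref{proof2} has to produce a genuinely different intermediate object: after reducing to $f$ surjective, one pulls back along a right $\CC$-approximation $C_Y\to Y$ to get a $(\CC,-)$-conflation $0\to E\to X\oplus C_Y\to Y\to 0$, then pushes out along a left $\CC$-approximation $E\to C^E$ to force $(-,\CC)$-exactness, obtaining a $\CC$-conflation $0\to E\to C^E\oplus X\oplus C_Y\to F\to 0$; the factorization of $\overline{f}$ is then $\overline{X}\twoheadrightarrow\overline{F}\to\overline{Y}$, and one must still check separately that $\overline{F}\to\overline{Y}$ is monic (using again that $C_Y\to Y$ is an approximation). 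Without this construction your argument for $0$-kernels does not go through.

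Two smaller points. First, for enough projectives and injectives the paper does not build approximations by hand from almost split sequences; it observes that $\CC$-exactness equals $(\add\{\CC,\tau^-\CC\},-)$-exactness (by the AR-duality $(-,X)$-exact $\Leftrightarrow$ $(\tau^-X,-)$-exact) and quotes Auslander--Solberg's criterion that this relative structure has enough projectives and injectives precisely when $\add\{\CC,\tau^-\CC\}$ is functorially finite; your route is the same mechanism unpacked, and should work, but the descent to the quotient still requires Proposition~\ref{di} and the monomorphism/epimorphism statements of Proposition~\ref{mono}. Second, you correctly flag that finiteness of $\II$ is not literally the hypothesis; the paper settles this by the explicit description $\II=\pi(\add\{D\Lambda,\tau\CC\})$ together with the observation that $\PP$ is of finite type if and only if $\II$ is, so this needs to be proved rather than left as an alternative. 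Your reduction of (2) to Corollary~\ref{artincase} once both $\PP$ and $\II$ are of finite type, and the identification of ${}^\perp U$ as a torsionfree class when $\id U\le 1$, are correct and agree with the paper.
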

A key ingredient of the proof of Theorem \ref{theoremd} is the theory of relative homological algebra due to Auslander-Solberg \cite{as1,as2,as3} and its relation to exact categories via \cite{drss}. 

As another application of Theorem \ref{theoremc}, we deduce the following result by Auslander-Solberg.
\begin{theoremi}(=Theorem \ref{main2})
Let $\Lambda$ be an artin $R$-algebra and $M\in\mod\Lambda$. Set $G := \Lambda \oplus M$, $C := D \Lambda \oplus \tau M$, $\Gamma := \End_\Lambda(G)$ and $U := \Hom_\Lambda(G, C) \in \mod\Gamma$. Then the following hold.
\begin{enumerate}
\item $U$ is a cotilting $\Gamma$-module with $\id U =2$ or $0$.
\item $\Hom_\Lambda(G,-):\mod\Lambda \to \mod\Gamma$ induces an equivalence $\mod\Lambda \equi {}^\perp U$.
\item $\mod \Lambda$ admits an exact structure such that projective objects are precisely objects in $\add G$ and the equivalence $\mod\Lambda \equi {}^\perp U$ is an exact equivalence.
\item $\End_\Lambda(G)$ and $\End_\Lambda(C)$ are derived equivalent.
\end{enumerate}
\end{theoremi}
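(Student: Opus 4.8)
The plan is to realize $\mod\Lambda$ as $\XXX_\WW$ for a cotilting subcategory by equipping it with a relative exact structure and then invoking Theorem \ref{theoremc}. First I would put on $\mod\Lambda$ the relative exact structure $\EE$ whose conflations are the short exact sequences of $\Lambda$-modules that remain exact after applying $\Hom_\Lambda(G,-)$, i.e.\ the exact structure associated with the additive generator $G=\Lambda\oplus M$. By the Auslander--Solberg theory of relative homological algebra \cite{as1,as2,as3}, reformulated for exact categories via \cite{drss}, this is indeed an exact structure with enough projectives and injectives, with $\proj\EE=\add G$ and, using Auslander--Reiten duality, with $\inj\EE=\add C$ for $C=D\Lambda\oplus\tau M$. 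Since $\mod\Lambda$ is Krull--Schmidt, $\EE$ is idempotent complete; and $\EE$ has $1$-kernels, because for $f\colon X\to Y$ in $\mod\Lambda$ the inclusion $\ker f\hookrightarrow X$ of the abelian kernel works: $\Hom_\Lambda(G,-)$ is left exact, so $\EE(P,-)$ turns $\ker f\to X\to Y$ into an exact sequence for every $P\in\add G$. Thus $\EE$ satisfies condition (1) of Theorem \ref{theoremc} with $n=2$.

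Theorem \ref{theoremc} (together with the construction underlying Theorem \ref{theoremb}) then yields an exact equivalence $\P\colon\EE\xrightarrow{\ \sim\ }\XXX_\WW$, where $\P(X)=\EE(-,X)|_{\proj\EE}$, $\CC=\proj\EE=\add G$ has weak kernels, and $\WW=\P(\inj\EE)$ is a $2$-cotilting subcategory of $\mod\CC$. Now I would identify the data: since $\add G$ has the additive generator $G$ we have $\mod\CC\simeq\mod\Gamma$ with $\Gamma=\End_\Lambda(G)$, under which $\P(X)$ becomes the $\Gamma$-module $\Hom_\Lambda(G,X)$ and $\WW=\add\Hom_\Lambda(G,C)=\add U$. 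A cotilting subcategory with an additive generator is the additive closure of a single cotilting module, so $U$ is a $2$-cotilting $\Gamma$-module; in particular $\id_\Gamma U\le 2$, $\Ext^{>0}_\Gamma(U,U)=0$, and the injective cogenerator $D\Gamma$ has an $\add U$-coresolution of length at most $2$. Since $\WW$ is cotilting, $\XXX_\WW={}^\perp\WW={}^\perp U$, so $\P$ is an exact equivalence $(\mod\Lambda,\EE)\xrightarrow{\ \sim\ }{}^\perp U$ which on objects is $X\mapsto\Hom_\Lambda(G,X)$; as the exact structure $\EE$ does not alter morphism sets, $\P$ coincides with $\Hom_\Lambda(G,-)$ as a plain functor and is even fully faithful on all of $\mod\Lambda$ by Proposition \ref{ff}. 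This gives items (2) and (3), with $\proj\EE=\add G$ by construction.

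For item (4) I would invoke Happel's theorem: a cotilting module of finite injective dimension induces a derived equivalence $\mathrm{D}^{\mathrm b}(\mod\Gamma)\simeq\mathrm{D}^{\mathrm b}(\mod\End_\Gamma(U))$. It remains to identify $\End_\Gamma(U)$ with $\End_\Lambda(C)$, and here I would use that $\P\colon\EE\to\mod\Gamma$ is fully faithful (Proposition \ref{ff}) while $\End_\EE(C)=\End_\Lambda(C)$ (the exact structure does not change $\Hom$-sets): this gives $\End_\Lambda(C)=\End_\EE(C)\xrightarrow{\ \sim\ }\End_\Gamma(\P(C))=\End_\Gamma(U)$. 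Chaining with Happel's equivalence yields $\End_\Lambda(G)$ and $\End_\Lambda(C)$ derived equivalent.

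The remaining point, and the one I expect to be the main obstacle, is the precise dichotomy $\id_\Gamma U\in\{0,2\}$ in item (1): one must go beyond the bound $\id_\Gamma U\le 2$ coming from the $2$-cotilting structure and rule out $\id_\Gamma U=1$. I would approach this by building an explicit $\add U$-coresolution of $D\Gamma$ from a minimal projective presentation of $M$ and the Nakayama functor (so that $\Hom_\Lambda(G,-)$ applied to $0\to\tau M\to\nu P_1\to\nu P_0$ contributes the terms), and then analyzing when this coresolution has length strictly less than $2$. One expects this to happen exactly when $\add G=\proj\Lambda$, i.e.\ $M\in\add\Lambda$, in which case $G=\Lambda$, $\Gamma=\Lambda$, $C=D\Lambda$ and $U=D\Lambda$ is the injective cogenerator with $\id_\Gamma U=0$; otherwise $\id_\Gamma U=2$. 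Pinning down this coresolution and its length is the technical heart of the argument, whereas the two extreme computations are then immediate.
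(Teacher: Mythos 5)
Your strategy for items (2)--(4) is essentially the paper's: equip $\mod\Lambda$ with the relative exact structure $\EE_{(G,-)}$ whose conflations are the short exact sequences kept exact by $\Hom_\Lambda(G,-)$, invoke Auslander--Solberg/DRSS (Proposition \ref{classical}) to get the projective generator $G$ and injective cogenerator $C=D\Lambda\oplus\tau G$, note that this exact category has $1$-kernels because $\mod\Lambda$ has honest kernels, and apply Theorem \ref{cor1} (equivalently Corollary \ref{artincase}) with $n=2$; the identifications $\mod(\add G)\equi\mod\Gamma$, $\P\cong\Hom_\Lambda(G,-)$, $\WW=\add U$ and $\End_\Gamma(U)\cong\End_\Lambda(C)$ all go through as you describe, and your handling of (4) via the cotilting derived equivalence is, if anything, more explicit than the paper's one-line citation. (One small imprecision: $n$-kernels in Definition \ref{nkernel} require exactness of $\CC(-,-)$ against \emph{all} objects of the category, not just against $\add G$; since $\ker f\hookrightarrow X$ is an actual kernel in $\mod\Lambda$ this is harmless here.)

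The genuine gap is exactly where you flag it: ruling out $\id U=1$ in item (1). You obtain only $\id U\le 2$ and defer to an uncompleted computation with an explicit $\add U$-coresolution of $D\Gamma$ built from the Nakayama functor. The paper closes this in one line via Lemma \ref{below}: an exact structure on an abelian category that has $0$-kernels must coincide with the standard abelian one. Combined with the equivalence ``$\WW$ is $n$-cotilting if and only if $\XXX_\WW$ has $(n-1)$-kernels'' (Corollary \ref{cotiltwakamatsu}), which you already have in hand, the dichotomy is immediate: if $\id U\le 1$ then $U$ is $1$-cotilting, so ${}^\perp U$ and hence $\EE_{(G,-)}$ has $0$-kernels; by Lemma \ref{below} the relative structure is then the standard one on $\mod\Lambda$, which has $(-1)$-kernels, forcing $U$ to be $0$-cotilting, i.e.\ $\id U=0$ (and $\add G=\proj\Lambda$, confirming your guess that this degenerate case occurs only for projective $G$). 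You should replace the unfinished coresolution analysis by this argument; as written, item (1) is not proved.
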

This theorem is essentially contained in \cite[Proposition 3.26]{as2}, but we give a simple proof by using the modified exact structure on $\mod\Lambda$ and Theorem \ref{theoremc}.\\

Finally, let us give a brief description of the individual sections. 
In Section 2, we study exact categories with enough projectives. We prove Proposition \ref{propa} and give a characterization of exact categories of the form $\mod\CC$ for some additive category $\CC$.
In Section 3, we study exact categories with enough projectives and injectives. To this purpose, we introduce Wakamatsu tilting subcategories and prove some properties. Then we prove Theorem \ref{theoremb}, and give a characterization of exact categories coming from Wakamatsu tilting subcategories.
In Section 4, we introduce cotilting subcategories, and study their relationship to higher kernels.
In Section 5, we apply these results to the representation theory of artin algebras.
In Appendix A, we develop the analogue of Auslander-Buchweitz approximation theory in the context of exact categories, which we need in Section 4.
In Appendix B, we collect some results which enable us to construct new exact structures from a given one, which we use in Section 5.

\subsection{Notation and conventions}
Throughout this paper, \emph{we assume that all categories are skeletally small unless otherwise stated}, that is, equivalent to small categories. In addition, \emph{all subcategories are assumed to be full and closed under isomorphisms}. When we consider modules over rings or categories, we always mean right modules.
Let $\CC$ and $\CC'$ be categories, $\DD$ a subcategory of $\CC$ and $F:\CC \to \CC'$ a functor. We denote by $F(\DD)$ the essential image of $\DD$ under $F$, that is, the subcategory of $\CC'$ consisting of all objects isomorphic to the images of objects in $\CC$ under $F$. We simply call $F(\DD)$ the \emph{image} of $\DD$ under $F$. 

We assume that all functors between additive categories are additive.
For a subcategory $\DD$ of an additive category $\CC$, we write $\add\DD$ for the subcategory of $\CC$ consisting of all objects which are summands of finite direct sums of objects in $\DD$, that is, the smallest additive subcategory containing $\DD$ which is closed under summands.
We say that an additive category $\CC$ is \emph{of finite type} if there exist only finitely many indecomposable objects in $\CC$ up to isomorphism.

When we consider an exact(=extension-closed) subcategory $\CC$ of an exact category $\EE$, we always regard $\CC$ as the exact category with the exact structure inherited from those of $\EE$. 
As for exact categories, we use the terminologies \emph{inflations}, \emph{deflations} and \emph{conflations}. We denote by $\infl$ an inflation and by $\defl$ a deflation whenever we consider exact categories. 
We say that a functor $F:\EE \to \EE'$ between exact categories is an \emph{exact equivalence} if $F$ is an equivalence of categories, $F$ is exact and $F$ reflects exactness. This means that for a complex $A \to B \to C$ in $\EE$, it is a conflation in $\EE$ if and only if $F(A) \to F(B) \to F(C)$ is a conflation in $\EE'$.
For an exact category $\EE$, we denote by $\PP(\EE)$ (resp. $\II(\EE)$) the subcategory of $\EE$ consisting of all projective (resp. injective) objects. We say that \emph{$\EE$ has enough projectives $\PP$ (resp. enough injectives $\II$)} if $\EE$ has enough projective objects and $\PP = \PP(\EE)$ (resp. enough injective objects and $\II = \II(\EE)$).

We always denote by $R$ a commutative artinian ring. For an artin $R$-algebra $\Lambda$, we denote by $D:\mod\Lambda \to \mod\Lambda$ the standard Matlis duality and by $\tau$ and $\tau^-$ the Auslander-Reiten translations.

We refer the reader to \cite{ASS,ARS} for background on representation theory of algebras and to \cite{buhler,keller} for the basic concepts of exact categories.

\section{Exact categories with enough projectives}\label{2.1}
In this section, \emph{we always assume that $\EE$ is a skeletally small exact category with enough projectives $\PP =\PP(\EE)$.} We freely use basic properties of the bifunctor $\Ext^i_\EE(-,-)$ for $i\geq 0$, which can be defined by using projective resolutions. 

\subsection{Morita-type theorem}
We start with recalling the notion of modules over a category. For an additive category $\CC$, a \emph{right $\CC$-module} $X$ is a contravariant additive functor $X: \CC^{\op} \to \Ab$ from $\CC$ to the category of abelian groups $\Ab$. We denote by $\Mod\CC$ the (not skeletally small) category of right $\CC$-modules, and morphisms are natural transformations between them.
Then the category $\Mod \CC$ is an abelian category with enough projectives, and projective objects are precisely direct summands of (possibly infinite) direct sums of representable functors $\CC(-,C)$ for $C \in \CC$.

For a skeletally small exact category $\EE$ with enough projectives $\PP:=\PP(\EE)$, we define a functor
\begin{equation}\label{moritaembedding}
\P : \EE \to \Mod \PP, \hspace{5mm} X \mapsto \EE(-,X)|_\PP,
\end{equation}
which we call the \emph{Morita embedding} because of the following properties.
\begin{proposition}\label{ff}
Let $\EE$ be a skeletally small exact category with enough projectives $\PP$. Then the Morita embedding functor $\P:\EE \to \Mod \PP$ is fully faithful and exact. Moreover $\P$ preserves projectivity and all extension groups. 
\end{proposition}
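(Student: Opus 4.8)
The plan is to verify the three assertions---full faithfulness, exactness, and preservation of projectivity---essentially by unravelling the definition of $\P$ and invoking the Yoneda lemma together with the fact that $\EE$ has enough projectives. First I would record the key computational input: for a projective object $P \in \PP$, the representable functor $\PP(-,P)$ is exactly $\P(P)$, and by the Yoneda lemma $\Hom_{\Mod\PP}(\PP(-,P), \P(X)) \cong \P(X)(P) = \EE(P,X)$, naturally in both $P$ and $X$. This observation will be used repeatedly.

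For full faithfulness, I would argue as follows. Given $X, Y \in \EE$, choose a deflation $P_1 \defl P_0 \defl X \to 0$ built from projectives (using enough projectives twice to obtain a two-term projective presentation $P_1 \to P_0 \to X$ whose relevant sequence $\PP(-,P_1) \to \PP(-,P_0) \to \P(X) \to 0$ is exact in $\Mod\PP$, since deflations onto $X$ from a projective split-surject after applying $\EE(P,-)$ for $P$ projective). Applying the left-exact functor $\Hom_{\Mod\PP}(-, \P(Y))$ and comparing with the sequence obtained by applying $\EE(-,Y)$ to $P_1 \to P_0 \to X$---these two sequences agree on the $P_i$ by the Yoneda computation above---a diagram chase (five-lemma style) identifies $\EE(X,Y)$ with $\Hom_{\Mod\PP}(\P(X),\P(Y))$. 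The naturality of the Yoneda isomorphism ensures the identification is induced by $\P$ itself.

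For exactness, I would take a conflation $X \infl Y \defl Z$ in $\EE$ and show $\P(X) \to \P(Y) \to \P(Z)$ is a short exact sequence in $\Mod\PP$; since $\Mod\PP$ is abelian, it suffices to check exactness objectwise, i.e. that $0 \to \EE(P,X) \to \EE(P,Y) \to \EE(P,Z) \to 0$ is exact for every $P \in \PP$. Left exactness is automatic for any additive category, and surjectivity on the right is precisely the defining property of projective objects (every map $P \to Z$ lifts along the deflation $Y \defl Z$). Conversely, preservation of projectivity: $\P(P) = \PP(-,P)$ is representable, hence projective in $\Mod\PP$.

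The main obstacle I expect is the full faithfulness argument, specifically justifying that a projective presentation in $\EE$ maps to an honest projective presentation (exact sequence) in $\Mod\PP$---one must be careful that ``enough projectives'' gives a deflation $P_0 \defl X$, then apply the same to the kernel-inflation to get $P_1$, and verify the resulting three-term complex of representables is exact at $\P(X)$ and at $\PP(-,P_0)$; exactness at the latter spot is the subtle point and uses that applying $\EE(Q,-)$ for $Q$ projective turns the conflation $\Omega X \infl P_0 \defl X$ into a short exact sequence of abelian groups. Once this is in hand, the five-lemma comparison and the Yoneda identification are routine.
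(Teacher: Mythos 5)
Your proposal is correct and follows essentially the same route as the paper's proof: exactness is checked objectwise using the lifting property of projectives, preservation of projectivity is the representability of $\P P = \PP(-,P)$, and full faithfulness is obtained by comparing the left-exact sequences induced by a two-term projective presentation $P_1 \to P_0 \to X$ under $\EE(-,Y)$ and under $\Hom_{\Mod\PP}(-,\P Y)$, with the Yoneda lemma supplying the isomorphisms at the projective terms. The one point you flag as subtle (that the presentation stays exact, and that $P_1 \to P_0 \to X$ behaves as a cokernel diagram so the top row is exact) is exactly the point the paper also singles out, and your justification of it is adequate.
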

\begin{proof}
This is well-known and standard. For the convenience of the reader, we shall give a proof.

We first observe that $\P$ is an exact functor. In fact, $\P$ is obviously left exact, and for each projective object $P$ and each deflation $f:Y \twoheadrightarrow Z$, the induced map $\EE(P,Y) \to \EE(P,Z)$ is surjective by the definition of projectivity. In addition, if $P\in\EE$ is projective, then $\P P = \PP(-,P)$ is a projective $\PP$-module. Therefore $\P$ sends projective objects to projective modules.

Next we will see that $\P$ is fully faithful.
Since $\EE$ has enough projectives, for any object $X \in \EE$, there exist conflations $X_2 \rightarrowtail P_1 \twoheadrightarrow X_1$ and $X_1 \rightarrowtail  P_0 \twoheadrightarrow X$. By the exactness of $\P$, this gives an exact sequence $\P P_1 \to \P P_0 \to \P X \to 0$ in $\Mod \PP$, and moreover we can check that $P_1 \to P_0 \to X$ is also a cokernel diagram in $\EE$. Hence we have the following diagram
\begin{eqnarray*}
\xymatrix{
0 \ar[r] & \EE(X,Y) \ar[d]^\P \ar[r] & \EE(P_0,Y)  \ar[d]^\P_\wr \ar[r] & \EE(P_1,Y) \ar[d]_\wr^\P \\
0 \ar[r] & (\Mod\PP)(\P X,\P Y) \ar[r] & (\Mod\PP)(\P P_0,\P Y) \ar[r] & (\Mod\PP)(\P P_1,\P Y) } 
\end{eqnarray*}
whose rows are exact and the second and third vertical morphisms are isomorphisms by the Yoneda lemma. It follows that $\EE(X,Y)\to (\Mod\PP)(\P X,\P Y)$ is also an isomorphism, thus $\P$ is fully faithful.

Note that $\P$ sends a projective resolution of $X$ to a projective resolution of $\P X$, since it preserves projectivity and exactness. Therefore $\P$ induces an isomorphism $\Ext_\EE^i(X,Y) \iso \Ext_{\Mod\PP}^i(X,Y)$ for all $i \geq 0$ because $\P$ is fully faithful. 
\end{proof}

However, $\P$ is far from essentially surjective because $\Mod\PP$ is too large. This leads us to the following definition of a subcategory of $\Mod \PP$. 
Suppose that $\CC$ is an additive category. Recall that a right $\CC$-module $M$ is called \emph{finitely generated} if there exists an epimorphism $\CC(-,C) \defl M$ for some $C\in\CC$. By the Yoneda lemma, finitely generated projective $\CC$-modules are precisely direct summands of representable functors. 

\begin{definition}\label{modc}
Let $\CC$ be a skeletally small additive category.
\begin{enumerate}
\item
We denote by $\proj\CC$ the full subcategory of $\Mod\CC$ consisting of all finitely generated projective $\CC$-modules.
\item
We denote by $\mod\CC$ the full subcategory of $\CC$-modules $X \in \Mod\CC$ such that there exists an exact sequence in $\Mod \CC$ of the form
\begin{eqnarray}\label{projeresol}
\cdots \to P_2 \to P_1 \to P_0 \to X \to 0
\end{eqnarray}
where $P_i$ is in $\proj \CC$ for each $i \geq 0$.
\end{enumerate}
In the same way, for a ring $\Lambda$ we define the subcategories $\proj\Lambda$ and $\mod\Lambda$ of $\Mod\Lambda$.
\end{definition}

Note that the notation $\mod\CC$ often stands for weaker notions, e.g. the category of finitely presented $\CC$-modules. In Proposition \ref{wk} below, we characterize when these notions coincide.

Let us recall the following classes of subcategories of exact categories.
\begin{definition}
Let $\EE$ be an exact category with enough projectives and $\CC$ a full subcategory of $\EE$ which is closed under isomorphisms. 
\begin{enumerate}
\item We say that $\CC$ is \emph{resolving} if $\CC$ satisfies the following conditions.
 \begin{enumerate}
 \item $\CC$ contains all projective objects in $\EE$.
 \item $\CC$ is closed under extensions, that is, for each conflation $L \infl M \defl N$, if both $L$ and $N$ are in $\CC$, then so is $M$.
 \item $\CC$ is closed under kernels of deflations, that is, for each conflation $L \infl M \defl N$, if both $M$ and $N$ are in $\CC$, then so is $L$.
 \item $\CC$ is closed under summands.
 \end{enumerate}
 Dually we define \emph{coresolving} subcategories for an exact category with enough injectives.
\item $\CC$ is called \emph{thick} if it is closed under extensions, kernels of deflations, cokernels of inflations and summands.
\end{enumerate}
\end{definition}
In addition to these classical concepts, we need the following weaker notion when we  later investigate exact categories which are not idempotent complete.
\begin{definition}\label{defpreresol}
Let $\EE$ be an exact category with enough projectives and $\CC$ a full subcategory of $\EE$ closed under isomorphisms. We call $\CC$ \emph{preresolving} if $\CC$ satisfies the following conditions.
 \begin{enumerate}
 \item[(a)] $\add\CC$ contains all projective objects in $\EE$.
 \item[(b)] $\CC$ is closed under extensions.
 \item[(c)] For each $X$ in $\CC$, there exists a conflation $\Omega X \infl P \defl X$ in $\EE$ such that all terms are in $\CC$ and $P$ is projective in $\EE$.
 \end{enumerate}
 Dually we define \emph{precoresolving} subcategories for an exact category with enough injectives.
\end{definition}

We point out that these subcategories are actually \emph{exact} subcategories of $\EE$, because they are closed under extensions.

The following lemma gives the relationship between resolving and preresolving subcategories.

\begin{lemma}\label{resol}
Let $\EE$ be an exact category with enough projectives $\PP$. Then a subcategory $\CC$ of $\EE$ is resolving if and only if it is preresolving and closed under summands.
\end{lemma}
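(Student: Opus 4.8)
The plan is to check the two implications in turn; the forward one is a straightforward unwinding of the definitions, while the real content lies in the converse, where the tool is a pullback (Schanuel-type) argument inside $\EE$.

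\emph{Resolving implies preresolving and closed under summands.} If $\CC$ is resolving it is closed under summands by definition. Axiom (b) of ``preresolving'' (closure under extensions) is literally axiom (b) of ``resolving'', and since $\PP \subseteq \CC$ we get $\PP \subseteq \add\CC$, which is preresolving axiom (a). For axiom (c), given $X \in \CC$, I would use that $\EE$ has enough projectives to pick a conflation $\Omega X \infl P \defl X$ with $P$ projective; then $P \in \PP \subseteq \CC$, and closure of $\CC$ under kernels of deflations forces $\Omega X \in \CC$, so all three terms of the conflation lie in $\CC$.

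\emph{Preresolving and closed under summands implies resolving.} Assume $\CC$ is preresolving and summand-closed. First I would observe that $\CC$ is closed under finite direct sums, since extension-closure applied to the split conflation $C_1 \infl C_1 \oplus C_2 \defl C_2$ gives $C_1 \oplus C_2 \in \CC$ whenever $C_1, C_2 \in \CC$. Together with summand-closure this yields $\add\CC = \CC$, so preresolving axiom (a) upgrades to $\PP \subseteq \CC$, i.e.\ resolving axiom (a); resolving axioms (b) and (d) are among the hypotheses. It then remains to prove closure under kernels of deflations. Let $L \infl M \xrightarrow{q} N$ be a conflation with $M, N \in \CC$. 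Applying preresolving axiom (c) to $N$, pick a conflation $\Omega N \infl P \xrightarrow{p} N$ with $\Omega N, P \in \CC$ and $P$ projective. Take the pullback $E$ of $q$ and $p$; since deflations are stable under pullback and the pullback has the ``same'' kernel, this produces conflations $L \infl E \defl P$ and $\Omega N \infl E \defl M$. The first splits because $P$ is projective, so $E \iso L \oplus P$; substituting into the second yields a conflation $\Omega N \infl L \oplus P \defl M$ with both outer terms in $\CC$. Extension-closure gives $L \oplus P \in \CC$, hence $L \in \CC$ by summand-closure, as desired.

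The only step that is not routine manipulation of the exact-category axioms is the pullback construction together with the identification of the two resulting conflations and their kernels; this is, however, a standard property of exact categories and may simply be cited from \cite{buhler}. Consequently I do not anticipate any serious obstacle; the care needed is mostly in matching up the clauses of the two definitions.
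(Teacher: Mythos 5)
Your proposal is correct and follows essentially the same route as the paper: the forward direction is the same unwinding of definitions, and the converse uses the identical pullback of the given deflation against a projective deflation onto $N$, with the splitting of the row ending in $P$ and the extension $\Omega N \infl E \defl M$ doing the work. The only cosmetic difference is the order in which you invoke extension-closure and summand-closure on $E \iso L \oplus P$.
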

\begin{proof}
The ``only if'' part is clear. Now we suppose that $\CC$ is preresolving and closed under summands. Since $\add \CC = \CC$ in this case, it suffices to show that $\CC$ is closed under kernels of deflations. Let $X\infl Y\defl Z$ be a conflation in $\EE$ in which $Y$ and $Z$ are in $\CC$. By our assumption for $\CC$, there exists a conflation $\Omega Z \infl P \defl Z$ with $\Omega Z$ in $\CC$ and $P$ in $\CC \cap \PP$. We then have the following pullback diagram.
\[
\xymatrix{
 & \Omega Z \ar@{=}[r] \infld & \Omega Z \infld \\
X \inflr \ar@{=}[d] & E \deflr \defld & P \defld \\
X \inflr & Y \deflr & Z
}
\]
Because $\CC$ is closed under extensions, the middle column shows that $E$ is in $\CC$. Furthermore the middle row splits because $P$ is projective in $\EE$. From this $X$ is a summand of $E \in \CC$, which shows that $X$ is actually in $\CC$ since $\CC$ is closed under summands.
\end{proof}

We give basic properties of $\mod\CC$.
\begin{proposition}\label{mod}
Let $\CC$ be a skeletally small additive category. Then $\mod \CC$ is a thick subcategory of $\Mod \CC$. In addition, $\mod\CC$ has enough projectives, and its projective objects are precisely objects in $\proj\CC$.
\end{proposition}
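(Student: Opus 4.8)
The plan is to check, inside the abelian category $\Mod\CC$, the four closure conditions in the definition of a thick subcategory — closed under extensions, under kernels of epimorphisms, under cokernels of monomorphisms, and under summands — and then to read off the statements about projective objects. I would first record two routine preliminaries used throughout: $\mod\CC$ is closed under finite direct sums (concatenate projective resolutions degreewise, since $\proj\CC$ is closed under finite direct sums), and any syzygy $\Omega X = \ker(P_0\defl X)$ coming from the start of a projective resolution of $X\in\mod\CC$ again lies in $\mod\CC$ (shift the resolution); moreover every object of $\mod\CC$ is a quotient of some $P_0\in\proj\CC$, hence of a representable functor, so it is finitely generated, and therefore so is each of its direct summands.

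Next I would treat closure under extensions via the horseshoe lemma: for a short exact sequence $0\to X\to Y\to Z\to 0$ with $X,Z\in\mod\CC$, one builds a projective resolution of $Y$ whose $i$-th term is $P_i^X\oplus P_i^Z\in\proj\CC$. The step I expect to be the main obstacle is closure under summands, which I would handle by a syzygy bootstrap. Call an object \emph{good} if it is a summand of an object of $\mod\CC$; such an object is finitely generated by the preliminaries. The key claim is that if $W$ is good then so is every syzygy $\Omega W=\ker(P_0\defl W)$ with $P_0\in\proj\CC$: writing $W\oplus W'=N\in\mod\CC$ and choosing $Q_0\defl W'$ with $Q_0\in\proj\CC$, one has an epimorphism $P_0\oplus Q_0\defl N$ with kernel $\Omega W\oplus\Omega W'$; comparing with an epimorphism $P'\defl N$ taken from a projective resolution of $N$ (whose kernel lies in $\mod\CC$) via Schanuel's lemma in $\Mod\CC$ yields $\Omega W\oplus\Omega W'\oplus P'$ isomorphic to an object of $\mod\CC$, so $\Omega W$ is good. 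Since good objects are finitely generated, this allows one to choose a projective resolution of any good object one syzygy at a time, proving it lies in $\mod\CC$; in particular $\mod\CC$ is closed under summands. Combining this with Schanuel's lemma one also gets, as a useful by-product, that the kernel of any epimorphism from an object of $\proj\CC$ onto an object of $\mod\CC$ again lies in $\mod\CC$.

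With summand-closure in hand, closure under kernels of epimorphisms and under cokernels of monomorphisms both reduce to the extension case by a pullback argument. For $0\to X\to Y\to Z\to 0$, pull back an epimorphism $P\defl Z$ with $P\in\proj\CC$ along $Y\defl Z$ to obtain short exact sequences $0\to X\to E\to P\to 0$ and $0\to\Omega Z\to E\to Y\to 0$. If $Y,Z\in\mod\CC$, then $\Omega Z\in\mod\CC$ by the by-product above, so $E\in\mod\CC$ by the extension case, and since the first sequence splits, $X$ is a summand of $E$, whence $X\in\mod\CC$. Dually, if $X,Y\in\mod\CC$, the first (split) sequence gives $E\cong X\oplus P\in\mod\CC$, the second then gives $\Omega Z\in\mod\CC$ by the kernel case just proved, and $0\to\Omega Z\to P\to Z\to 0$ together with the extension case gives $Z\in\mod\CC$. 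This establishes thickness.

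Finally, for the statements about projectives: each object of $\proj\CC$ lies in $\mod\CC$ (it has an evident length-one projective resolution) and is projective in $\Mod\CC$, hence projective for the exact structure $\mod\CC$ inherits from $\Mod\CC$. Conversely, for any $X\in\mod\CC$ the beginning of a projective resolution is a conflation $0\to\Omega X\to P_0\to X\to 0$ in $\mod\CC$ with $P_0\in\proj\CC$ and $\Omega X\in\mod\CC$ by the first preliminary, so $\mod\CC$ has enough projectives; and if $X$ is itself projective in $\mod\CC$, this conflation splits, exhibiting $X$ as a summand of $P_0$, so $X\in\proj\CC$.
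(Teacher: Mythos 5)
Your proof is correct, and its skeleton --- horseshoe lemma for extensions, a syzygy bootstrap for summands, and the pullback-along-a-projective-cover trick for kernels of epimorphisms --- matches the paper's, but two of the four closure properties are handled by genuinely different mechanisms, and in a different order (the paper does extensions, cokernels, summands, kernels; you do extensions, summands, kernels, cokernels, with no circularity either way). For summand-closure the paper compares the split sequence $0\to X\to Y\to Z\to 0$ with $0\to\Omega Y\to P\to Y\to 0$ in a $3\times 3$ diagram and then adds $0\to Z\to Z\to 0$ to the resulting column, exhibiting $\Omega Z\oplus Z$ as an extension of $Y\cong X\oplus Z$ by $\Omega Y$, hence in $\mod\CC$; you reach the same conclusion (the syzygy of a summand is again a summand of an object of $\mod\CC$) via Schanuel's lemma, at the small extra cost of checking that summands of objects of $\mod\CC$ are finitely generated so that the bootstrap can keep choosing epimorphisms from $\proj\CC$ --- a point the paper sidesteps because its diagram supplies the next projective from the ambient object's resolution. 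For cokernels of monomorphisms the paper argues directly from that same $3\times 3$ diagram, whereas you reduce to kernel-closure by pulling back $P\defl Z$; both work, and your route has the mild advantage of reusing the pullback construction already needed for kernels. One cosmetic slip: at the end of your cokernel argument, the passage from $\Omega Z\in\mod\CC$ and $0\to\Omega Z\to P\to Z\to 0$ to $Z\in\mod\CC$ is not ``the extension case'' but simply the definition of $\mod\CC$ (splice a projective resolution of $\Omega Z$ onto $P\to Z\to 0$); this is also exactly how the paper concludes from the middle row of its diagram, so nothing is missing.
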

\begin{proof}
It follows from the horseshoe lemma that $\mod\CC$ is closed under extensions. First we will show that $\mod\CC$ is closed under cokernels of monomorphisms. Let $0\to X \to Y \to Z \to 0$ be an exact sequence in $\Mod\CC$ and suppose that $Y$ is in $\mod\CC$. By the definition of $\mod\CC$, there is an exact sequence $0\to \Omega Y \to P \to Y \to 0$ such that $P$ is in $\proj\CC$ and $\Omega Y$ is in $\mod\CC$. We then have the following commutative exact diagram
\begin{equation}\label{dekai}
\xymatrix{
  & 0 \ar[d] & 0 \ar[d] & & \\
  & \Omega Y \ar@{=}[r]\ar[d] & \Omega Y\ar[d] & & \\
0 \ar[r] & \Omega Z \ar[r] \ar[d] & P \ar[r] \ar[d] & Z \ar[r] \ar@{=}[d] & 0 \\
0 \ar[r] & X \ar[r]\ar[d] & Y \ar[r]\ar[d] & Z \ar[r] & 0 \\
 & 0 & 0 & &
}
\end{equation}
whose rows and columns are exact.
If $X$ is also in $\mod\CC$, we have that $\Omega Z$ is also in $\mod\CC$ since $\mod\CC$ is closed under extensions. Then it follows that $Z$ is in $\mod\CC$ by the middle row. Hence $\mod\CC$ is closed under cokernels of monomorphisms.

Next we show that $\mod\CC$ is closed under summands. Let $Z \in \Mod\CC$ be a summand of $Y \in \mod\CC$. We claim that there exists an exact sequence $0 \to \Omega Z \to P \to Z \to 0$ such that $P$ is in $\proj\CC$ and $\Omega Z$ is a summand of some object in $\mod\CC$, which inductively implies that $\mod\CC$ is closed under summands.
We have a split exact sequence $0\to X\to Y \to Z\to 0$ with $Y$ in $\mod\CC$, so we can use the diagram (\ref{dekai}) again and obtain the exact sequence $0 \to \Omega Z \to P \to Z \to 0$. By taking the direct sum of $0 \to Z = Z$ and the first column of (\ref{dekai}), we obtain an exact sequence $0 \to \Omega Y \to \Omega Z \oplus Z \to X \oplus Z \to 0$. Since $Y \iso X \oplus Z$ and $\Omega Y$ are in $\mod\CC$ and $\mod\CC$ is closed under extensions, it follows that $\Omega Z \oplus Z$ is in $\mod\CC$. Thus $\Omega Z$ is a summand of some object in $\mod\CC$.

By the same argument as in the proof of Lemma \ref{resol}, 
one can show that $\mod\CC$ is closed under kernels of epimorphisms, which completes the proof that $\mod\CC$ is thick in $\Mod\CC$.
The last statement of the proposition is obvious by the construction of $\mod\CC$.
\end{proof}

As a corollary, we obtain the following relation between $\mod\CC$ and the category of finitely presented $\CC$-modules (cf. \cite[Proposition 2.1]{aus66}). Recall that $\CC$ is said to have \emph{weak kernels} if for any morphism $f:C_1\to C_0$ in $\CC$, there exists a morphism $g:C_2 \to C_1$ such that $\CC(-,C_2) \xrightarrow{\CC(-,g)}  \CC(-,C_1) \xrightarrow{\CC(-,f)} \CC(-,C_0)$ is exact in $\Mod\CC$.
\begin{proposition}\label{wk}
Let $\CC$ be a skeletally small additive category. Then the category $\mod \CC$ coincides with the category of finitely presented $\CC$-modules if and only if $\CC$ has weak kernels. In this case $\mod\CC$ is an abelian category.
\end{proposition}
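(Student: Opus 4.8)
The plan is to treat the two inclusions between $\mod\CC$ and the category $\mathrm{fp}\,\CC$ of finitely presented $\CC$-modules separately. The inclusion $\mod\CC\subseteq\mathrm{fp}\,\CC$ holds unconditionally and is essentially immediate: a projective resolution $\cdots\to P_1\to P_0\to X\to 0$ with each $P_i\in\proj\CC$ in particular contains a presentation $P_1\to P_0\to X\to 0$, and a routine summand manipulation upgrades a presentation by finitely generated projectives to one by representable functors. So the whole proposition reduces to showing $\mathrm{fp}\,\CC\subseteq\mod\CC$ if and only if $\CC$ has weak kernels.

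For the ``if'' part, assume $\CC$ has weak kernels and let $X\in\mathrm{fp}\,\CC$, with presentation $\CC(-,C_1)\xrightarrow{\CC(-,f_1)}\CC(-,C_0)\xrightarrow{p}X\to 0$ for some $f_1\in\CC(C_1,C_0)$. The first point I would isolate is: if $g\in\CC(C_2,C_1)$ is a weak kernel of $f_1$, then $\im\CC(-,g)=\ker\CC(-,f_1)$ as submodules of $\CC(-,C_1)$ — the inclusion ``$\subseteq$'' is $f_1g=0$, and ``$\supseteq$'' is precisely the defining lifting property of a weak kernel, evaluated objectwise. Consequently the first syzygy $\Omega X:=\ker p=\im\CC(-,f_1)$ admits a presentation $\CC(-,C_2)\to\CC(-,C_1)\to\Omega X\to 0$ by representables, so $\Omega X\in\mathrm{fp}\,\CC$ as well. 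Iterating (take a weak kernel of $g$, and so on) and splicing the successive presentations produces a complex $\cdots\to\CC(-,C_2)\to\CC(-,C_1)\to\CC(-,C_0)\to X\to 0$ of representable functors; exactness at $\CC(-,C_0)$ is the given presentation, and exactness at each subsequent term is the identity ``$\im=\ker$'' associated with the relevant weak kernel. Hence $X\in\mod\CC$.

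For the ``only if'' part, assume $\mod\CC=\mathrm{fp}\,\CC$ and fix $f\in\CC(C_1,C_0)$. Set $X:=\coker\CC(-,f)$; this is finitely presented, hence lies in $\mod\CC$. Now use that $\mod\CC$ is closed under kernels of epimorphisms in $\Mod\CC$ (Proposition~\ref{mod}) together with $\CC(-,C_0)\in\proj\CC\subseteq\mod\CC$: the syzygy $\Omega X=\ker(\CC(-,C_0)\twoheadrightarrow X)=\im\CC(-,f)$ lies in $\mod\CC$, and applying the same closure to the epimorphism $\CC(-,C_1)\twoheadrightarrow\Omega X$ induced by $f$ shows $\ker\CC(-,f)\in\mod\CC$, in particular it is finitely generated. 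Choosing an epimorphism $\CC(-,C_2)\twoheadrightarrow\ker\CC(-,f)$ and composing with the inclusion $\ker\CC(-,f)\hookrightarrow\CC(-,C_1)$, the Yoneda lemma delivers $g\in\CC(C_2,C_1)$ with $\im\CC(-,g)=\ker\CC(-,f)$; that is, $g$ is a weak kernel of $f$.

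For the last assertion, under these equivalent conditions $\mod\CC=\mathrm{fp}\,\CC$, which is additive. A cokernel of a morphism between finitely presented modules is again finitely presented (one adjoins finitely many new relations), so for $\varphi\colon Y\to Z$ in $\mod\CC$ one gets $\coker\varphi\in\mod\CC$, then $\im\varphi=\ker(Z\twoheadrightarrow\coker\varphi)\in\mod\CC$ and $\ker\varphi=\ker(Y\twoheadrightarrow\im\varphi)\in\mod\CC$ by thickness of $\mod\CC$; since a full additive subcategory of an abelian category closed under kernels and cokernels is itself abelian, $\mod\CC$ is abelian (alternatively, one can cite \cite[Proposition~2.1]{aus66}). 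I expect the main friction to lie in the ``if'' direction — getting the iterated weak-kernel construction to splice into an honestly exact complex, and cleanly handling the passage between presentations by representables and by arbitrary finitely generated projectives — whereas the converse is short once the thickness of $\mod\CC$ from Proposition~\ref{mod} is available.
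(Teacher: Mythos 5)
Your proposal is correct and follows essentially the same route as the paper: the ``if'' direction by splicing iterated weak kernels into a projective resolution by representables, the ``only if'' direction by applying the thickness of $\mod\CC$ from Proposition~\ref{mod} to $0\to\ker\CC(-,f)\to\CC(-,C_1)\to\CC(-,C_0)\to\coker\CC(-,f)\to 0$ to see that $\ker\CC(-,f)$ is finitely generated, and the abelianness from closure of finitely presented modules under cokernels plus thickness for kernels. The only difference is that you make explicit the unconditional inclusion $\mod\CC\subseteq\mathrm{fp}\,\CC$ and the exactness of the spliced complex, which the paper leaves implicit.
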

\begin{proof}
Suppose that $\CC$ has weak kernels and $X$ is a finitely presented $\CC$-module. Then we have an exact sequence $\CC(-,C_1) \to \CC(-,C_0) \to X \to 0$ for some objects $C_0$ and $C_1$ in $\CC$. By the Yoneda lemma, the morphism $\CC(-,C_1) \to \CC(-,C_0)$ is induced from a morphism $C_1 \to C_0$. Since $\CC$ has weak kernels, it follows that this morphism has a series of weak kernels $ \cdots \to C_2 \to C_1 \to C_0$, which yields an exact sequence $\cdots \to \CC(-,C_2) \to \CC(-,C_1) \to \CC(-,C_0) \to X \to 0$. Therefore $X$ is in $\mod\CC$.

Conversely, suppose that all finitely presented $\CC$-modules are in $\mod\CC$. Let $C_1 \to C_0$ be a morphism in $\CC$. Consider the exact sequence $0 \to X \to \CC(-,C_1) \to \CC(-,C_0) \to Z \to 0$ in $\Mod\CC$. Then $Z$ is in $\mod\CC$ by our assumption. Since $\mod\CC$ is thick in $\Mod\CC$ by Proposition \ref{mod}, we have $X \in \mod\CC$. In particular we have an exact sequence $\CC(-,C_2) \to \CC(-,C_1) \to \CC(-,C_0)$ since $X$ is finitely generated, which gives a weak kernel of $C_1 \to C_0$.

Finally we show that $\mod\CC$ is abelian if these conditions are satisfied. It is easy to see that the category of finitely presented $\CC$-modules is closed under cokernels in $\Mod\CC$, thus we only have to show that $\mod\CC$ is closed under kernels. However this is clear since $\mod\CC$ is thick in $\Mod\CC$ by Proposition \ref{mod}.
\end{proof}

Now we can prove that $\EE$ can be embedded into $\mod\PP$ as a (pre)resolving subcategory.
\begin{proposition}\label{preresol}
Let $\EE$ be a skeletally small exact category with enough projectives $\PP$ and $\P: \EE \to \mod\PP$ the Morita embedding \emph{(\ref{moritaembedding})}. Then $\P (\EE)$ is a preresolving subcategory of $\mod\PP$ and $\EE$ is exact equivalent to $\P(\EE)$. It is resolving in $\mod\PP$ if and only if $\EE$ is idempotent complete.
\end{proposition}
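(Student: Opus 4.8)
The plan is to verify the three conditions of Definition~\ref{defpreresol} for the image $\P(\EE)$ inside $\mod\PP$, then upgrade to ``resolving'' exactly when $\EE$ is idempotent complete, using Lemma~\ref{resol}. First, I note that $\P(\EE) \subseteq \mod\PP$: for $X \in \EE$, choosing a deflation $P_0 \defl X$ and then a deflation $P_1 \defl \ker(P_0 \defl X)$ (using enough projectives twice) and iterating, the exactness of $\P$ from Proposition~\ref{ff} turns this into an exact sequence $\cdots \to \P P_1 \to \P P_0 \to \P X \to 0$ with each $\P P_i = \PP(-,P_i)$ projective in $\proj\PP$; hence $\P X \in \mod\PP$. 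This is essentially the computation already carried out inside the proof of Proposition~\ref{ff}. Next, condition~(a): projective objects of $\mod\PP$ are by Proposition~\ref{mod} precisely the objects of $\proj\PP$, i.e. summands of representable functors $\PP(-,P)$; since $\P$ sends projectives of $\EE$ to the $\PP(-,P)$, we get that $\add\P(\EE)$ contains all of $\proj\PP = \PP(\mod\PP)$. Condition~(b), that $\P(\EE)$ is extension-closed in $\mod\PP$: given a short exact sequence $0 \to \P X \to M \to \P Z \to 0$ in $\mod\PP$, pull back along a deflation $P \defl Z$ in $\EE$ (with $P$ projective) to replace $M$ by an extension of $\P P$ by $\P X$; since $\P P$ is projective the pulled-back sequence splits, so $M$ is a quotient of $\P X \oplus \P P = \P(X\oplus P)$ in a way one can lift to a deflation in $\EE$ using full faithfulness of $\P$ and projectivity of $P$, exhibiting $M \cong \P W$ for a suitable conflation in $\EE$. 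Condition~(c) is immediate: a conflation $\Omega X \infl P \defl X$ in $\EE$ with $P$ projective maps under the exact functor $\P$ to a short exact sequence in $\mod\PP$ with all terms in $\P(\EE)$ and middle term projective.

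Once $\P(\EE)$ is preresolving, the exact equivalence $\EE \simeq \P(\EE)$ follows from Proposition~\ref{ff}: $\P$ is fully faithful and exact, and to see it reflects exactness, observe that a complex $A \to B \to C$ in $\EE$ whose image is a conflation in $\mod\PP$ must already be a conflation in $\EE$ — take a genuine conflation $A' \infl B \defl C$ in $\EE$ (possible since $C$ has a deflation from a projective and one forms the pullback, or more directly since $\mod\PP$-exactness of $\P B \to \P C$ forces $B \to C$ to be a deflation via projective lifting), compare it with the given complex, and use full faithfulness to identify $A$ with $A'$.

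For the last sentence: if $\EE$ is idempotent complete, then $\P(\EE)$ is closed under summands in $\mod\PP$ — a summand of $\P X$ corresponds via full faithfulness to an idempotent $e$ on $X$ in $\EE$, which splits, so the summand is $\P(\im e) \in \P(\EE)$ — and hence $\P(\EE)$ is preresolving and summand-closed, thus resolving by Lemma~\ref{resol}. Conversely, if $\P(\EE)$ is resolving, it is in particular closed under summands in $\mod\PP$; given an idempotent $e$ on $X \in \EE$, the module $\P X$ splits in $\mod\PP$ (which is abelian-ish enough: idempotents split in any additive category with the relevant kernels, and indeed in $\Mod\PP$ they always split), one summand lies in $\P(\EE) = \P(\EE)$, and pulling back through the equivalence $\EE \simeq \P(\EE)$ splits $e$ in $\EE$.

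The main obstacle I anticipate is condition~(b), closure under extensions in $\mod\PP$: one must produce, from an abstract extension of $\PP$-modules $0 \to \P X \to M \to \P Z \to 0$, an actual conflation in $\EE$ whose $\P$-image is isomorphic to this sequence. The device is the pullback along a deflation $P \defl Z$ from a projective, which splits after applying $\P$ and lets one transport the extension back to $\EE$ via the full faithfulness of $\P$ on $\proj\PP$-presentations; getting the bookkeeping of this pullback/pushout argument exactly right — in particular checking that the resulting map in $\EE$ is genuinely a conflation and not merely a complex — is the one place that needs care rather than routine verification.
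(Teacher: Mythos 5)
Your proposal is correct and follows essentially the same route as the paper's proof: the same verification of conditions (a)--(c) of Definition~\ref{defpreresol} (with closure under extensions handled by lifting the extension to $\EE$ via a projective presentation of the quotient and a pushout, exactly as in the paper), the same projective-lifting argument for reflection of exactness, and the same reduction of the final claim to Lemma~\ref{resol}. The one step you gloss over is why ``$P\to B\to C$ is a deflation and $B\to C$ has a kernel'' implies $B\to C$ is itself a deflation; this is not automatic and is where the paper invokes \cite[Proposition A.1.(c)]{keller}.
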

\begin{proof}
Since $\EE$ has enough projectives and $\P$ is an exact functor by Proposition \ref{ff}, one can easily check that $\P(\EE)$ is contained in $\mod\PP$. To show that $\P(\EE)$ is preresolving in $\mod\PP$, we check the conditions of Definition \ref{defpreresol}.

(a): Clearly $\add\P(\EE)$ contains all projective objects in $\mod\PP$, since projective objects in $\mod\PP$ are direct summands of representable functors of the form $\PP (-,P)=\P P$ for $P\in\PP$.

(b): We show that $\P(\EE)$ is closed under extensions. Suppose that $0\to \P L \to X \to \P M \to 0$ is an exact sequence in $\mod\PP$. Then there exists a conflation $\Omega M \infl P \defl M$ in $\EE$, and sending this by $\P$ gives us the following commutative diagram
\begin{eqnarray}\label{a}
\xymatrix{
0 \ar[r] & \P \Omega M \ar[r]\ar[d] & \P P \ar[r]\ar[d] & \P M \ar[r] \ar@{=}[d] & 0\\
0 \ar[r] & \P L \ar[r] & X \ar[r] & \P M \ar[r] & 0
}
\end{eqnarray}
with exact rows. The vertical morphisms exist by the projectivity of $\P P$. Then we obtain the corresponding morphism $\Omega M \to L$ in $\EE$ because $\P$ is fully faithful. By taking pushout, we get the following commutative diagram
\[
\xymatrix{
\Omega M \inflr\ar[d] & P \deflr\ar[d] & M \ar@{=}[d]\\
L \inflr & N \deflr & M 
}
\]
in $\EE$. Because $\P$ is exact, $\P$ sends this diagrams to the diagram isomorphic to (\ref{a}). This proves that $X \iso \P N$.

(c): Let $M$ be any object in $\EE$. Since there exists a conflation $\Omega M \infl P \defl M$ with $P$ in $\PP$, we have an exact sequence $0 \to \P\Omega M \to \P P \to \P M \to 0$ in $\mod\PP$. Since $\P P$ is projective, the condition (c) holds.

Next we show that $\EE$ is exact equivalent to $\P(\EE)$. We have to confirm that for morphisms $L\to M\to N$ in $\EE$, if $0\to \P L \to \P M \to \P N \to 0$ is exact, then $L \to M \to N$ is a conflation. First note that $L \to M \to N$ is a kernel and cokernel pair because $\P$ is fully faithful and $0\to \P L \to \P M \to \P N \to 0$ is exact. Take a deflation $P \defl N$ in $\EE$ with $P$ projective. Since $\P P$ is projective, $\P P \to \P N$ factors through $\P M \defl \P N$. Because $\P$ is fully faithful, this implies that $P\defl N$ also factors through $M \to N$. This shows that the composition $P \to M \to N$ is a deflation in $\EE$, and $M \to N$ has a kernel $L \to M$. By \cite[Proposition A.1.(c)]{keller}, we have that $M \to N$ is a deflation, which shows that $L \to M \to N$ is a conflation in $\EE$.

Finally we check the last statement. If $\EE$ is idempotent complete, then clearly so is $\P(\EE)$. This shows that $\P(\EE)$ is closed under summands in $\mod\PP$. Thus $\EE$ is resolving by Lemma \ref{resol}. Conversely, suppose that $\P(\EE)$ is resolving. Since $\mod \PP$ is idempotent complete by Proposition \ref{mod}, the assumption that $\P(\EE) \subset \mod\PP$ is closed under summands clearly implies that $\P(\EE)$ is idempotent complete, so is $\EE$.
\end{proof}

This proposition gives a simple criterion for a morphism in an exact category with enough projectives to be a deflation.
\begin{corollary}
Let $\EE$ be a skeletally small idempotent complete exact category with enough projectives, and let $f:X \to Y$ be a morphism in $\EE$. Then $f$ is a deflation in $\EE$ if and only if for any projective object $P \in \EE$, the induced map $\EE(P,f): \EE(P,X) \to \EE(P,Y)$ is surjective.
\end{corollary}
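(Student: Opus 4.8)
The plan is to transport the statement to $\mod\PP$ via the Morita embedding $\P$ and then invoke the results already established. The ``only if'' direction needs nothing: if $f$ is a deflation and $P$ is projective in $\EE$, then $\EE(P,f)$ is surjective by the very definition of a projective object.

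For the converse, suppose $\EE(P,f)$ is surjective for every projective $P\in\EE$, i.e. for every $P\in\PP$. Applying $\P$ and using that it is fully faithful with $\P P=\PP(-,P)$ (Proposition \ref{ff}), I would identify $\EE(P,f)$ with $\mod\PP(\PP(-,P),\P f)$; so $\mod\PP(\PP(-,P),\P f)$ is surjective for all $P\in\PP$. Since cokernels in $\Mod\PP$ are computed pointwise and $\mod\PP(\PP(-,P),-)$ is evaluation at $P$ by the Yoneda lemma, this says exactly that $\P f\colon\P X\to\P Y$ is an epimorphism in $\Mod\PP$, hence in $\mod\PP$. Now $\mod\PP$ is thick in $\Mod\PP$ by Proposition \ref{mod}, so it is closed under kernels of epimorphisms; therefore $K:=\ker(\P f)$ lies in $\mod\PP$ and $0\to K\to\P X\xrightarrow{\P f}\P Y\to 0$ is a conflation in the exact category $\mod\PP$, i.e. $\P f$ is a deflation there.

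Next I would bring in idempotent completeness. By Proposition \ref{preresol}, $\P(\EE)$ is a \emph{resolving} subcategory of $\mod\PP$ — this is precisely the step where idempotent completeness is used — and in particular it is closed under kernels of deflations. Since $\P X$ and $\P Y$ lie in $\P(\EE)$, it follows that $K\in\P(\EE)$, say $K\iso\P Z$ with $Z\in\EE$; transporting the monomorphism $K\infl\P X$ along this isomorphism and using full faithfulness of $\P$, I obtain $g\colon Z\to X$ in $\EE$ with $\P g$ the inclusion. Thus $0\to\P Z\xrightarrow{\P g}\P X\xrightarrow{\P f}\P Y\to 0$ is exact with all terms in $\P(\EE)$, hence a conflation of the exact subcategory $\P(\EE)$.

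Finally, since $\P\colon\EE\to\P(\EE)$ is an exact equivalence by Proposition \ref{preresol}, it reflects exactness, so $Z\xrightarrow{g}X\xrightarrow{f}Y$ is a conflation in $\EE$; in particular $f$ is a deflation. The only genuine bookkeeping point, and the place to be careful, is the passage from $K$ back to an object of $\EE$: one must check that the resulting conflation really uses the \emph{given} morphism $f$ (which it does, since the cokernel map of $K\infl\P X$ is $\P f$ up to isomorphism and $\P$ is faithful), rather than merely exhibiting \emph{some} deflation with the correct source and target.
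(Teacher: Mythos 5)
Your proof is correct and follows essentially the same route as the paper: identify surjectivity of all $\EE(P,f)$ with $\P f$ being an epimorphism, use thickness of $\mod\PP$ in $\Mod\PP$ and the resolving property of $\P(\EE)$ (via idempotent completeness) to place the kernel in $\P(\EE)$, and then invoke that $\P$ reflects exactness. Your extra care that the reflected conflation involves the given $f$ itself is a nice touch, but the argument is the paper's.
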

\begin{proof}
The ``only if'' part is clear. Suppose that $\EE(P,X)\to\EE(P,Y)$ is surjective for any projective object $P$. This is clearly equivalent to that $\P X \to \P Y$ is surjective. Since both $\P(\EE) \subset \mod\PP$ and $\mod\PP \subset \Mod\PP$ are subcategories closed under kernels of epimorphisms, so is $\P(\EE) \subset \Mod\PP$. Thus we obtain an exact sequence $0\to \P Z \to \P X \to \P Y \to 0$ in $\P(\EE)$. Since $\P$ reflects exactness and is fully faithful by Proposition \ref{preresol}, we have a conflation $Z\infl X\defl Y$, which shows that $f$ is a deflation.
\end{proof}

As a consequence, we give a correspondence between (pre)resolving subcategories of module categories and exact categories with enough projectives. To state it accurately, we need some preparation.

Let us recall the following classical Morita theorem.
\begin{definition-proposition}\label{morita}
Let $\CC$ and $\DD$ be skeletally small additive categories. The following are equivalent.
\begin{enumerate}
\item There exists an equivalence $\Mod\CC \equi \Mod\DD$.
\item There exists an exact equivalence $\mod\CC \equi \mod\DD$.
\item There exists an equivalence $\proj\CC \equi \proj\DD$.
\end{enumerate}
In this case, we say that $\CC$ and $\DD$ are \emph{Morita equivalent}. For example, $\CC$ and $\proj\CC$ are always Morita equivalent. Moreover, if $\CC$ and $\DD$ are both idempotent complete, then these are Morita equivalent if and only if $\CC \equi \DD$.
\end{definition-proposition}
\begin{proof}
The last assertion and the equivalence of (1) and (3) are classical, and we refer the reader to \cite[Proposition 2.6]{A-almost1} for the proof.

(1) $\Rightarrow$ (2):
The equivalence $\Mod\CC \equi \Mod\DD$ induces an equivalence $\proj\CC \equi \proj\DD$ since \emph{finitely generated projective} is a categorical notion. It follows from the definition of $\mod\CC$ that it also induces an exact equivalence $\mod\CC \equi \mod\DD$.

(2) $\Rightarrow$ (3):
Projective objects in the exact category $\mod\CC$ are precisely objects in $\proj\CC$ by Proposition \ref{mod}. Then the assertion follows immediately.
\end{proof}

Note that $\CC$ is idempotent complete if and only if $\proj \CC \equi \CC$. Also observe that any additive functor $F:\CC \to \DD$ induces an additive functor $\proj F:\proj\CC \to \proj\DD$.

Our aim is to classify exact categories with enough projectives such that the subcategories of projective objects are (Morita) equivalent to a fixed category $\CC$. To this purpose, we have to introduce the appropriate notion of equivalence between these categories.

\begin{definition}\label{cequiv}
Suppose that $\EE$ and $\EE'$ are skeletally small exact categories with enough projectives. Assume that $\PP(\EE)$ and $\PP(\EE')$ are Morita equivalent to $\CC$ via equivalences $F: \proj\CC \equi \proj\PP(\EE)$ and $F': \proj\CC \equi \proj\PP(\EE')$.
We say that pairs $(\EE,F)$ and $(\EE',F')$ are \emph{$\CC$-equivalent} if there exists an exact equivalence $G:\EE \equi \EE'$ such that the following diagram commutes up to natural isomorphism
\[
\xymatrix{
\proj \PP(\EE) \ar[d]_{\proj \iota} & \proj \CC \ar[l]_(.4){F}^(.4){\equi} \ar[r]^(.4){F'}_(.4){\equi} & \proj \PP(\EE') \ar[d]^{\proj \iota '}  \\
\proj\EE \ar[rr]_{\proj G}^{\equi} & &  \proj\EE' \\
}
\]
where $\iota$ and $\iota'$ are the inclusions.
\end{definition}
Note that if $\CC$, $\EE$ and $\EE'$ are idempotent complete, then the above diagram can be replaced by the following one.
\[
\xymatrix{
\PP(\EE) \ar[d]_{\iota} & \CC \ar[l]_(.35){F}^(.35){\equi} \ar[r]^(.35){F'}_(.35){\equi} & \PP(\EE') \ar[d]^{\iota '}  \\
\EE \ar[rr]_{G}^{\equi} & &  \EE' \\
}
\]

Now we are in position to state the following Morita-type theorem. This theorem says that exact categories with enough projectives with a fixed category $\CC$ of projective objects are completely classified by resolving subcategories of  $\mod\CC$.
\begin{theorem}\label{bijection}
Let $\CC$ be a skeletally small additive category.
\begin{enumerate}
\item
 There exists a bijection between the following two classes.
 \begin{enumerate}
  \item $\CC$-equivalence classes of pairs $(\EE,F)$ where $\EE$ is a skeletally small exact category with enough projectives $\PP$ such that $\PP$ is Morita equivalent to $\CC$.
  \item Preresolving subcategories of $\mod\CC$.
 \end{enumerate}
 It sends $(\EE,F)$ in {\upshape (a)} to the image of $\EE \to \mod \PP \equi \mod\CC$, and the inverse map sends $\EE$ in {\upshape (b)} to $(\EE,\mathrm{id})$ for the identity functor $\mathrm{id}:\proj\CC = \proj\CC$.
\item If $\CC$ is idempotent complete, the bijection of {\upshape (1)} restricts to a bijection between the following.
 \begin{enumerate}
  \item $\CC$-equivalence classes of pairs $(\EE,F)$ where $\EE$ is a skeletally small idempotent complete exact categories with enough projectives $\PP$ such that $\PP$ is equivalent to $\CC$.
  \item Resolving subcategories of $\mod\CC$.
 \end{enumerate}
\end{enumerate}
\end{theorem}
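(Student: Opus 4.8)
The plan is to make the two assignments explicit and verify that they are mutually inverse, reducing everything to the results already in hand: the Morita embedding (Proposition \ref{ff}), the description of its image (Proposition \ref{preresol}), the structure of $\mod\CC$ (Proposition \ref{mod}), and the classical Morita theorem (Definition-Proposition \ref{morita}). Given a pair $(\EE,F)$ with $F\colon\proj\CC\equi\proj\PP(\EE)$, I would let $\Theta_F\colon\mod\PP(\EE)\equi\mod\CC$ be the exact equivalence induced by $F^{-1}$ via Definition-Proposition \ref{morita}, and send $(\EE,F)$ to $\Theta_F(\P(\EE))\subseteq\mod\CC$; by Proposition \ref{preresol} and transport of structure this is preresolving. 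Conversely, a preresolving subcategory $\DD\subseteq\mod\CC$ is an exact category for the inherited exact structure (being extension-closed). Here the first task is to pin down $\PP(\DD)$: any object of $\DD$ that is projective in $\mod\CC$ is projective in $\DD$, Definition \ref{defpreresol}(c) supplies enough of these, and conversely if $Q\in\DD$ is $\DD$-projective then the conflation of (c) splits, realizing $Q$ as a summand of a projective of $\mod\CC$; hence $\PP(\DD)=\DD\cap\proj\CC$. Using (c) to lift a splitting of any $P\in\proj\CC\subseteq\add\DD$ through a deflation from objects of $\DD$, one gets $\add\PP(\DD)=\proj\CC$, so the inclusion $\PP(\DD)\hookrightarrow\proj\CC$ induces a canonical Morita equivalence $F_\DD\colon\proj\CC\equi\proj\PP(\DD)$; when $\DD$ is moreover closed under summands, $\DD\supseteq\add\PP(\DD)=\proj\CC$ forces $\PP(\DD)=\proj\CC$ and $F_\DD=\mathrm{id}$, matching the normalization in the statement. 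We send $\DD$ to $(\DD,F_\DD)$.

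For the round trip starting at $\DD$, the composite $\DD\xrightarrow{\P_\DD}\mod\PP(\DD)\xrightarrow{\Theta_{F_\DD}}\mod\CC$ sends $X\in\DD$ to the $\CC$-module $C\mapsto\Hom_{\mod\CC}(\CC(-,C),X)$, which by the Yoneda lemma is canonically $X$ again; thus this composite is naturally isomorphic to the inclusion $\DD\hookrightarrow\mod\CC$ and has image $\DD$. For the round trip starting at $(\EE,F)$, Proposition \ref{preresol} yields an exact equivalence $\EE\equi\P(\EE)$, which composed with $\Theta_F$ gives an exact equivalence $G$ from $\EE$ onto $\Theta_F(\P(\EE))$; I would then check that $G$ restricts on subcategories of projectives compatibly with $F$ and with $F_{\Theta_F(\P(\EE))}$, so that $G$ witnesses a $\CC$-equivalence. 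The same naturality input shows the first map is constant on $\CC$-equivalence classes: a $\CC$-equivalence $G\colon\EE\equi\EE'$ makes the square of Definition \ref{cequiv} commute, which forces $\Theta_F\iso\Theta_{F'}\circ\overline{G_0}$ for the induced equivalence $\overline{G_0}\colon\mod\PP(\EE)\equi\mod\PP(\EE')$, and naturality of the Morita embedding gives $\overline{G_0}(\P_\EE(\EE))=\P_{\EE'}(\EE')$, so the two images in $\mod\CC$ coincide. This proves (1).

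For (2), assume $\CC$ idempotent complete. If $\EE$ is idempotent complete then $\PP(\EE)$ is too (a summand of a projective is projective, and it exists in $\EE$), so $F$ upgrades to an equivalence $\proj\CC\equi\PP(\EE)\equi\CC$; conversely a resolving $\DD\subseteq\mod\CC$ is idempotent complete since $\mod\CC$ is (Proposition \ref{mod}), whence $\PP(\DD)=\proj\CC\equi\CC$. That the bijection of (1) identifies exactly these two subclasses is then the content of the last sentences of Proposition \ref{preresol} ($\P(\EE)$ is resolving iff $\EE$ is idempotent complete) together with Lemma \ref{resol} (resolving $=$ preresolving and closed under summands).

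The step I expect to be the main obstacle is the bookkeeping with the Morita equivalences inside a $\CC$-equivalence — showing that the commuting square of Definition \ref{cequiv} propagates through the induced equivalences $\Theta_{(-)}$ and through the Morita embedding. This rests on the standard (but, in the excerpt, unstated) naturality of $\P$ with respect to exact functors, and on carefully tracking the identification $\proj\PP(\DD)\equi\proj\CC$ in the case where $\PP(\DD)$ fails to be closed under summands.
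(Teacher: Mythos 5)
Your proposal is correct and follows essentially the same route as the paper: both use Proposition \ref{preresol} for the forward map, identify $\PP(\DD)$ for a preresolving $\DD\subseteq\mod\CC$ as Morita equivalent to $\proj\CC$ for the backward map, verify the round trips via the Yoneda lemma, and reduce part (2) to the idempotent-completeness statements in Proposition \ref{preresol} and Lemma \ref{resol}. The ``main obstacle'' you flag --- propagating the commuting square of Definition \ref{cequiv} through the induced equivalences on module categories and the naturality of the Morita embedding --- is exactly what the paper isolates as Lemma \ref{eqdef} (itself proved by the same Yoneda-type naturality argument you describe), so your treatment is, if anything, more explicit than the paper's.
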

To prove this, we need the following preparation.
\begin{lemma}\label{eqdef}
Pairs $(\EE,F)$ and $(\EE',F')$ are $\CC$-equivalent if and only if the following diagram commutes up to natural isomorphism
\begin{equation}\label{areare}
\xymatrix{
\mod \PP(\EE) \ar[r]^{(-)\circ F}_{\equi} & \mod \CC & \mod \PP(\EE')  \ar[l]_{(-)\circ F'}^{\equi} \\
\EE \ar[u]^(.45){\P}\ar[rr]_{G}^{\equi} & &  \EE' \ar[u]_(.45){\P}\\
}
\end{equation}
where we identify $\mod\PP$ (resp. $\mod\PP'$) with $\mod (\proj \PP)$ (resp. $\mod(\proj \PP')$).
\end{lemma}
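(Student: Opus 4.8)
The plan is to reduce the statement to a comparison of the two squares \emph{at the level of projective objects}, where it becomes a matter of the Yoneda lemma and the Morita identification of Definition-Proposition \ref{morita}. Both the condition of Definition \ref{cequiv} and the commutativity of (\ref{areare}) assert the existence of an exact equivalence $G:\EE\equi\EE'$ fitting into a prescribed square, so it suffices to fix an equivalence $G:\EE\to\EE'$ and show that the square of Definition \ref{cequiv} commutes (up to isomorphism) if and only if (\ref{areare}) does. I would first observe that such a $G$ is automatically exact: if (\ref{areare}) commutes, then $\P\circ G\cong((-)\circ F')^{-1}\circ((-)\circ F)\circ\P$, and the right-hand side is exact and reflects exactness (the restriction functors $(-)\circ F$ and $(-)\circ F'$ are exact equivalences, and $\P$ is exact by Proposition \ref{ff} and reflects exactness by Proposition \ref{preresol}), while the Morita embedding $\P:\EE'\to\mod\PP(\EE')$ is exact and reflects exactness; hence so is $G$.

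The core step is: for a fixed equivalence $G$, the square of Definition \ref{cequiv} commutes if and only if (\ref{areare}) commutes \emph{after restriction to $\PP(\EE)$}. For this I would use two identifications. First, by Proposition \ref{ff} the restriction of $\P:\EE\to\mod\PP(\EE)$ to $\PP(\EE)$ takes values among the projective objects of $\mod\PP(\EE)$, and under the Yoneda identification of the latter with $\proj\PP(\EE)$ it is exactly the canonical functor $\PP(\EE)\to\proj\PP(\EE)$ into the idempotent completion. Second, by Definition-Proposition \ref{morita} the restriction functor $(-)\circ F$ along $F:\proj\CC\to\proj\PP(\EE)$ (using the identification $\mod\PP(\EE)=\mod\proj\PP(\EE)$) is an exact equivalence onto $\mod\CC$ which, on projective objects and again via Yoneda, is a quasi-inverse of $F$. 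Plugging these into (\ref{areare}) restricted to $\PP(\EE)$, and then using that $\proj\iota'$ is fully faithful, that precomposition with the equivalence $F$ is a bijection on natural transformations, that a natural transformation of additive functors into an idempotent complete category extends uniquely along the idempotent completion $\PP(\EE)\to\proj\PP(\EE)$, and that $F\circ F^{-1}\cong\mathrm{id}$, one obtains a bijection between the natural isomorphisms witnessing the square of Definition \ref{cequiv} and those witnessing (\ref{areare}) on $\PP(\EE)$.

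It remains to see that (\ref{areare}) commutes on $\PP(\EE)$ if and only if it commutes on all of $\EE$; the ``only if'' is trivial restriction. For the ``if'', write $H_1:=(-)\circ F\circ\P$ and $H_2:=(-)\circ F'\circ\P\circ G$, which are exact functors $\EE\to\mod\CC$, and let $\beta$ be a natural isomorphism between their restrictions to $\PP(\EE)$. For $X\in\EE$, choose conflations $\Omega^2 X\infl P_1\defl\Omega X$ and $\Omega X\infl P_0\defl X$ with $P_1,P_0$ projective; then $H_i$ sends the induced morphism $P_1\to P_0\to X$ to a presentation of $H_iX$ by projectives of the exact category $\mod\CC$, exactly as in the proof of Proposition \ref{ff} (here one uses that cokernels of admissible morphisms exist in $\mod\CC$, which need not be abelian). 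Hence $\beta_{P_1},\beta_{P_0}$ induce an isomorphism $H_1X\to H_2X$, and a routine check shows it is independent of the chosen presentation and natural in $X$. Assembling the three steps yields the Lemma.

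The \emph{main obstacle} is the bookkeeping in the core step: one must track idempotent completions throughout — this is exactly why Definition \ref{cequiv} applies $\proj$ to $\iota$, $\iota'$, and $G$ instead of working directly with $\PP(\EE)$, $\PP(\EE')$, and $G$ — and one must verify that the Yoneda identifications above are compatible with the functors $\proj\iota$, $\proj\iota'$, $\proj G$ and with the coherence isomorphisms $F\circ F^{-1}\cong\mathrm{id}$, $F^{-1}\circ F\cong\mathrm{id}$. The extension-from-projectives step is routine, but its naturality verification deserves a little care, slightly complicated by the fact that $\mod\CC$ is only exact, not abelian, in general.
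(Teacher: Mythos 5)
Your argument is correct, but it is organized differently from the paper's. The paper disposes of the lemma by citing one general Yoneda-type fact: for fully faithful $K,L:\AA\rightrightarrows\BB$, one has $K\iso L$ if and only if the induced restricted Yoneda functors $K',L':\BB\to\Mod\AA$ are isomorphic; applied with $\AA=\proj\CC$ and $\BB=\EE'$ (so that $K'$ and $L'$ unwind to the two composites in (\ref{areare})), the hard direction is a single evaluation-at-representables argument using the full naturality of the isomorphism on all of $\EE'$, and the easy direction is functoriality. You instead split the problem in two: first you show that the two squares agree after restricting the second variable to $\PP(\EE)$ (via the Yoneda identification $\P|_{\PP(\EE)}\simeq(\PP(\EE)\to\proj\PP(\EE))$, the Morita identification of $(-)\circ F$ as a quasi-inverse of $F$ on projectives, and unique extension of natural transformations along idempotent completions), and then you separately prove that a natural isomorphism between the two exact functors $\EE\to\mod\CC$ defined on $\PP(\EE)$ propagates to all of $\EE$ by comparing projective presentations. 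The second step is extra work that the paper's route avoids entirely, but it buys a genuinely finer statement -- commutativity of (\ref{areare}) need only be checked on projective objects -- and makes explicit the bookkeeping with $\proj\iota$, $\proj\iota'$ and the idempotent completions that the paper leaves to the reader. Your preliminary observation that any equivalence $G$ making (\ref{areare}) commute is automatically an exact equivalence is a harmless bonus not addressed in the paper. Both proofs ultimately rest on the Yoneda lemma and are sound.
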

\begin{proof}
The assertion follows immediately from the following general fact.
Let $\AA$ and $\BB$ be additive categories and $K, L:\AA \rightrightarrows \BB$ fully faithful functors. Consider the composition $K', L':\BB \to \Mod\BB \rightrightarrows \Mod\AA$, where $\BB \to \Mod \BB$ is the Yoneda embedding and $\Mod\BB \rightrightarrows \Mod\AA$ are $(-)\circ K $ and $ (-)\circ L$. Then $K$ and $L$ are isomorphic if and only if $K'$ and $L'$ are isomorphic.
The details are left to the reader.
\end{proof}

\begin{proof}[Proof of Theorem \ref{bijection}]
(1):
The map from (a) to (b) is well-defined by Proposition \ref{preresol} and Lemma \ref{eqdef}.

To see that the map from (b) to (a) is well-defined, it suffices to show that the subcategory $\PP$ of projective objects in $\EE$ is Morita equivalent to $\CC$. Since $\EE$ is a preresolving subcategory of $\mod\CC$, it easily follows that $\PP$ is Morita equivalent to $\PP(\mod\CC) = \proj\CC$, which is Morita equivalent to $\CC$.

These maps are easily seen to be inverse to each other.

(2):
Suppose that $\CC$ is idempotent complete. If $\EE$ is idempotent complete exact category with enough projectives, then $\PP(\EE)$ is also idempotent complete. By Definition-Proposition \ref{morita}, one can see that the bijection in (1) restricts to the one in (2).
\end{proof}

Restring this theorem to the case of rings, one can obtain the following result, whose details are left to the reader. When $\EE$ has enough projectives $\PP$ and $\PP = \add P$ for an object $P$ in $\EE$, we call $P$ a \emph{projective generator}. 
\begin{corollary}
Let $\Lambda$ be a ring.
\begin{enumerate}
\item
 There exists a bijection between the following two classes.
 \begin{enumerate}
  \item $(\proj\Lambda)$-equivalence classes of pairs $(\EE,F)$ where $\EE$ is a skeletally small exact category with a projective generator $P$ such that $\End_\EE(P)$ is Morita equivalent to $\Lambda$.
  \item Preresolving subcategories of $\mod\Lambda$.
 \end{enumerate}
\item
 The bijection of {\upshape (1)} restricts to a bijection between the following.
 \begin{enumerate}
  \item $(\proj\Lambda)$-equivalence classes of pairs $(\EE,F)$ where $\EE$ is a skeletally small idempotent complete exact category with a projective generator $P$ such that $\End_\EE(P)$ is isomorphic to $\Lambda$.
  \item Resolving subcategories of $\mod\Lambda$.
 \end{enumerate}
\end{enumerate}
\end{corollary}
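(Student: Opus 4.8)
The plan is to specialize Theorem~\ref{bijection} to the additive category $\CC := \proj\Lambda$. Since $\proj\Lambda$ is idempotent complete we have $\proj(\proj\Lambda)\equi\proj\Lambda$, and hence $\mod\proj\Lambda$ is exact equivalent to $\mod\Lambda$ (for instance by Definition-Proposition~\ref{morita}, after identifying $\Lambda$ with the one-object additive category whose endomorphism ring is $\Lambda$, which is Morita equivalent to $\proj\Lambda$). Thus the right-hand classes (b) in Theorem~\ref{bijection} for this $\CC$ become precisely the preresolving (resp.\ resolving) subcategories of $\mod\Lambda$, the bijection maps and the notion of $(\proj\Lambda)$-equivalence (which is the $\CC$-equivalence of Definition~\ref{cequiv} for $\CC=\proj\Lambda$) are already as stated, so everything reduces to identifying the left-hand classes (a).

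The heart of the matter is the following dictionary: for a skeletally small exact category $\EE$ with enough projectives $\PP$, the subcategory $\PP$ is Morita equivalent to $\proj\Lambda$ if and only if $\EE$ admits a projective generator $P$ with $\End_\EE(P)$ Morita equivalent to $\Lambda$. The ``if'' part is immediate: if $\PP=\add P$ then $\proj\PP$ is the idempotent completion of $\add P$, namely $\proj\End_\EE(P)$, and this is equivalent to $\proj\Lambda=\proj(\proj\Lambda)$ exactly when $\End_\EE(P)$ and $\Lambda$ are Morita equivalent rings. The ``only if'' part is the step I expect to require the most care. Fix an equivalence $\proj\PP\equi\proj\Lambda$ and let $P\in\PP$ be such that the free module $\Lambda$ corresponds to a direct summand of the image of $P$ in $\proj\PP$ (possible because $\add\Lambda=\proj\Lambda$ and $\proj\PP$ is the idempotent completion of $\PP$). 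Then for every $P'\in\PP$ the representable $\PP(-,P')$ is a direct summand, in $\Mod\PP$, of a finite power of $\PP(-,P)$; since the Yoneda embedding $\PP\hookrightarrow\Mod\PP$ is fully faithful, the section and retraction realizing this already come from morphisms in $\PP$, so $P'$ is a summand of $P^n$ inside $\PP$. Therefore $\PP=\add P$, $P$ is a projective generator of $\EE$, and $\End_\EE(P)$ is Morita equivalent to $\Lambda$ by the ``if'' computation.

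Granting this dictionary, part~(1) is exactly Theorem~\ref{bijection}(1) for $\CC=\proj\Lambda$, rewritten via the translation above. For part~(2) one adds the hypothesis that $\EE$ is idempotent complete; then $\PP$ is idempotent complete (a summand of a projective object is projective, and it lies in $\EE$), so a Morita equivalence $\proj\PP\equi\proj\Lambda$ becomes an honest equivalence $\PP\equi\proj\PP\equi\proj\Lambda$, and taking $P$ to be the image of the free module $\Lambda$ gives $\End_\EE(P)\cong\Lambda$ on the nose and $\add P=\PP$. Conversely, if $\End_\EE(P)\cong\Lambda$ for a projective generator $P$ of an idempotent complete $\EE$, then $\PP=\add P$ is idempotent complete with idempotent completion $\proj\End_\EE(P)=\proj\Lambda$, hence already equivalent to $\proj\Lambda$. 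So the class in (2)(a) coincides with the one in Theorem~\ref{bijection}(2)(a) for $\CC=\proj\Lambda$, which that theorem matches with resolving subcategories of $\mod\Lambda$. Apart from the Yoneda-fullness argument in the second paragraph, the rest is a routine unwinding of Theorem~\ref{bijection} together with Definitions~\ref{modc} and \ref{cequiv}.
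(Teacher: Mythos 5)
Your proof is correct and takes exactly the route the paper intends: the paper states this corollary as a restriction of Theorem~\ref{bijection} to $\CC=\proj\Lambda$ with ``details left to the reader,'' and your dictionary between ``$\PP$ Morita equivalent to $\proj\Lambda$'' and ``$\EE$ has a projective generator $P$ with $\End_\EE(P)$ Morita equivalent to $\Lambda$'' (via the identification of $\proj\PP$ with the idempotent completion of $\add P$, and the Yoneda argument transporting the splitting of $\PP(-,P')$ off $\PP(-,P^n)$ back into $\PP$) is precisely the omitted detail, handled soundly.
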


\subsection{A characterization of $\mod\PP$}
In general $\mod\PP$ has a lot of resolving subcategories. In this subsection, we shall characterize when $\P(\EE)$ and $\mod\PP$ coincide. As an application, we will give a criterion for a given exact category to be equivalent to $\mod\CC$ for some additive category $\CC$.

Let us introduce some terminologies. We say that a complex $A \xrightarrow{f} B \xrightarrow{g} C$ in an additive category $\CC$ is \emph{$\CC(\DD,-)$-exact} for a subcategory $\DD$ of $\CC$ if $\CC(D,A) \xrightarrow{f \circ (-)} \CC(D,B) \xrightarrow{g \circ (-)} \CC(D,C)$ is exact for all $D\in\DD$. Dually we define the \emph{$\CC(-,\DD)$-exactness} in the obvious way. We say that a complex $X_n \to \cdots \to X_1 \to X_0$ in an exact category $\EE$ \emph{decomposes into conflations} if there exist a commutative diagram in $\EE$
\[
\xymatrix@C=1.3em@R=.3em{
A_{n+1} \ar@{ >->}[rd] & & & & A_{2}\ar@{ >->}[rd] & & & & A_0\\
& X_n \ar@{->>}[rd] \ar[rr] & & \cdots \ar@{->>}[ru] \ar[rr] & & X_1 \ar[rr] \ar@{->>}[rd] & & X_0 \ar@{->>}[ru]& \\
& & A_{n} \ar@{ >->}[ru] & & & & A_1 \ar@{ >->}[ru] & &
}
\]
where $A_{n+1}\infl X_n \defl A_n$, $\cdots$, $A_1 \infl X_0 \defl A_0$ are conflations. 
\begin{theorem}\label{rec1}
Let $\EE$ be a skeletally small exact category.
\begin{enumerate}
\item
 The following conditions are equivalent.
 \begin{enumerate}
  \item There exists a skeletally small additive category $\CC$ such that $\EE$ is exact equivalent to $\mod\CC$.
  \item $\EE$ has enough projectives $\PP$, idempotent complete, and for any $\EE(\PP,-)$-exact complex 
\[
\cdots \xrightarrow{f_3} P_2 \xrightarrow{f_2} P_1 \xrightarrow{f_1} P_0
\]
with all terms in $\PP$, the morphism $f_1:P_1 \to P_0$ can be factored as a deflation followed by an inflation.
 \end{enumerate}
\item
 The following conditions are equivalent.
 \begin{enumerate}[label={\upshape(\alph*$'$)}]
  \item There exists a skeletally small additive category $\CC$ with weak kernels such that $\EE$ is exact equivalent to $\mod\CC$.
  \item $\EE$ has enough projectives, idempotent complete, and any morphism in $\PP$ can be factored as a deflation followed by an inflation.
 \end{enumerate}
\end{enumerate}
\end{theorem}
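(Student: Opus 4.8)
The plan is to determine exactly when the image $\P(\EE)$ of the Morita embedding coincides with all of $\mod\PP$; granting this, part (1) follows by taking $\CC := \PP$ and invoking Proposition \ref{preresol}, which already yields an exact equivalence $\EE \equi \P(\EE)$ whenever $\EE$ is idempotent complete with enough projectives. The workhorse throughout will be the following translation, immediate from the Yoneda lemma and the isomorphism $\Mod\PP(\P Q,\P P) \iso \EE(Q,P)$: a complex $\cdots \to P_2 \to P_1 \to P_0$ with all terms in $\PP$ is $\EE(\PP,-)$-exact if and only if its image $\cdots \to \P P_2 \to \P P_1 \to \P P_0$ is an exact (equivalently, pointwise exact) complex in $\Mod\PP$, because the representables $\P Q = \PP(-,Q)$ form a family of projective generators of $\Mod\PP$.

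To prove (1)(b) $\Rightarrow$ (1)(a), I would set $\CC := \PP$, so that by Proposition \ref{preresol} it suffices to show $\mod\PP \subseteq \P(\EE)$. Given $M \in \mod\PP$, I would choose a resolution $\cdots \to \P P_2 \to \P P_1 \xrightarrow{\P f_1} \P P_0 \to M \to 0$ with all projectives representable (possible since $\PP = \PP(\EE)$ is idempotent complete), so that by Yoneda its truncation $\cdots \to \P P_2 \to \P P_1 \to \P P_0$ is the image of an $\EE(\PP,-)$-exact complex $\cdots \to P_2 \to P_1 \xrightarrow{f_1} P_0$ in $\EE$. The hypothesis then factors $f_1$ as a deflation $P_1 \defl K$ followed by an inflation $i \colon K \infl P_0$; completing $i$ to a conflation $K \xrightarrow{i} P_0 \defl C$ in $\EE$ and applying $\P$ produces an exact sequence $0 \to \P K \xrightarrow{\P i} \P P_0 \to \P C \to 0$. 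Since $\P P_1 \defl \P K$ is epic and $\P f_1 = \P i \circ \P p$, we get $\coker(\P f_1) = \coker(\P i)$, so comparison with the resolution gives $M \iso \P C \in \P(\EE)$.

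For the converse (1)(a) $\Rightarrow$ (1)(b), assuming $\EE \equi \mod\CC$, the category $\EE$ is idempotent complete with enough projectives $\PP \iso \proj\CC$ by Proposition \ref{mod}; an $\EE(\PP,-)$-exact complex of projectives becomes an exact complex in $\Mod\CC$ via the translation, and setting $I := \im f_1$ there, the truncated complex gives a $\proj\CC$-resolution of $\ker f_1$, whence $\ker f_1$, and then $I$ and $\coker f_1$, lie in $\mod\CC$ by thickness (Proposition \ref{mod}); thus $P_1 \defl I$ and $I \infl P_0$ are a deflation and an inflation factoring $f_1$. For part (2), I would note that (b$'$) $\Rightarrow$ (b) is trivial, so part (1) gives $\EE \equi \mod\PP$ and it remains to equip $\PP$ with weak kernels: given $f \colon P_1 \to P_0$, factor it by (b$'$) as $P_1 \xrightarrow{p} K \xrightarrow{i} P_0$ with $p$ a deflation and $i$ an inflation, take a conflation $K' \infl P_1 \defl K$ and a deflation $q \colon Q \defl K'$ with $Q \in \PP$, and check that $Q \xrightarrow{q} K' \infl P_1$ is a weak kernel of $f$ — it composes to zero with $f$ since $i$ is monic and $K' = \ker p$, and any $h \colon R \to P_1$ with $R \in \PP$ and $fh = 0$ factors through $K' = \ker p$ and then lifts along $q$ by projectivity of $R$. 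Conversely (a$'$) $\Rightarrow$ (a) is trivial, and then $\mod\CC$ is abelian by Proposition \ref{wk}, so every morphism in $\PP = \proj\CC$ factors as an epimorphism followed by a monomorphism, i.e. a deflation followed by an inflation.

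The step I expect to be the main obstacle is the inclusion $\mod\PP \subseteq \P(\EE)$ in (1)(b) $\Rightarrow$ (1)(a): one must see that the factorization hypothesis applied to the single map $f_1$ — rather than any closure property of $\P(\EE)$ inside $\mod\PP$ — already realizes an arbitrary finitely presented module, which is why the argument is arranged to use exactly one conflation $K \infl P_0 \defl C$ and transport it across $\P$. A secondary delicate point is the bookkeeping identifying $\EE(\PP,-)$-exactness with genuine exactness in $\Mod\PP$, since $\mod\PP$ need not be abelian, forcing all the homological manipulations into the ambient module category.
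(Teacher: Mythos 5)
Your proposal is correct and follows essentially the same route as the paper: the same Yoneda translation between $\EE(\PP,-)$-exactness and exactness in $\Mod\PP$, the same use of the Morita embedding plus the single factorization of $f_1$ to realize an arbitrary object of $\mod\PP$ as $\P C$, the same thickness/resolving argument for the converse, and the same weak-kernel construction $P_2\defl \ker p\infl P_1$ for part (2). The only cosmetic difference is that for (a$'$)$\Rightarrow$(b$'$) you invoke abelianness of $\mod\CC$ via Proposition \ref{wk} where the paper reuses the resolving property of $\mod\CC$ in $\Mod\CC$; both are immediate.
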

\begin{proof}
(a) $\Rightarrow$ (b): We may assume that $\EE=\mod\CC$. Then $\mod\CC$ is idempotent complete and has enough projectives by Proposition \ref{mod}. Note that in $\mod\CC$, the notion of $(\mod\CC)(\proj\CC,-)$-exactness is equivalent to exactness in $\Mod\CC$ by the Yoneda Lemma. Suppose that there exists an exact sequence
\[
\cdots \xrightarrow{f_3} P_2 \xrightarrow{f_2} P_1 \xrightarrow{f_1} P_0
\]
in $\Mod\CC$ whose all terms are in $\proj\CC$. Then the cokernel of $f_1$ is clearly in $\mod\CC$. Since $\mod\CC\subset\Mod\CC$ is resolving and $\coker f_1$ is in $\mod\CC$, we must have $\im f_i$ is in $\mod\CC$ for all $i \geq 1$. This clearly implies that the above complex decomposes into conflations in $\mod\CC$. Especially, $f_1$ is decomposed into $P_1 \defl \im f_1 \infl P_0$.

(a)$' \Rightarrow$ (b)$'$:
Let $f:P_1 \to P_0$ be a morphism in $\proj\CC$. Then $\coker f$ is finitely presented, hence is in $\mod\CC$ by Proposition \ref{wk}. Since $\mod\CC$ is resolving in $\Mod\CC$, the same argument as above implies that $P_1 \defl \im f \infl P_0$ is the desired factorization in $\mod\CC$.

(b) $\Rightarrow$ (a): Recall that the Morita embedding $\P$ realizes $\EE$ as a resolving subcategory of $\mod\PP$ by Proposition \ref{preresol}. 
We must check that $\P :\EE \to \mod\PP$ is essentially surjective. Let $X$ be an arbitrary object in $\mod\PP$. By the definition of $\mod\PP$ we have an exact sequence $\cdots \to \P P_2 \to \P P_1 \to \P P_0 \to X \to 0$ with $P_i$ in $\PP$ for all $i \geq 0$. By (b), the corresponding $P_1 \to P_0$ in $\PP$ factorizes into a deflation followed by an inflation in $\EE$. Then it immediately follows that the cokernel of $\P P_1 \to \P P_0$ is in $\P(\EE)$, which proves that $\P$ is essentially surjective.

(b)$' \Rightarrow$ (a)$'$: We only have to check that $\PP$ has weak kernels. By (b)$'$, for any $P_1 \to P_0$ in $\PP$ there exist conflations $X_1 \infl P_1 \defl X$ and $X \infl P_0 \defl X'$. Moreover since $\EE$ has enough projectives, there exists a conflation $X_2 \infl P_2 \defl X_1$. It is clear that the composition $P_2 \defl X_1 \infl P_1$ is a weak kernel of $P_1 \to P_0$.
\end{proof}

\section{Exact categories with enough projectives and injectives}
In this section, we study exact categories with both enough projectives and enough injectives. 
We will show that an arbitrary exact category with enough projectives and injectives can be realized as a preresolving-precoresolving subcategory of the exact category $\XXX_\WW$, the exact category associated to a Wakamatsu tilting subcategory $\WW$ of a module category. 
We freely use results in the previous section.
\subsection{Wakamatsu tilting subcategories}
Wakamatsu introduced a generalization of the classical concept of tilting modules, called \emph{Wakamatsu tilting modules}, which have possibly infinite projective dimensions \cite{wa,mr}. They are also called \emph{semi-dualizing modules} by some authors \cite{ch,aty}.
With a Wakamatsu tilting module $W$, we can associate an exact category $\XXX_W$ with enough projectives and injectives. Typical examples are the category of Gorenstein projective $\Lambda$-modules $\GP \Lambda$ and the $\Ext$-orthogonal category $^\perp U$ for a cotilting module $U$.
In what follows, we introduce a categorical analogue of Wakamatsu tilting modules, called \emph{Wakamatsu tilting subcategories}.

In this subsection, \emph{we denote by $\EE$ an exact category with enough projectives}. Let $^\perp \WW$ denote the category consisting of objects $X$ in $\EE$ satisfying $\Ext^{>0}_\EE(X,\WW)=0$.

For a subcategory $\WW$ of $\EE$, we first introduce the following subcategory $\XXX_\WW$ of $\EE$ in which objects in $\WW$ behaves like an injective cogenerator, see Proposition \ref{xxww}. We say that a subcategory $\WW$ of an exact category $\EE$ is \emph{self-orthogonal} if $\Ext^{>0}_\EE(\WW,\WW)=0$.
\begin{definition}\label{xxxww}
Let $\EE$ be an exact category with enough projectives and $\WW$ an additive subcategory of $\EE$.
\begin{enumerate}
\item We denote by $\XXX_\WW$ the full subcategory of $\EE$ consisting of all objects $X^0\in {}^\perp \WW$ such that there exist conflations $X^0 \infl W^0 \defl X^1$, $X^1 \infl W^1 \defl X^2$, $\cdots$ with $W^i \in\WW$ and $X^i \in {} ^\perp\WW$ for $i \geq 0$.

\item We say that $\WW$ is a \emph{Wakamatsu tilting subcategory} if it satisfies the following conditions.
\begin{enumerate}
\item $\WW$ is self-orthogonal.
\item $\XXX_\WW$ contains all projective objects in $\EE$.
\end{enumerate}
\end{enumerate}
\end{definition}

Note that the subcategory $\PP(\EE)$ of $\EE$ consisting of all projective objects is always Wakamatsu tilting in $\EE$.
A $\Lambda$-module $W$ for a ring $\Lambda$ is said to be a \emph{Wakamatsu tilting module} or a \emph{semi-dualizing module} if $\add W$ is a Wakamatsu tilting subcategory of $\mod\Lambda$. Our definition coincides with the usual one.

Basic properties of the category $\XXX_\WW$ are as follows.

\begin{proposition}\label{xxww}
Let $\EE$ be an exact category with enough projectives and $\WW$ an additive self-orthogonal subcategory of $\EE$. 
\begin{enumerate}
\item $\XXX_\WW$ is closed under extensions, kernels of deflations and summands in $\EE$ and it is thick in $^\perp\WW$.
\end{enumerate}
If moreover $\WW$ is Wakamatsu tilting, then the following hold.
\begin{enumerate}[resume]
\item $\XXX_\WW$ is a resolving subcategory of $\EE$. 
\item $\XXX_\WW$ has enough projectives and injectives.
\item $P$ in $\XXX_\WW$ is projective in $\XXX_\WW$ if and only if $P$ is projective in $\EE$.
\item $I$ in $\XXX_\WW$ is injective in $\XXX_\WW$ if and only if $I$ is in $\add \WW$.
\end{enumerate}
\end{proposition}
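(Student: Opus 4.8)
The plan is to prove all the closure statements in (1) first, arranging them in an order that avoids circularity, and then to deduce (2)--(5) quickly from (1) together with the definitions. Before starting I would record the routine observations that $\WW\subseteq\XXX_\WW$ (via the split conflations $W\infl W\defl 0$), that $\XXX_\WW\subseteq{}^\perp\WW$, that a cosyzygy $X^1$ occurring in a conflation $X\infl W^0\defl X^1$ with $X\in\XXX_\WW$ again lies in $\XXX_\WW$, and that ${}^\perp\WW$ is closed under extensions, kernels of deflations and summands — all immediate from the long exact sequences of $\Ext^i_\EE(-,W)$, $W\in\WW$, and additivity of $\Ext$.

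For (1) I would establish the closure properties in the following order. \emph{Extensions:} given $X\infl Y\defl Z$ with $X,Z\in\XXX_\WW$, one has $Y\in{}^\perp\WW$, and the horseshoe construction applied to $\WW$-coresolutions of $X$ and $Z$ produces one of $Y$ — the needed extension of $X\infl W^0_X$ along $X\infl Y$ exists because $\Ext^1_\EE(Z,W^0_X)=0$, and each cosyzygy of $Y$ is an extension of a cosyzygy of $Z$ by one of $X$, hence in ${}^\perp\WW$. \emph{Summands:} if $X$ is a summand of $M=X\oplus X'\in\XXX_\WW$ with $M\infl W^0\defl M^1$, then the composite inflation $X\infl M\infl W^0$ has a cokernel $X^1$ sitting (Noether isomorphism) in a conflation $X'\infl X^1\defl M^1$; adding $X$ to this conflation exhibits $X\oplus X^1$ as an extension of $M^1$ by $M$, so $X\oplus X^1\in\XXX_\WW$ by extension-closure, and $X^1$ is again a summand of an object of $\XXX_\WW$. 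Iterating one conflation at a time yields a $\WW$-coresolution of $X$ whose cosyzygies are summands of objects of $\XXX_\WW$, in particular in ${}^\perp\WW$, so $X\in\XXX_\WW$. \emph{Kernels of deflations:} for $X\infl Y\defl Z$ with $Y,Z\in\XXX_\WW$ and $Y\infl W^0_Y\defl Y^1$, the Noether isomorphism for $X\infl Y\infl W^0_Y$ gives a conflation $Z\infl W^0_Y/X\defl Y^1$; extension-closure puts $W^0_Y/X$ in $\XXX_\WW$, and splicing $X\infl W^0_Y\defl W^0_Y/X$ with a $\WW$-coresolution of $W^0_Y/X$ shows $X\in\XXX_\WW$.

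What remains for thickness in ${}^\perp\WW$ is closure under cokernels of inflations whose three terms all lie in ${}^\perp\WW$, and this is the step I expect to be the main obstacle. Given such a conflation $X\infl Y\defl Z$ with $X,Y\in\XXX_\WW$, I would choose a conflation $X\infl W_X\defl X^1$ with $W_X\in\WW$ and $X^1\in\XXX_\WW$, lift the inflation $X\infl W_X$ to a morphism $g\colon Y\to W_X$ (which exists because $\Ext^1_\EE(Z,W_X)=0$), and form the pushout $P$ of $Y\hookleftarrow X\hookrightarrow W_X$. This produces conflations $Y\infl P\defl X^1$ and $W_X\infl P\defl Z$, and $g$ together with $\mathrm{id}_{W_X}$ splits the second, so $P\iso W_X\oplus Z$; by extension-closure $P\in\XXX_\WW$, whence $Z\in\XXX_\WW$ by summand-closure. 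Besides this pushout-and-split trick, the places where I expect to need the most care are making the horseshoe lemma and the Noether isomorphisms precise in an arbitrary exact category, and keeping the order of the four closure proofs (extensions; then summands; then kernels of deflations; then cokernels of inflations) non-circular.

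Finally, (2)--(5) follow quickly. Part (2) is immediate from (1), once one notices that the hypothesis "$\WW$ is Wakamatsu tilting" is precisely the assertion that $\XXX_\WW$ contains $\PP(\EE)$. For (5), each $W\in\WW$ is injective in $\XXX_\WW$ because a conflation $A\infl B\defl C$ in $\XXX_\WW$ has $\Ext^1_{\XXX_\WW}(C,W)=\Ext^1_\EE(C,W)=0$, so every $A\to W$ extends to $B$; conversely an injective object $I$ of $\XXX_\WW$ splits off the conflation $I\infl W^0\defl I^1$ from the definition of $\XXX_\WW$, so $I\in\add\WW=\WW$. For (4), a projective object $P$ of $\EE$ lying in $\XXX_\WW$ is projective in $\XXX_\WW$ since deflations in $\XXX_\WW$ are deflations in $\EE$; conversely a projective object of $\XXX_\WW$ splits off a deflation $Q\defl P$ with $Q\in\PP(\EE)\subseteq\XXX_\WW$ (whose kernel lies in $\XXX_\WW$ by (1)), whence $P\in\PP(\EE)$. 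For (3), enough projectives follows by taking, for $X\in\XXX_\WW$, a deflation $P\defl X$ with $P$ projective in $\EE$ — its kernel lies in $\XXX_\WW$ by (1) and $P$ is projective in $\XXX_\WW$ by (4) — and enough injectives follows since the defining conflation $X\infl W^0\defl X^1$ has $W^0$ injective in $\XXX_\WW$ by (5) and $X^1\in\XXX_\WW$.
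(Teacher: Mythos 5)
Your proposal is correct and takes essentially the same route as the paper: a horseshoe/iterated-pushout argument for extension-closure, the syzygy-juggling argument of Proposition \ref{mod} (in coresolution form) for summands, and the same pushout-plus-splitting trick (using $\Ext^1_\EE(Z,W_X)=0$) for cokernels of inflations inside $^\perp\WW$, with (2)--(5) read off from the definitions exactly as the paper does. The only quibble is your parenthetical ``$\add\WW=\WW$'' in part (5), which is not assumed (the statement deliberately says $I\in\add\WW$); your argument in fact only shows, and only needs, that $I$ is a summand of an object of $\WW$.
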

\begin{proof}
The same proof of Proposition 5.1 in \cite{applications} applies. For the convenience of the reader, we shall give a proof. 

(1):
We first show that $\XXX_\WW$ is closed under extensions. Suppose that $A\infl B\defl C$ is a conflation in $\EE$ with A and C in $\XXX_\WW$. It is easy to see that $^\perp\WW$ is closed under extensions, thus we have $B$ is in $^\perp\WW$. By the definition of $\XXX_\WW$, there exist conflations $A \infl W \defl A^1$ and $C \infl W' \defl C^1$ such that $W$ and $W'$ are in $\WW$ and $A^1$ and $C^1$ are in $\XXX_\WW$, 

Consider the pushout diagram
\[
\xymatrix{
A \inflr\infld & B \infld\deflr & C \ar@{=}[d] \\
W \inflr\defld & U \defld\deflr & C\\
A^1 \ar@{=}[r] & A^1 &
}
\]
in $\EE$. Since $C$ is in $\XXX_\WW \subset {}^\perp\WW$, the middle row splits. Thus we may assume that $U = W \oplus C$. We then have the commutative diagram.
\[
\xymatrix{
B \inflr \ar@{=}[d] & W \oplus C \deflr \infld& A^1 \infld \\
B \inflr & W \oplus W' \deflr \defld & D \defld \\
 & C^1 \ar@{=}[r] & C^1
}
\]
where the middle columns is a direct sum of $W = W \to 0$ and $C \infl W' \defl C^1$. Since $A^1$ and $C^1$ are in $\XXX_\WW$, if follows that $D$ is an extension of objects in $\XXX_\WW$. By considering the middle row, one may proceed this process to see that $B$ is in $\XXX_\WW$.

We can prove that $\XXX_\WW$ is closed under kernels of deflations and summands by the same argument as the proof in Proposition \ref{mod}, so we leave it to the reader.

Next we show that $\XXX_\WW$ is a thick subcategory of $^\perp\WW$. Obviously it suffices to see that $\XXX_\WW$ is closed under cokernels of inflations in $^\perp\WW$. Let $A \infl B\defl C$ be a conflation with all terms in $^\perp\WW$ and assume that $A$ and $B$ are in $\XXX_\WW$. We have a conflation $A \infl W \defl A'$ with $W$ in $\WW$ and $A'$ in $\XXX_\WW$. Then consider the following pushout diagram.
\[
\xymatrix{
A \inflr \infld & B \infld \deflr & C \ar@{=}[d]\\
W \inflr \defld & D \defld \deflr & C \\
A' \ar@{=}[r] & A' &
}
\]
Because $A'$ and $B$ are in $\XXX_\WW$ and $\XXX_\WW$ is closed under extensions, $D$ must be in $\XXX_\WW$. On the other hand, the middle row splits because $C$ is in $^\perp\WW$, which implies that $C$ is a summand of $D$. Since $\XXX_\WW$ is closed under summands, it follows that $C$ is in $\XXX_\WW$, which shows that $\XXX_\WW \subset {}^\perp\WW$ is thick.

(2)-(5):
If $\WW$ is in addition Wakamatsu tilting, then by definition all projective objects are in $\XXX_\WW$, which clearly implies that $\XXX_\WW$ is resolving in $\EE$.
The remaining assertions are obvious from the definition.
\end{proof}

\subsection{Morita-type theorem}
We assume that \emph{$\EE$ is a skeletally small exact category with enough projectives $\PP=\PP(\EE)$ and enough injectives $\II=\II(\EE)$.} 
The following result gives an explicit description of the image of the Morita embedding functor $\P:\EE \to \mod\PP$ (\ref{moritaembedding}) in terms of a Wakamatsu tilting subcategory.

\begin{theorem}\label{pe}
Let $\EE$ be a skeletally small exact category with enough projectives $\PP$ and enough injectives $\II$. For the Morita embedding $\P:\EE \to \mod\PP$, we set $\WW = \P(\II)$. Then $\WW$ is a Wakamatsu tilting subcategory of $\mod\PP$, and $\P(\EE)$ is a preresolving-precoresolving subcategory of $\XXX_\WW$ (Definition \ref{defpreresol}). Moreover it is resolving-coresolving if and only if $\EE$ is idempotent complete.
\end{theorem}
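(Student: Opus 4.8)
The plan is to build on Proposition~\ref{preresol}, which already exhibits $\P(\EE)$ as a preresolving subcategory of $\mod\PP$ exact equivalent to $\EE$, together with Proposition~\ref{xxww}, and to bridge the two via a comparison of $\Ext$-groups. The one genuinely new homological input is: for all $X,Y\in\EE$ one has $\Ext^i_{\mod\PP}(\P X,\P Y)\iso\Ext^i_\EE(X,Y)$, and in particular $\Ext^{>0}_{\mod\PP}(\P X,\P I)=0$ whenever $I\in\II$. I would prove this by choosing a projective resolution $\cdots\to P_1\to P_0\to X\to 0$ of $X$ in $\EE$ (it decomposes into conflations since $\EE$ has enough projectives), applying the exact, projectivity-preserving, fully faithful functor $\P$ of Proposition~\ref{ff} to obtain a genuine projective resolution $\cdots\to\P P_1\to\P P_0\to\P X\to 0$ in $\mod\PP$, and then identifying $\Mod\PP(\P P_\bullet,\P Y)$ with $\EE(P_\bullet,Y)$ via full faithfulness; for $Y=I$ injective the higher cohomology vanishes.

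\emph{Step 1: $\WW=\P(\II)$ is Wakamatsu tilting in $\mod\PP$.} Self-orthogonality $\Ext^{>0}_{\mod\PP}(\WW,\WW)=0$ is immediate from the $\Ext$-comparison, since injective objects of $\EE$ have no higher self-extensions. For the second axiom I must show $\XXX_\WW\supseteq\proj\PP$; as $\WW$ is now known to be self-orthogonal, Proposition~\ref{xxww}(1) applies and $\XXX_\WW$ is closed under summands, so, because $\proj\PP=\add\P(\PP)$, it suffices to place each $\P P$ ($P\in\PP$) in $\XXX_\WW$. Projectivity of $\P P$ in $\mod\PP$ gives $\P P\in{}^\perp\WW$ for free; and an injective coresolution $P=Y^0\infl I^0\defl Y^1\infl I^1\defl\cdots$ of $P$ in $\EE$ (it exists since $\EE$ has enough injectives) is carried by the exact functor $\P$ to conflations $\P Y^i\infl\P I^i\defl\P Y^{i+1}$ with $\P I^i\in\WW$ and, by the $\Ext$-comparison applied to the $\EE$-objects $Y^i$, with $\P Y^i\in{}^\perp\WW$; this is exactly the defining data for $\P P\in\XXX_\WW$. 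Running the same argument with $P$ replaced by an arbitrary $X\in\EE$ shows $\P(\EE)\subseteq\XXX_\WW$.

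\emph{Step 2: $\P(\EE)$ is preresolving--precoresolving in $\XXX_\WW$.} Since $\WW$ is Wakamatsu tilting, Proposition~\ref{xxww} shows $\XXX_\WW$ has enough projectives and injectives, with projectives exactly $\proj\PP$ (using $\proj\PP\subseteq\XXX_\WW$) and injectives exactly $\add\WW=\add\P(\II)$. Because $\P(\EE)\subseteq\XXX_\WW$ and the exact structure on $\XXX_\WW$ is inherited from $\mod\PP$, the three preresolving axioms for $\P(\EE)$ inside $\mod\PP$ (Proposition~\ref{preresol}) transfer verbatim to $\XXX_\WW$: closure under extensions is inherited, $\add\P(\EE)\supseteq\add\P(\PP)=\proj\PP$ gives the projective condition, and a conflation $\Omega M\infl P_0\defl M$ in $\EE$ with $P_0$ projective maps under $\P$ to the required conflation in $\XXX_\WW$. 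Dually I verify the precoresolving axioms directly: $\add\P(\EE)\supseteq\add\P(\II)=\II(\XXX_\WW)$; closure under extensions again; and for $\P M\in\P(\EE)$ a conflation $M\infl J\defl N$ in $\EE$ with $J\in\II$ maps under $\P$ to a conflation $\P M\infl\P J\defl\P N$ with all terms in $\P(\EE)\subseteq\XXX_\WW$, $\P J$ injective in $\XXX_\WW$, and $\P N\in\P(\EE)$.

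\emph{Step 3: the idempotent completeness criterion.} By Lemma~\ref{resol} and its dual, $\P(\EE)$ is resolving--coresolving in $\XXX_\WW$ precisely when it is in addition closed under summands in $\XXX_\WW$. Since $\XXX_\WW$ is closed under summands in $\mod\PP$ (Proposition~\ref{xxww}(1)) and $\mod\PP$ is idempotent complete (Proposition~\ref{mod}), closure of $\P(\EE)$ under summands in $\XXX_\WW$ coincides with closure in $\mod\PP$, which by Proposition~\ref{preresol} holds if and only if $\EE$ is idempotent complete. I expect the main obstacle to be the bookkeeping in Step~1: checking that $\XXX_\WW$ captures all of $\proj\PP$ rather than merely the representables $\P(\PP)$, and that applying $\P$ to an injective coresolution in $\EE$ stays inside ${}^\perp\WW$ at every stage --- both rest entirely on the $\Ext$-comparison, so establishing that cleanly is the crux.
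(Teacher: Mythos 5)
Your proposal is correct and follows essentially the same route as the paper: embed via $\P$, use the injective coresolutions in $\EE$ together with the fact that $\P$ preserves $\Ext$-groups to get $\P(\EE)\subseteq\XXX_\WW$ and the Wakamatsu tilting property of $\WW=\P(\II)$ (closure of $\XXX_\WW$ under summands handling $\proj\PP=\add\P(\PP)$), then verify the preresolving--precoresolving axioms from Proposition~\ref{xxww}(4)(5), and settle the summand condition via Lemma~\ref{resol} and Proposition~\ref{preresol}. The only difference is that you make explicit the $\Ext$-comparison $\Ext^i_{\mod\PP}(\P X,\P Y)\iso\Ext^i_\EE(X,Y)$, which the paper asserts as "easy to see"; your argument for it is the intended one.
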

\begin{proof}
First we see that $\P(\EE)$ is contained in $\XXX_\WW$. Since $\EE$ has enough injectives $\II$, for any object $X\in\EE$, there exist conflations $X \infl I^0 \defl X^1$, $X^1 \infl I^1 \defl X^2$, $\cdots$ with $I^i$ injective for all $i$. Applying $\P$, we obtain conflations $\P X \infl \P I^0 \defl \P X^1$, $\P X^1 \infl \P I^1 \defl \P X^2$, $\cdots$ with $\P I^i$ in $\WW$ for all $i$. Recall that $\P$ preserves all extension groups by Proposition \ref{ff}, thus $\P(\EE) \subset {}^\perp\WW$ holds. Therefore, the conflations show that $\P X$ is in $\XXX_\WW$.

Next we show that $\WW$ is a Wakamatsu tilting subcategory of $\mod\PP$. Since $\P(\EE) \subset \XXX_\WW \subset {}^\perp\WW$ holds, $\WW$ is clearly self-orthogonal and $\P(\PP)$ is contained in $\XXX_\WW$. On the other hand, $\XXX_\WW$ is closed under direct sums and summands by Proposition \ref{xxww}. Thus it follows that $\proj\PP=\add \P(\PP) \subset \XXX_\WW$, which implies that $\WW$ is Wakamatsu tilting.

It remains to prove that $\P(\EE)$ is a preresolving-precoresolving subcategory of $\XXX_\WW$. 
Since $\P(\EE) \subset \mod\PP$ and $\XXX_\WW \subset \mod\PP$ are extension-closed subcategories, it follows that $\P(\EE)$ is closed under extensions in $\XXX_\WW$.
By Proposition \ref{xxww}(4)(5), projective (resp. injective) objects in $\XXX_\WW$ are precisely objects in $\proj\PP$ (resp. $\add\WW$). Thus $\add \P(\EE)$ contains both projective and injective objects in $\XXX_\WW$. 
Moreover the images of projective and injective resolutions in $\EE$ under $\P:\EE \to \XXX_\WW$ yield desired conflations in Definition \ref{defpreresol}(c). Thus $\P(\EE)$ is preresolving-precoresolving in $\XXX_\WW$.

If $\EE$ is idempotent complete, then $\P(\EE)$ is closed under summands in $\mod\PP$, which implies that $\P(\EE)$ is resolving and coresolving in $\XXX_\WW$ by Lemma \ref{resol}. Conversely, suppose that $\P(\EE)$ is resolving or coresolving in $\XXX_\WW$. Since $\mod\PP$ is idempotent complete and $\XXX_\WW$ is closed under summands, $\P(\EE)$ is clearly idempotent complete, hence so is $\EE$. 
\end{proof}
Note that in the case of Frobenius categories, Theorem \ref{pe} was shown in \cite[Theorem 4.2]{chen} 

Conversely, any preresolving-precoresolving subcategories of $\XXX_\WW$ clearly have enough projectives and injectives. Hence one obtains the following classification of exact categories with enough projectives and injectives. First we modify the notion of the $\CC$-equivalence defined in Definition \ref{cequiv}.
\begin{definition}\label{cwequiv}
Suppose that $\EE$ and $\EE'$ are skeletally small exact categories with enough projectives and injectives. Assume that $\PP(\EE)$ and $\PP(\EE')$ are Morita equivalent to $\CC$ via equivalences $F: \proj\CC \equi \proj\PP(\EE)$ and $F': \proj\CC \equi \proj\PP(\EE')$, and that $\II(\EE)$ and $\II(\EE')$ are Morita equivalent to $\WW$ via equivalences $H: \proj\WW \equi \proj\II(\EE)$ and $H': \proj\WW \equi \proj\II(\EE')$
We say that pairs $(\EE,F,H)$ and $(\EE',F',H')$ are \emph{$(\CC,\WW)$-equivalent} if there exists an exact equivalence $G:\EE \equi \EE'$ which makes the following diagrams commute up to natural isomorphism
\begin{align*}
\xymatrix{
\proj \PP(\EE) \ar[d]_{\proj \iota} & \proj \CC \ar[l]_(.4){F}^(.4){\equi} \ar[r]^(.4){F'}_(.4){\equi} & \proj \PP(\EE') \ar[d]^{\proj \iota '}  \\
\proj\EE \ar[rr]_{\proj G}^{\equi} & &  \proj\EE' \\
}
&&
\xymatrix{
\proj \II(\EE) \ar[d]_{\proj \iota} & \proj \WW \ar[l]_(.45){H}^(.45){\equi} \ar[r]^(.45){H'}_(.45){\equi} & \proj \II(\EE') \ar[d]^{\proj \iota '}  \\
\proj\EE \ar[rr]_{\proj G}^{\equi} & &  \proj\EE' \\
}
\end{align*}

where $\iota$ and $\iota'$ are the inclusions.
\end{definition}

\begin{theorem}\label{wakamatsucorresp}
Let $\CC$ be an additive category and $\WW$ a Wakamatsu tilting subcategory in $\mod\CC$.
\begin{enumerate}
 \item There exists a bijection between the following two classes.
 \begin{enumerate}
  \item $(\CC,\WW)$-equivalence classes of pairs $(\EE,F,H)$ where $\EE$ is a skeletally small exact category with enough projectives $\PP$ and enough injectives $\II$ such that $\PP$ is Morita equivalent to $\CC$ and $\II$ is Morita equivalent to $\WW$.
  \item Preresolving-precoresolving subcategories of $\XXX_\WW$.
 \end{enumerate}
 Moreover, any exact categories with enough projectives and injectives occur in this way. 
\item Suppose that $\CC$ and $\WW$ are idempotent complete. Then the bijection of {\upshape (1)} restricts to a bijection between the following.
 \begin{enumerate}
  \item $(\CC,\WW)$-equivalence classes of pairs $(\EE,F,H)$ where $\EE$ is a skeletally small idempotent complete exact category with enough projectives $\PP$ and enough injectives $\II$ such that $\PP$ is equivalent to $\CC$ and $\II$ is equivalent to $\WW$.
  \item Resolving-coresolving subcategories of $\XXX_\WW$
 \end{enumerate}
\end{enumerate}
\end{theorem}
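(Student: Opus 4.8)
The plan is to imitate the proof of Theorem~\ref{bijection}, using Theorem~\ref{pe} where that argument used Proposition~\ref{preresol}, together with a strengthening of Lemma~\ref{eqdef} that also keeps track of the injective datum; throughout I identify $\mod\PP$ with $\mod\proj\PP$ as there. For the map from (1)(a) to (1)(b): given $(\EE,F,H)$, form the fully faithful exact functor $\Phi\colon\EE\xrightarrow{\P}\mod\PP(\EE)\xrightarrow{(-)\circ F}\mod\CC$, where $\P$ is the Morita embedding \eqref{moritaembedding}. By Theorem~\ref{pe}, $\P(\II)$ is a Wakamatsu tilting subcategory of $\mod\PP(\EE)$ and $\P(\EE)$ is a preresolving-precoresolving subcategory of $\XXX_{\P(\II)}$; transporting along the exact equivalence $(-)\circ F$ shows that $\Phi(\EE)$ is a preresolving-precoresolving subcategory of $\XXX_{\Phi(\II)}$. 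One then uses $H$ to arrange that $\Phi$ can be chosen so that $\XXX_{\Phi(\II)}=\XXX_\WW$ (equivalently, so that $\Phi(\II)$ and $\WW$ have the same additive closure in $\mod\CC$), so that $\Phi(\EE)$ becomes a genuine preresolving-precoresolving subcategory of $\XXX_\WW$; this is the value of the map. For the reverse map: a preresolving-precoresolving subcategory $\DD\subseteq\XXX_\WW$ is, with the inherited exact structure, a skeletally small exact category with enough projectives and injectives, and since by Proposition~\ref{xxww}(4),(5) the projectives (resp.\ injectives) of $\XXX_\WW$ are exactly $\proj\CC$ (resp.\ $\add\WW$), the same easy argument as in the proof of Theorem~\ref{bijection} shows $\PP(\DD)$ is Morita equivalent to $\CC$ and $\II(\DD)$ to $\WW$; send $\DD$ to $(\DD,\mathrm{id},\mathrm{id})$, for the identity equivalences of $\proj\CC$ and $\proj\WW$.

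Next I would establish the strengthened form of Lemma~\ref{eqdef}: by the same Yoneda-type argument, $(\EE,F,H)$ and $(\EE',F',H')$ are $(\CC,\WW)$-equivalent if and only if there is an exact equivalence $G\colon\EE\equi\EE'$ making the square of \eqref{areare} (for $\P$, $(-)\circ F$, $(-)\circ F'$) commute up to natural isomorphism; the compatibility of $G$ with $H$ and $H'$ on injectives is then automatic, since the image of the injective objects under the Morita embedding is intrinsic to the exact category (Proposition~\ref{xxww}(5)). Granting this, the two maps are mutually inverse exactly as in the proof of Theorem~\ref{bijection}: starting from $\DD\subseteq\XXX_\WW$, the Morita embedding of $\DD$ is naturally isomorphic to the inclusion $\DD\hookrightarrow\mod\PP(\DD)=\mod\CC$, which returns $\DD$; starting from $(\EE,F,H)$, the functor $\Phi$ restricts to an exact equivalence $\EE\equi\Phi(\EE)$ which, by construction, witnesses $(\EE,F,H)\sim(\Phi(\EE),\mathrm{id},\mathrm{id})$. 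The ``moreover'' assertion in (1) is the special case $\CC=\PP(\EE)$, $\WW=\P(\II)$, $F=H=\mathrm{id}$, applied to Theorem~\ref{pe}. For part (2), when $\CC$ and $\WW$ are idempotent complete one invokes Definition-Proposition~\ref{morita} to replace ``Morita equivalent'' by ``equivalent'' throughout, and the last sentence of Theorem~\ref{pe} together with Lemma~\ref{resol} to see that $\Phi(\EE)$ is resolving-coresolving in $\XXX_\WW$ precisely when $\EE$ is idempotent complete; the restricted maps remain mutually inverse.

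The main obstacle, and the one ingredient genuinely beyond the proof of Theorem~\ref{bijection}, is the coherent treatment of the injective datum $H$. The Morita embedding only controls the projective side, so a priori the image $\Phi(\II)$ of the injective objects is merely \emph{Morita equivalent} to the prescribed $\WW$ inside $\mod\CC$, not equal to it; one must therefore use $H$ to ``turn'' the embedding so that $\Phi(\EE)$ actually lands in $\XXX_\WW$, and then verify that this turning respects the $(\CC,\WW)$-equivalence relation. Making this bookkeeping precise --- in effect, pinning down in what sense $\XXX_\WW$ depends only on the pair $(\proj\CC,\WW)$ up to the relevant Morita data, compatibly with $H$ --- is the technical heart of the argument; everything else is a direct transcription of the proof of Theorem~\ref{bijection}.
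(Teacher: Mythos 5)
Your overall strategy coincides with the paper's: the paper's entire proof of this theorem is the single sentence ``The proof of Theorem \ref{bijection} applies,'' so transcribing that argument with Theorem \ref{pe} in place of Proposition \ref{preresol} is exactly what is intended, and your reverse map, the use of Proposition \ref{xxww}(4)(5), and the treatment of part (2) are all fine. The problem is precisely the step you yourself single out as ``the technical heart,'' and your proposed resolution of it does not work as stated. The functor $\Phi=((-)\circ F)\circ\P$ is completely determined by $F$; there is no freedom left to ``turn'' it using $H$. What you actually need for the forward map to land in class (b) is the equality $\add\Phi(\II)=\add\WW$ of subcategories of $\mod\CC$, and the mere existence of an abstract equivalence $H\colon\proj\WW\equi\proj\II$ does not give this: $\mod\CC$ can contain two Wakamatsu tilting subcategories that are equivalent as additive categories but distinct as subcategories of $\mod\CC$, and then $\XXX_{\Phi(\II)}\neq\XXX_\WW$. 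So either class (a) must implicitly be read as consisting of those triples for which $\add\Phi(\II)=\add\WW$, or the datum $H$ has to enter the construction of the embedding in a way you have not specified; as written, the forward map is not well defined.

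Your ``strengthened Lemma \ref{eqdef}'' is also false as stated. If $G\colon\EE\equi\EE'$ is an exact equivalence making the projective-side square commute, then $\proj G$ does restrict to an equivalence $\proj\II(\EE)\equi\proj\II(\EE')$ (exact equivalences preserve injectivity), but there is no reason this restriction intertwines the arbitrarily chosen $H$ and $H'$: replacing $H'$ by its composite with a nontrivial autoequivalence of $\proj\WW$ changes the $(\CC,\WW)$-equivalence class of $(\EE',F',H')$ without changing the associated subcategory of $\XXX_\WW$. Hence the injective compatibility in Definition \ref{cwequiv} is an extra condition rather than an automatic consequence, and with that equivalence relation taken literally the assignment $(\EE,F,H)\mapsto\Phi(\EE)$ fails to be injective on equivalence classes. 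To repair the argument you would need to explain how the redundancy in $H$ is absorbed---for instance by showing that, once $\add\Phi(\II)=\add\WW$ is imposed, $H$ is determined up to the ambiguity already present in the equivalence relation---and no such argument is given. (The paper's one-line proof glosses over both points; but since you correctly identified them as the only content beyond Theorem \ref{bijection}, they are exactly the part of your write-up that needs a proof.)
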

\begin{proof}
The proof of Theorem \ref{bijection} applies.
\end{proof}

One can conclude the following results on a ring immediately.
\begin{corollary}
Let $\Lambda$ be a ring and $W$ a Wakamatsu tilting $\Lambda$-module.
\begin{enumerate}
 \item There exists a bijection between the following two classes.
 \begin{enumerate}
  \item $(\proj\Lambda,\add W)$-equivalence classes of pairs $(\EE,F,H)$ where $\EE$ is a skeletally small exact category with a projective generator $P$ and an injective cogenerator $I$ such that $\End_\EE(P)$ is Morita equivalent to $\Lambda$ and $\End_\EE(I)$ is Morita equivalent to $\End_\Lambda(W)$.
  \item Preresolving-precoresolving subcategories of $\XXX_W$.
 \end{enumerate}
 Moreover, any exact categories with projective generators and injective cogenerators occur in this way.
\item The bijection of {\upshape (1)} restricts to a bijection between the following.
 \begin{enumerate}
  \item $(\proj\Lambda,\add W)$-equivalence classes of pairs $(\EE,F,H)$ where $\EE$ is a skeletally small idempotent complete exact category with a projective generator $P$ and an injective cogenerator $I$ such that $\End_\EE(P)$  is isomorphic to $\Lambda$ and $\End_\EE(I)$ is isomorphic to $\End_\Lambda(W)$.
  \item Resolving-coresolving subcategories of $\XXX_W$.
 \end{enumerate}
\end{enumerate}
\end{corollary}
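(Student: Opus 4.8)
The plan is to deduce this corollary from Theorem~\ref{wakamatsucorresp} by specializing the additive category $\CC$ and the Wakamatsu tilting subcategory $\WW$, and then translating the category-theoretic data into ring-theoretic data through the Morita theorem, Definition-Proposition~\ref{morita}. Concretely, I would put $\CC := \proj\Lambda$ and $\WW := \add W$, regarding $\add W$ as a subcategory of $\mod\Lambda$, which we identify with $\mod\CC$. By the definition of a Wakamatsu tilting module, $\WW = \add W$ is a Wakamatsu tilting subcategory of $\mod\CC$, so Theorem~\ref{wakamatsucorresp} applies. Its right-hand side is the class of preresolving-precoresolving subcategories of $\XXX_\WW = \XXX_W$, which already matches the statement; the concluding clause of the corollary is the specialization of that of Theorem~\ref{wakamatsucorresp}, using that if $\EE$ has a projective generator $P$ and an injective cogenerator $I$ then $\PP(\EE) = \add P$ and $\II(\EE) = \add I$, so that $\P(\II(\EE)) = \add\P(I)$ is the additive closure of a single module and the Wakamatsu tilting subcategory produced by Theorem~\ref{pe} has the form $\add W'$ for the Wakamatsu tilting module $W' := \P(I)$. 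Since moreover $\add W \equi \proj\End_\Lambda(W)$, the $(\CC,\WW)$-equivalence of Definition~\ref{cwequiv} is exactly the $(\proj\Lambda,\proj W)$-equivalence in the statement.

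The real content is in matching the left-hand side. By Definition-Proposition~\ref{morita} an additive category is Morita equivalent to $\proj\Lambda$ if and only if it is Morita equivalent to $\Lambda$, and $\add W$ is Morita equivalent to $\End_\Lambda(W)$; so it suffices to show that, for an exact category $\EE$ with enough projectives, $\PP(\EE)$ is Morita equivalent to a ring $\Lambda$ precisely when $\EE$ has a projective generator $P$ with $\End_\EE(P)$ Morita equivalent to $\Lambda$ (and dually for injectives). The ``if'' direction is immediate from Definition-Proposition~\ref{morita}. For ``only if'': if $\Mod\PP(\EE) \equi \Mod\Lambda$ then $\proj\PP(\EE) \equi \proj\Lambda$, which has a progenerator; as every object of an idempotent completion is a summand of an object of the original category, that progenerator is a summand of some $P \in \PP(\EE)$, and this $P$ is then itself a progenerator of $\proj\PP(\EE)$. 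One checks that in fact $\add P = \PP(\EE)$ already holds inside $\PP(\EE)$ — the splitting idempotents realizing a given object of $\PP(\EE)$ as a summand of a power $P^n$ lie in $\End_{\PP(\EE)}(P^n) = \End_{\proj\PP(\EE)}(P^n)$, and the corresponding retraction is a genuine morphism of $\PP(\EE)$ — and that $\End_\EE(P) = \End_{\proj\PP(\EE)}(P)$ is Morita equivalent to $\Lambda$. Hence $P$ is a projective generator of $\EE$ as required.

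With this dictionary in place, part~(1) is just Theorem~\ref{wakamatsucorresp}(1) read through it, and part~(2) is the idempotent complete case: there $\PP(\EE)$ and $\II(\EE)$ are idempotent complete, so ``Morita equivalent'' upgrades to ``equivalent'' by the last assertion of Definition-Proposition~\ref{morita}, and choosing the generators to be the images of $\Lambda$ and $W$ under the given equivalences $F$ and $H$ makes their endomorphism rings isomorphic — not merely Morita equivalent — to $\Lambda$ and $\End_\Lambda(W)$ (an equivalence inducing ring isomorphisms on endomorphism rings). The only step that is not purely formal bookkeeping is the one isolated above, namely converting abstract Morita equivalence of the subcategory $\PP(\EE)$ (respectively $\II(\EE)$) into an honest projective generator (respectively injective cogenerator) of $\EE$; I expect this to be the main, though modest, obstacle, after which the corollary follows at once.
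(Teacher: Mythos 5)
Your proposal is correct and follows exactly the route the paper intends: the paper derives this corollary from Theorem~\ref{wakamatsucorresp} with no further argument (``one can conclude \dots immediately''), specializing to $\CC=\proj\Lambda$ and $\WW=\add W\equi\proj\End_\Lambda(W)$. The Morita-theoretic bookkeeping you isolate (passing between ``$\PP(\EE)$ Morita equivalent to $\Lambda$'' and ``$\EE$ has a projective generator $P$ with $\End_\EE(P)$ Morita equivalent to $\Lambda$'', via splitting the relevant idempotents inside $\End_\EE(P^n)$) is precisely the detail the paper leaves to the reader, and your treatment of it is sound.
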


Next we consider the case of Frobenius categories. Recall that an exact category $\EE$ is called \emph{Frobenius} if it has enough projectives and injectives, and projective objects and injective objects coincide. A typical example of a Frobenius category is given by the following.
\begin{definition}\label{gpc}
For a skeletally small additive category $\CC$, we denote by $\GP \CC$ the exact subcategory $\XXX_\WW$ of $\mod\CC$ for the Wakamatsu tilting subcategory $\WW=\proj \CC$ of $\mod\CC$. Modules in $\GP \CC$ are called \emph{Gorenstein projective}.  Similarly for a ring $\Lambda$, we denote by $\GP \Lambda$ the exact subcategory $\XXX_\Lambda$ of $\mod\Lambda$.
\end{definition}
It follows from Proposition \ref{xxww} that $\GP \CC$ is a Frobenius category. We remark that different names such as Cohen-Macaulay or totally reflexive are used for $\GP \CC$. 

Using the bijection of Theorem \ref{wakamatsucorresp}, one can easily show the following results about Frobenius categories, where we use the notion of $\CC$-equivalence defined in Definition \ref{cequiv}.

\begin{theorem}
Let $\CC$ be a skeletally small additive category.
\begin{enumerate}
\item There exists a bijection between the following two classes.
 \begin{enumerate}
  \item $\CC$-equivalence classes of pairs $(\EE,F)$ where $\EE$ is a skeletally small Frobenius category such that $\PP(\EE)$ is Morita equivalent to $\CC$.
  \item Preresolving-precoresolving subcategories of $\GP \CC$.
\end{enumerate}
\item Suppose that $\CC$ is idempotent complete. Then the bijection of {\upshape (1)} restricts to a bijection between the following.
 \begin{enumerate}
  \item $\CC$-equivalence classes of pairs $(\EE,F)$ where $\EE$ is a skeletally small idempotent complete Frobenius category such that $\PP(\EE)$ is equivalent to $\CC$.
  \item Resolving-coresolving subcategories of $\GP \CC$.
 \end{enumerate}
\end{enumerate}
\end{theorem}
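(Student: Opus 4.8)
The plan is to specialise Theorem \ref{wakamatsucorresp} to the Wakamatsu tilting subcategory $\WW := \proj\CC$ of $\mod\CC$, for which $\XXX_\WW = \GP\CC$ by Definition \ref{gpc}. The whole statement then reduces to three observations: that a Frobenius category is precisely an exact category with enough projectives and injectives in which $\PP(\EE) = \II(\EE)$ as subcategories; that under this identification the triples $(\EE,F,H)$ and the $(\CC,\proj\CC)$-equivalence of Theorem \ref{wakamatsucorresp} degenerate to the pairs $(\EE,F)$ and the $\CC$-equivalence of Definition \ref{cequiv}; and that the preresolving-precoresolving subcategories of $\GP\CC$ are automatically Frobenius.

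First I would record the following lemma, which is the heart of the matter. If $\DD$ is a preresolving-precoresolving subcategory of $\GP\CC$, then $\DD$ is a Frobenius category with $\PP(\DD) = \II(\DD) = \DD \cap \proj\CC$. Indeed, by Proposition \ref{xxww}(4),(5) applied to $\WW = \proj\CC$ one has $\PP(\GP\CC) = \II(\GP\CC) = \proj\CC$, which is closed under summands. Condition (c) of Definition \ref{defpreresol} and its dual show that $\DD$ has enough projectives and injectives and that every object of $\DD \cap \proj\CC$ is both projective and injective in $\DD$. Conversely, if $Q$ is projective in $\DD$, then a conflation $\Omega Q \infl P \defl Q$ with $P \in \DD \cap \proj\CC$ splits in $\DD$, exhibiting $Q$ as a summand of $P$; since $\proj\CC$ is summand-closed, $Q \in \DD \cap \proj\CC$. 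The injective case is dual, and since $\PP(\GP\CC) = \II(\GP\CC)$ the two descriptions coincide, so $\DD$ is Frobenius. A small supplementary check, using that each object of $\DD$ admits a deflation from an object of $\DD \cap \proj\CC$, shows $\add(\DD \cap \proj\CC) = \proj\CC$, so $\PP(\DD)$ is Morita equivalent to $\CC$. In particular every exact category occurring in part (a) of Theorem \ref{wakamatsucorresp} for this $\WW$ is Frobenius.

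Next I would compare the two equivalence relations. Given a Frobenius category $\EE$ with $\PP(\EE)$ Morita equivalent to $\CC$ via $F \colon \proj\CC \equi \proj\PP(\EE)$, the equality $\II(\EE) = \PP(\EE)$ makes $\II(\EE)$ Morita equivalent to $\WW = \proj\CC$ via the composite $H_F$ of the canonical equivalence $\proj\WW = \proj\proj\CC \equi \proj\CC$ with $F$; this supplies the extra datum required by Theorem \ref{wakamatsucorresp}, and using the bijection of that theorem together with the lemma above one checks that it is, up to $(\CC,\proj\CC)$-equivalence, the only such datum. Moreover, in Definition \ref{cwequiv} the inclusions $\proj\iota$ and $\proj\iota'$ for the injectives literally coincide with those for the projectives (because $\PP(\EE) = \II(\EE)$), so the ``$\II$-square'' is just the ``$\PP$-square'' precomposed with the canonical equivalence $\proj\proj\CC \equi \proj\CC$; hence it commutes as soon as the ``$\PP$-square'' of Definition \ref{cequiv} does. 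Thus $(\CC,\proj\CC)$-equivalence of the triples $(\EE,F,H_F)$ is equivalent to $\CC$-equivalence of the pairs $(\EE,F)$, and the bijection of Theorem \ref{wakamatsucorresp}(1) restricts to the one in part (1). For part (2), note that $\proj\CC$ is always idempotent complete, so when $\CC$ is idempotent complete Theorem \ref{wakamatsucorresp}(2) applies; combining it with the observations above and the fact that exact equivalences preserve both the Frobenius property and idempotent completeness yields the bijection between $\CC$-equivalence classes of idempotent complete Frobenius $\EE$ with $\PP(\EE) \equi \CC$ and resolving-coresolving subcategories of $\GP\CC$.

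The main obstacle is the lemma of the second paragraph: one must verify that a preresolving-precoresolving subcategory of $\GP\CC$ is genuinely Frobenius even though it need not be closed under summands — the key being that its projective (equivalently injective) objects can be pinned down by splitting a projective cover and invoking that $\PP(\GP\CC) = \II(\GP\CC)$ is summand-closed. Once this is settled, the remainder is a routine specialisation of the proofs of Theorems \ref{bijection} and \ref{wakamatsucorresp}.
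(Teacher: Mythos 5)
Your proposal is correct and follows the same route as the paper, which derives this theorem by specialising Theorem \ref{wakamatsucorresp} to the Wakamatsu tilting subcategory $\WW=\proj\CC$ (the paper leaves the verification to the reader). Your lemma identifying $\PP(\DD)=\II(\DD)=\DD\cap\proj\CC$ for a preresolving-precoresolving $\DD\subset\GP\CC$, and your observation that the datum $H$ is redundant up to $(\CC,\proj\CC)$-equivalence, are exactly the details needed to make that specialisation precise.
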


\begin{corollary}
Let $\Lambda$ be a ring.
\begin{enumerate}
\item There exists a bijection between the following two classes.
 \begin{enumerate}
  \item $(\proj\Lambda)$-equivalence classes of pairs $(\EE,F)$ where $\EE$ is a skeletally small Frobenius category $\EE$ with a projective generator  $P$ such that $\End_\EE(P)$ is Morita equivalent to $\Lambda$.
  \item Preresolving-precoresolving subcategories of $\GP \Lambda$.
 \end{enumerate}
\item
 The bijection of {\upshape (1)} restricts to a bijection between the following.
 \begin{enumerate}
  \item $(\proj\Lambda)$-equivalence classes of pairs $(\EE,F)$ where $\EE$ is a skeletally small idempotent complete Frobenius category $\EE$ with a projective generator $P$ such that $\End_\EE(P)$ is isomorphic to $\Lambda$.
  \item Resolving-coresolving subcategories of $\GP \Lambda$.
 \end{enumerate}
\end{enumerate}
\end{corollary}

\subsection{A characterization of $\XXX_\WW$}
Let $\EE$ be a skeletally small exact category with enough projectives $\PP$ and enough injectives $\II$. We have the Morita embedding $\P: \EE \to \XXX_\WW$ for $\WW = \P(\II)$ by Theorem \ref{pe}. The following theorem characterizes when $\P(\EE)$ and $\XXX_\WW$ (or $\GP\PP$) actually coincide. 

\begin{theorem}\label{rec2}
Let $\EE$ be a skeletally small exact category. Then the following are equivalent.
\begin{enumerate}
\item There exist a skeletally small additive category $\CC$ and a Wakamatsu tilting subcategory $\WW$ of $\mod\CC$ such that $\EE$ is exact equivalent to $\XXX_\WW$.
\item $\EE$ is idempotent complete and has enough projectives $\PP$ and enough injectives $\II$, and any $\EE(\PP,-)$-exact and $\EE(-,\II)$-exact complex
\[
\cdots \to P_2 \to P_1 \to P_0 \to I^0 \to I^1 \to I^2 \to \cdots
\] with $P_i$ in $\PP$ and $I^i$ in $\II$ for $i \geq 0$ decomposes into conflations.
\end{enumerate}
\end{theorem}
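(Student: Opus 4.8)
The plan is to derive both implications from the Morita embedding $\P\colon\EE\to\mod\PP$ of Proposition~\ref{preresol} and its analysis in Theorem~\ref{pe}, reducing everything to manipulations inside the abelian category $\Mod\PP$ (resp. $\Mod\CC$). For (1)~$\Rightarrow$~(2) note first that all the conditions in (2) are invariant under exact equivalences: such an equivalence matches projectives with projectives and injectives with injectives, preserves and reflects conflations, and preserves all $\Hom$-groups (hence $\EE(\PP,-)$- and $\EE(-,\II)$-exactness). So I may assume $\EE=\XXX_\WW$ for a Wakamatsu tilting subcategory $\WW$ of $\mod\CC$; idempotent completeness and the data $\PP=\proj\CC$, $\II=\add\WW$ are then furnished by Propositions~\ref{mod} and~\ref{xxww}. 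Given a complex $\cdots\to P_1\to P_0\to I^0\to I^1\to\cdots$ as in (2), the $\EE(\PP,-)$-exactness says (Yoneda lemma, since $\PP=\proj\CC$ and $\XXX_\WW$ is full in $\Mod\CC$) that it is exact in $\Mod\CC$; set $K_0:=\im(P_0\to I^0)$ and $C^j:=\im(I^{j-1}\to I^j)$ for $j\geq1$, with $C^0:=K_0$, all lying in $\mod\CC$ because $\mod\CC$ is thick (Proposition~\ref{mod}). Then $\cdots\to P_1\to P_0\to K_0\to0$ is a projective resolution and $C^{j-1}\infl I^{j-1}\defl C^j$ are conflations in $\mod\CC$. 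The crux is to show $\Ext^{>0}_{\mod\CC}(C^j,W)=0$ for all $W\in\WW$ and $j\geq0$: for $j=0$ this is immediate since $\EE(-,\II)$-exactness keeps $\Hom(-,W)$ exact on $\cdots\to P_1\to P_0\to K_0\to0$; for $j\geq1$ one dimension-shifts along $C^{j-1}\infl I^{j-1}\defl C^j$, using $\Ext^{>0}_{\mod\CC}(\WW,\WW)=0$ and the surjectivity of each restriction map $\Hom(I^{j-1},W)\to\Hom(C^{j-1},W)$, the latter being a short diagram chase that extracts an extension of a given map $C^{j-1}\to W$ from the $\Hom(-,W)$-exactness of the whole complex. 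Consequently each $C^j$ lies in $\XXX_\WW$ (its tail in the complex being a coresolution by injective objects of $\XXX_\WW$ with cosyzygies in $^\perp\WW$), and then the left-hand syzygies $K_i=\ker(P_{i-1}\to P_{i-2})$ lie in $\XXX_\WW$ as well, since $\XXX_\WW$ is resolving in $\mod\CC$, hence closed under kernels of deflations (Proposition~\ref{xxww}). Thus every image of the complex is an object of $\EE=\XXX_\WW$, which is precisely the statement that it decomposes into conflations.

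For (2)~$\Rightarrow$~(1), put $\CC:=\PP$. By Theorem~\ref{pe}, $\WW:=\P(\II)$ is a Wakamatsu tilting subcategory of $\mod\PP$ and $\P$ restricts to a fully faithful exact functor $\EE\to\XXX_\WW$ that reflects conflations; it remains to show it is essentially surjective. Fix $M\in\XXX_\WW$. Using that $\XXX_\WW$ has enough projectives $\proj\PP=\P(\PP)$ (here idempotent completeness of $\EE$, hence of $\PP$, is used), enough injectives $\add\WW$, and is resolving in $\mod\PP$ and thick in $^\perp\WW$ (Proposition~\ref{xxww}), choose a projective resolution $\cdots\to\P P_1\to\P P_0\to M\to0$ with $P_i\in\PP$ and a coresolution $M\infl\P I^0\defl M^1\infl\P I^1\defl\cdots$ with $I^j\in\II$ and $M^j\in\XXX_\WW$. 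Splicing gives a complex $\P\mathbb{D}$ of objects of $\XXX_\WW$ that is exact in $\Mod\PP$ and decomposes into conflations in $\XXX_\WW$; since $\P$ is fully faithful, it equals $\P$ of a complex $\mathbb{D}\colon\cdots\to P_1\to P_0\to I^0\to I^1\to\cdots$ in $\EE$ with $P_i\in\PP$ and $I^j\in\II$. Now $\mathbb{D}$ is $\EE(\PP,-)$-exact because $\EE(P,\mathbb{D})\cong\Hom_{\Mod\PP}(\P P,\P\mathbb{D})$ and $\P P$ is projective in $\Mod\PP$; and $\mathbb{D}$ is $\EE(-,\II)$-exact because $\EE(\mathbb{D},I)\cong\Hom_{\Mod\PP}(\P\mathbb{D},\P I)$, where $\P I$ is injective \emph{in} $\XXX_\WW$ and $\P\mathbb{D}$ decomposes into conflations \emph{in} $\XXX_\WW$, so $\Hom(-,\P I)$ sends each of those conflations to a short exact sequence and the spliced sequence stays exact. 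By hypothesis (2), $\mathbb{D}$ decomposes into conflations in $\EE$; in particular $P_0\to I^0$ factors as $P_0\defl A\infl I^0$ for some $A\in\EE$. Applying the exact functor $\P$ and comparing with the epi-mono factorization $\P P_0\defl M\infl\P I^0$ in the abelian category $\Mod\PP$ yields $\P A\cong M$, so $M\in\P(\EE)$. Hence $\P\colon\EE\to\XXX_\WW$ is essentially surjective, and therefore an exact equivalence, proving (1) with $\CC=\PP$ and $\WW=\P(\II)$.

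The main obstacles I anticipate are two. In (1)~$\Rightarrow$~(2), the surjectivity of the restriction maps $\Hom(I^{j-1},W)\to\Hom(C^{j-1},W)$ is the step where the hypothesis of $\EE(-,\II)$-exactness (beyond mere exactness of the complex) genuinely enters, and getting the dimension-shift bookkeeping right across both ends of the complex is the delicate point. In (2)~$\Rightarrow$~(1), the subtlety is verifying that the pulled-back complex $\mathbb{D}$ is $\EE(-,\II)$-exact: since $\P I$ need not be injective in $\Mod\PP$, one must first recognize that the spliced complex $\P\mathbb{D}$ lives in, and decomposes into conflations within, $\XXX_\WW$, so that $\Hom(-,\P I)$ into an object that is injective in $\XXX_\WW$ preserves exactness. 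The remaining ingredients — transfer of properties along exact equivalences, the identifications $\proj\PP=\P(\PP)$ and $\add\WW=\P(\II)$ under idempotent completeness, and the final factorization comparison $\P A\cong M$ — are routine.
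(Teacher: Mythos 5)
Your proposal is correct and follows essentially the same route as the paper: both directions reduce to the Morita embedding $\P:\EE\to\XXX_\WW$, decompose the spliced complex into short exact sequences in $\mod\CC$ (resp. $\mod\PP$), and establish $\Ext^{>0}(C^j,\WW)=0$ via the surjectivity of the restriction maps $\Hom(I^{j-1},\WW)\to\Hom(C^{j-1},\WW)$ extracted from $\Hom(-,\WW)$-exactness, which is exactly the content of the paper's Lemma \ref{lemmaperp} that you re-prove inline. The only cosmetic difference is that you make explicit the final factorization comparison $\P A\cong M$, which the paper leaves to the exactness of $\P$.
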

To prove this, we need the following technical lemma.
\begin{lemma}\label{lemmaperp}
Let $\CC$ be a skeletally small additive category and $\WW$ an additive self-orthogonal subcategory of $\mod\CC$. Suppose that $\cdots \to M_2 \to M_1 \to M_0 \to M_{-1} \to M_{-2} \to \cdots$ is a complex in $^\perp\WW$ which decomposes into conflations $X_{i+1} \infl M_i \defl X_i$ in $\mod\CC$ for all $i \in \Z$. If this complex is $(\mod\CC)(-,\WW)$-exact, then $X_i$ is in $^\perp\WW$ for all $i \in \Z$.
\begin{proof}
Put $\FF:=\mod\CC$ for simplicity. Since $M_i$'s are in $^\perp\WW$, we have an exact sequence
\[
\FF(M_i, \WW) \to \FF(X_{i+1},\WW) \to \Ext^1_\FF(X_i,\WW) \to 0.
\]
by the long exact sequence of $\Ext$. We first show that $\Ext^1_\FF(X_i,\WW)$ vanishes. Consider the following commutative diagram.
\[
\xymatrix{
0 \ar[r] & \FF(X_{i+1},\WW) \ar[r] & \FF(M_{i+1}, \WW) \ar[r] & \FF(M_{i+2},\WW) \\
 & \FF(M_i,\WW) \ar[u] \ar[r] & \FF(M_{i+1}, \WW) \ar[r]\ar@{=}[u] & \FF(M_{i+2},\WW)\ar@{=}[u]
}
\]
Since given complex is $\FF(-,\WW)$-exact, the bottom row is exact. One can easily show that $M_{i+2} \to M_{i+1} \to X_{i+1}$ is a cokernel diagram, which implies that the top row is also exact. Thus we see that $\FF(M_i, \WW) \to \FF(X_{i+1},\WW)$ is surjective, which shows that $\Ext^1_\FF(X_i,\WW)=0$. Since $\Ext^j_\FF(X_i,\WW) = \Ext^1_\FF(X_{i+j-1},\WW) =0$ for $j \geq 1$ by the dimension shift, it follows that $X_i$ is in $^\perp\WW$.
\end{proof}
\end{lemma}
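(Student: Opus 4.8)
The plan is to reduce, by a standard dimension-shift, the whole statement to the vanishing of $\Ext^1_\FF(X_i,W)$ for a single object $W\in\WW$, and then to extract that vanishing from the $(\mod\CC)(-,\WW)$-exactness hypothesis by a diagram chase carried out in the ambient abelian category $\Mod\CC$. Write $\FF:=\mod\CC$ and fix $W\in\WW$ (this is enough since $\WW$ is additive); the goal is $\Ext^{>0}_\FF(X_i,W)=0$ for all $i\in\Z$. I will use freely that the conflations $X_{i+1}\infl M_i\defl X_i$ are short exact sequences in $\Mod\CC$, that $\Ext_\FF$ agrees with the Yoneda $\Ext$ of $\Mod\CC$ on such modules (both are computed from resolutions by objects of $\proj\CC$), and that the differential $M_{i+1}\to M_i$ of the complex is the composite $M_{i+1}\xrightarrow{p}X_{i+1}\xrightarrow{\iota}M_i$ of the deflation and inflation supplied by the decomposition into conflations, with $\ker(p)=X_{i+2}=\im(M_{i+2}\to M_{i+1})$.

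First I would carry out the dimension shift. Applying the long exact sequence of $\Ext_\FF(-,W)$ to $X_{i+1}\infl M_i\defl X_i$ and using $\Ext^{>0}_\FF(M_i,W)=0$ (valid because $M_i\in{}^\perp\WW$), one obtains $\Ext^j_\FF(X_i,W)\cong\Ext^{j-1}_\FF(X_{i+1},W)$ for all $j\ge 2$, hence $\Ext^j_\FF(X_i,W)\cong\Ext^1_\FF(X_{i+j-1},W)$ for all $j\ge 1$. So it suffices to show $\Ext^1_\FF(X_i,W)=0$ for every $i$.

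For the latter, the same long exact sequence, together with $\Ext^1_\FF(M_i,W)=0$, identifies $\Ext^1_\FF(X_i,W)$ with $\coker\bigl(\FF(M_i,W)\xrightarrow{\ \alpha\ }\FF(X_{i+1},W)\bigr)$, where $\alpha$ is restriction along $\iota\colon X_{i+1}\infl M_i$; so everything reduces to proving $\alpha$ surjective. The key observation is that $X_{i+1}$ is a cokernel: in $\Mod\CC$ the image of the differential $M_{i+2}\to M_{i+1}$ equals $X_{i+2}=\ker(M_{i+1}\xrightarrow{p}X_{i+1})$, so $M_{i+2}\to M_{i+1}\xrightarrow{p}X_{i+1}\to 0$ is exact. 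Applying the left exact functor $\FF(-,W)$ then identifies $\FF(X_{i+1},W)$, via precomposition with $p$, with $\ker\bigl(\FF(M_{i+1},W)\to\FF(M_{i+2},W)\bigr)$; and by $\FF(-,\WW)$-exactness of the complex at the spot $M_{i+1}$ this kernel equals the image of precomposition with the differential $q\colon M_{i+1}\to M_i$. Hence, given $\phi\colon X_{i+1}\to W$, there is $\chi\colon M_i\to W$ with $\phi\circ p=\chi\circ q=\chi\circ\iota\circ p$, and cancelling the epimorphism $p$ yields $\phi=\chi\circ\iota=\alpha(\chi)$. Thus $\alpha$ is onto, $\Ext^1_\FF(X_i,W)=0$, and the dimension shift concludes the proof.

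I expect the only delicate point to be the bookkeeping that ties the differentials of the complex to the deflations and inflations of the decomposition into conflations — concretely, verifying $q=\iota\circ p$ and that $M_{i+2}\to M_{i+1}\to X_{i+1}\to 0$ really is exact in $\Mod\CC$. No homological subtlety arises; in particular, the self-orthogonality of $\WW$ plays no role in this lemma — only that each $M_i$ lies in ${}^\perp\WW$ and that the complex is $\FF(-,\WW)$-exact.
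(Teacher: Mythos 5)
Your proposal is correct and follows essentially the same route as the paper's proof: reduce to $\Ext^1$ by dimension shift, identify $\Ext^1_\FF(X_i,\WW)$ as the cokernel of restriction along $X_{i+1}\infl M_i$, and deduce surjectivity of that restriction from the exactness of $M_{i+2}\to M_{i+1}\to X_{i+1}\to 0$ together with the $\FF(-,\WW)$-exactness hypothesis. Your closing remark that self-orthogonality of $\WW$ is not used is also consistent with the paper's argument.
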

\begin{proof}[Proof of Theorem \ref{rec2}]
(1) $\Rightarrow$ (2): We may assume that $\EE=\XXX_\WW$, and in this case $\II$ coincides with $\add\WW$ by Proposition \ref{xxww}(5). Suppose that
\[
\cdots \to P_2 \to P_1 \to P_0 \to I^0 \to I^1 \to I^2 \to \cdots
\]
 is a complex in $\XXX_\WW$ satisfying the condition. Since $\EE(\PP,-)$-exactness is equivalent to exactness in $\Mod\PP$, we can decompose it into short exact sequences $X_{i+1} \infl P_i \defl X_i$ and $X^i \infl I^i \defl X^{i+1}$ for $i \geq 0$ with $X_0 = X^0$. Since the $X_0$ is obviously in $\mod\CC$ and $\mod\CC \subset \Mod\CC$ is thick, all $X_i$ and $X^i$ are in $\mod\CC$ for $i \geq 0$.
 
By Lemma \ref{lemmaperp}, $X^i$ is in $^\perp\WW$ for all $i \geq 0$. Thus the conflations $X^i \infl I^i \defl X^{i+1}$, $X^{i+1} \infl I^{i+1} \defl X^{i+2}$, $\cdots$ imply that $X^i$ is in $\XXX_\WW$ for $i \geq 0$. Since $\XXX_\WW \subset \mod\CC$ is resolving, $X_i$ is also in $\XXX_\WW$ for $i \geq 0$, which proves that this complex decomposes into conflations in $\XXX_\WW$.

(2) $\Rightarrow$ (1): Suppose that (2) holds. Since the Morita embedding $\P :\EE \to \XXX_\WW$  in Theorem \ref{pe} is fully faithful for $\WW = \P(\II)$, it suffices to show that it is essentially surjective. Let $X$ be an object in $\XXX_\WW$. Since $X$ is in $\mod\PP$ and in $\XXX_\WW$, there exists a complex 
\begin{equation}\label{pi1}
\cdots \to P_2 \to P_1 \to P_0 \to I^0 \to I^1 \to I^2 \to \cdots
\end{equation}
 with $P_i$ in $\PP$ and $I^i$ in $\II$ for $i \geq 0$ satisfying the following properties: 
\begin{equation}\label{pi2}
 \cdots \to \P P_2 \to \P P_1 \to \P P_0 \to \P I^0 \to \P I^1 \to \P I^2 \to \cdots
\end{equation}
 is exact, $X$ is the cokernel of $\P P_1 \to \P P_0$ and all the kernels of $\P I^i \to \P I^{i+1}$ are in $^\perp\WW$ for $i \geq 0$. Decompose (\ref{pi2}) into short exact sequences $X_{i+1} \infl \P P_i \defl X_i$ and $X^i \infl \P I^{i} \defl X^{i+1}$ in $\Mod\PP$ for $i \geq 0$ with $X_0 = X = X^0$. Since $X$ is in $\XXX_\WW \subset \mod\PP$ and $\mod\PP$ is thick in $\Mod\PP$, it follows that $X_i$ and $X^i$ are in $\mod\PP$ for all $i \geq 0$. Moreover, $X^i$ is clearly in $\XXX_\WW$ for $i \geq 0$. We have that $X_i$ is in $\XXX_\WW$ for $i \geq 0$ since $\XXX_\WW$ is a resolving subcategory of $\mod\PP$. It follows that (\ref{pi2}) is $(\mod\PP)(\P(\PP),-)$-exact and $(\mod\PP)(-,\WW)$-exact, which shows that (\ref{pi1}) is $\EE(\PP,-)$-exact and $\EE(-,\II)$-exact. 
Hence by (2) the complex (\ref{pi1}) can be decomposed into conflations in $\EE$. Since $\P$ is an exact functor, it immediately follows that $X$ is in $\P(\EE)$.
\end{proof}

In the case of Frobenius categories, this gives an internal characterization of categories of Gorenstein projective modules $\GP \CC$.
\begin{corollary}\label{rec3}
Let $\EE$ be a skeletally small exact category. The following are equivalent.
\begin{enumerate}
\item There exists a skeletally small additive category $\CC$ such that $\EE$ is exact equivalent to $\GP \CC$.
\item $\EE$ is an idempotent complete Frobenius category, and any $\EE(\PP,-)$-exact and $\EE(-,\PP)$-exact complexes \[
\cdots \to P_2 \to P_1 \to P_0 \to P_{-1} \to P_{-2} \to \cdots
\]
in $\PP$ decomposes into conflations in $\EE$.
\end{enumerate}
\end{corollary}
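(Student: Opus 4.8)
The plan is to deduce this corollary directly from Theorem \ref{rec2} by specializing to the Frobenius situation, where $\PP$ and $\II$ coincide and the Wakamatsu tilting subcategory produced by the Morita embedding becomes exactly $\proj\PP$.

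For $(1)\Rightarrow(2)$, I would assume $\EE$ is exact equivalent to $\GP\CC = \XXX_\WW$ with $\WW = \proj\CC$. By Proposition \ref{xxww} the category $\XXX_\WW$ has enough projectives and injectives; its projective objects are precisely those in $\proj\CC$ by part (4), and its injective objects are precisely those in $\add\WW = \proj\CC$ by part (5), so $\PP(\GP\CC) = \II(\GP\CC)$ and $\GP\CC$ is Frobenius. It is idempotent complete because it is closed under summands in $\mod\CC$ by Proposition \ref{xxww}(1) and $\mod\CC$ is idempotent complete by Proposition \ref{mod}. Since $\II(\GP\CC) = \PP(\GP\CC)$, condition (2) of Theorem \ref{rec2}, read for $\EE = \GP\CC$, becomes exactly the statement that every $\EE(\PP,-)$-exact and $\EE(-,\PP)$-exact complex in $\PP$ decomposes into conflations; hence Theorem \ref{rec2}$(1)\Rightarrow(2)$ gives (2).

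For $(2)\Rightarrow(1)$, I would put $\PP := \PP(\EE) = \II(\EE)$ and observe that, because $\II = \PP$, a complex of the shape $\cdots \to P_2 \to P_1 \to P_0 \to I^0 \to I^1 \to \cdots$ appearing in Theorem \ref{rec2}(2) is nothing but a doubly infinite complex with all terms in $\PP$ (relabel $I^j := P_{-1-j}$), and the two exactness conditions there, $\EE(\PP,-)$-exact and $\EE(-,\II)$-exact, are precisely $\EE(\PP,-)$-exact and $\EE(-,\PP)$-exact. Thus our hypothesis is exactly condition (2) of Theorem \ref{rec2}, and Theorem \ref{rec2}$(2)\Rightarrow(1)$ produces $\CC$ and a Wakamatsu tilting $\WW \subset \mod\CC$ with $\EE \equi \XXX_\WW$. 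To finish, I would trace through the proof of that implication: one may take $\CC = \PP$ and $\WW = \P(\II)$ for the Morita embedding $\P : \EE \to \mod\PP$; since $\II = \PP$ we get $\WW = \P(\PP)$, and because $\EE$ is idempotent complete, $\PP(\EE)$ is closed under summands and finite sums, so $\P$ identifies $\PP$ with the representable $\PP$-modules and their summands, i.e. $\P(\PP) = \proj\PP$. By Definition \ref{gpc}, $\XXX_{\proj\PP} = \GP\PP$, so $\EE$ is exact equivalent to $\GP\PP$ with $\PP$ skeletally small, proving (1).

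The only point that needs care -- and the step I expect to be the main (minor) obstacle -- is this last identification $\P(\II) = \proj\PP$ in the Frobenius case, which is what upgrades the generic Wakamatsu tilting subcategory supplied by Theorem \ref{rec2} to the specific subcategory $\proj\PP$ defining $\GP\PP$; one must invoke idempotent completeness of $\EE$ to know $\PP(\EE)$ is closed under summands, so that the essential image of $\PP$ under $\P$ is all of $\proj\PP$ and not merely an additive generating subcategory of it. Everything else is a direct translation of Theorem \ref{rec2} together with Propositions \ref{mod} and \ref{xxww}.
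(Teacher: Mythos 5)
Your proposal is correct and follows exactly the route the paper intends: Corollary \ref{rec3} is the specialization of Theorem \ref{rec2} to the Frobenius case, where $\II=\PP$ turns the complex of Theorem \ref{rec2}(2) into a doubly infinite complex in $\PP$, and where $\P(\II)=\P(\PP)=\proj\PP$ (using idempotent completeness of $\PP$, as you note) identifies $\XXX_\WW$ with $\GP\PP$ via Definition \ref{gpc}. The one step you flag as needing care is indeed the only nontrivial point, and your justification of it is sound.
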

Note that this class of Frobenius categories $\EE$ such that the image of the Morita embedding $\P:\EE \to \mod\PP$ coincides with $\GP \PP$ are called \emph{standard} in \cite{chen}. The above corollary gives an intrinsic characterization of standard Frobenius categories.
Also the category $\XXX_W$ for a Wakamatsu tilting $\Lambda$-module $W$ and the category $\GP \Lambda$ for a ring $\Lambda$ can be characterized by similar conditions to Theorem \ref{rec2}(2) and Corollary \ref{rec3}(2), which we leave to the reader.

\section{Exact categories with enough projectives and injectives and higher kernels}
In this section we study the special class of Wakamatsu tilting subcategories, \emph{cotilting subcategories}, and study its relationship with \emph{higher kernels}. More precisely, we show that an exact category $\EE$ with enough projectives and injectives is equivalent to $\XXX_\WW$ for a cotilting subcategory $\WW$ of a module category if and only if $\EE$ has higher kernels.
\subsection{Cotilting subcategories}
First we introduce the notion of $n$-cotilting subcategories, which is a generalization of cotilting modules, as we will see in Proposition \ref{wakamatsucotilting} below.
\begin{definition}
Let $\EE$ be an exact category with enough projectives and $\WW$ an additive subcategory of $\EE$. We say that $\WW$ is an \emph{$n$-cotilting subcategory} if it satisfies the following conditions.
\begin{enumerate}
\item For every $W \in \WW$, we have $\id W \leq n$, that is, $\Ext_\EE^{>n}(-,W)=0$.
\item $\WW$ is self-orthogonal, that is, $\Ext^{>0}_\EE(\WW,\WW)=0$.
\item The categories $^\perp\WW$ and $\XXX_\WW$ coincide (see Definition \ref{xxxww} for the category $\XXX_\WW$).
\end{enumerate}
Note that an $n$-cotilting subcategory is always Wakamatsu tilting.
\end{definition}

Next we study basic properties of cotilting subcategories.
For a subcategory $\XX$ of an exact category $\EE$, we denote by $\widehat{\XX}^n$ the subcategory of $\EE$ consisting of all objects $M$ such that there exists a complex which decomposes into conflations in $\EE$
\[
0 \to X_n \to X_{n-1} \to \cdots \to X_0 \to M \to 0,
\]
where $X_i$ is in $\XX$ for $0 \leq i \leq n$. We write $\widehat{\XX}$ for the subcategory of $\EE$ whose objects are those in $\widehat{\XX}^n$ for some $n$. We set $\widehat{\XX}^0 = \XX$ and $\widehat{\XX}^n = 0$ for $n < 0$.

\begin{proposition}\label{cotiltingcond}
Let $\EE$ be an exact category with enough projectives and $\WW$ an additive self-orthogonal subcategory of $\EE$. For an integer $n\geq 0$, the following are equivalent.
\begin{enumerate}
\item $\WW$ is an $n$-cotilting subcategory of $\EE$.
\item $\widehat{\XXX_\WW}^n = \EE$.
\item $\XXX_\WW$ contains all projective objects, and for any complex which decomposes into conflations
 \[
0 \to X_n \to X_{n-1} \to \cdots \to X_0 \to M \to 0,
\]
if $X_i$ is in $^\perp\WW$ for $0\leq i < n$, then $X_n$ is in $\XXX_\WW$.
\end{enumerate}
\end{proposition}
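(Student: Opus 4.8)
The plan is to prove the cycle of implications $(1) \Rightarrow (2) \Rightarrow (3) \Rightarrow (1)$, using the Auslander--Buchweitz-type approximation machinery (Appendix A) and the structural properties of $\XXX_\WW$ established in Proposition \ref{xxww}.

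\textbf{Implication $(1) \Rightarrow (2)$.} Assume $\WW$ is $n$-cotilting, so $^\perp\WW = \XXX_\WW$ and every $W \in \WW$ has injective dimension $\leq n$. Given any $M \in \EE$, I want to produce a complex $0 \to X_n \to X_{n-1} \to \cdots \to X_0 \to M \to 0$ decomposing into conflations with all $X_i \in \XXX_\WW$. The idea is to take a projective resolution of $M$ in $\EE$, say decomposing into conflations $\Omega^{i+1} M \infl P_i \defl \Omega^i M$ with $\Omega^0 M = M$, and observe that each $P_i$ lies in $\XXX_\WW$ (since $\XXX_\WW$ contains all projectives, using that $\WW$ is Wakamatsu tilting). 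Since $\XXX_\WW$ is resolving in $\EE$ by Proposition \ref{xxww}(2), each syzygy $\Omega^i M$ is built from objects of $\XXX_\WW$. The key point is that $\Omega^n M \in \XXX_\WW$: for this I would use that $\Ext^{>n}_\EE(-,W) = 0$ forces $\Ext^{>0}_\EE(\Omega^n M, \WW) = 0$ by dimension shift, hence $\Omega^n M \in {}^\perp\WW = \XXX_\WW$. Then $0 \to \Omega^n M \to P_{n-1} \to \cdots \to P_0 \to M \to 0$ is the desired complex, giving $M \in \widehat{\XXX_\WW}^n$; the reverse inclusion $\widehat{\XXX_\WW}^n \subseteq \EE$ is trivial.

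\textbf{Implication $(2) \Rightarrow (3)$.} First, $\XXX_\WW$ contains all projectives: a projective $P$ of $\EE$ lies in $\widehat{\XXX_\WW}^n$, and since it is projective, the resolving (in particular summand-closed) property of $\XXX_\WW$ together with splitting of the conflations forces $P \in \XXX_\WW$. (Alternatively, since $\widehat{\XXX_\WW}^n = \EE$ includes $P$ and one can split off the resolution termwise.) For the main assertion, suppose $0 \to X_n \to X_{n-1} \to \cdots \to X_0 \to M \to 0$ decomposes into conflations with $X_i \in {}^\perp\WW$ for $0 \leq i < n$. By hypothesis $M \in \widehat{\XXX_\WW}^n$, so there is \emph{another} complex $0 \to Y_n \to \cdots \to Y_0 \to M \to 0$ decomposing into conflations with all $Y_i \in \XXX_\WW$. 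Now I would invoke a comparison / Schanuel-type lemma for such resolutions (the exact-category analogue developed in Appendix A): comparing the two $(n)$-fold "coresolutions by $^\perp\WW$-objects versus $\XXX_\WW$-objects" of $M$, one gets that $X_n$ differs from $Y_n$ (which is in $\XXX_\WW$) by direct summands of objects in $\XXX_\WW$, up to the intermediate terms, and using that $\XXX_\WW$ is closed under kernels of deflations and summands (Proposition \ref{xxww}(1)) concludes $X_n \in \XXX_\WW$. This is the step I expect to be the main obstacle: making the comparison argument precise requires either a generalized Schanuel lemma in the exact setting or an induction on $n$ peeling off one conflation at a time while controlling $\Ext^{>0}(-,\WW)$, and one must be careful that the intermediate terms $X_i$ are only assumed in $^\perp\WW$, not in $\XXX_\WW$. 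The cleanest route is probably induction on $n$: the case $n=0$ is immediate; for the inductive step, factor off $X_0 \to M$ and reduce to a shorter complex, using that $^\perp\WW$ is closed under the relevant operations and that $\XXX_\WW$ is thick in $^\perp\WW$.

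\textbf{Implication $(3) \Rightarrow (1)$.} It remains to check the two defining conditions of an $n$-cotilting subcategory that are not already assumed (self-orthogonality is a standing hypothesis). For $^\perp\WW = \XXX_\WW$: the inclusion $\XXX_\WW \subseteq {}^\perp\WW$ holds by definition, so I only need $^\perp\WW \subseteq \XXX_\WW$. Given $M \in {}^\perp\WW$, take the trivial one-term "complex" (or rather iterate taking $\WW$-approximations): using enough projectives and condition (3) applied cleverly, I would build conflations realizing $M$ as $X_n$ in a complex with earlier terms in $^\perp\WW$ (for instance using coresolutions by injectives/$\WW$, which exist because $\XXX_\WW$ has enough injectives equal to $\add\WW$ — wait, this needs $M \in \XXX_\WW$ first, so instead I argue directly: apply (3) to the length-$n$ complex $0 \to M \to M \to 0 \to \cdots$ suitably padded, or use that any $M \in {}^\perp\WW$ trivially sits as $X_0$ with $M \to M$). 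Concretely: take $X_0 = \cdots = X_{n-1}$ built from $M$ via identity maps isn't a conflation decomposition; the honest approach is to note $M \in {}^\perp\WW \subseteq \widehat{\XXX_\WW}$ is what we want, so instead I use condition (3) with the complex where $X_i = 0$ for $i < n$ forces $X_n = M \in \XXX_\WW$ once we know such a complex (namely $0 \to M \to 0 \to \cdots \to 0 \to 0 \to 0$, i.e., $M$ itself with $X_n = M$) — here $X_i = 0 \in {}^\perp\WW$ for $i<n$ vacuously, and this "complex" decomposes into (split) conflations, so (3) gives $M \in \XXX_\WW$ directly. For $\id W \leq n$ for $W \in \WW$: since $\XXX_\WW$ contains all projectives and $^\perp\WW = \XXX_\WW$, take a projective resolution of any $N \in \EE$; its $n$-th syzygy lies in $\XXX_\WW = {}^\perp\WW$, hence $\Ext^{>n}_\EE(N, W) = \Ext^{>0}_\EE(\Omega^n N, W) = 0$, giving $\id W \leq n$. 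This direction should be routine once the right "trivial complex" trick is used for the first condition.
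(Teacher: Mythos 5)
Your cycle $(1)\Rightarrow(2)\Rightarrow(3)\Rightarrow(1)$ is the reverse of the paper's $(1)\Rightarrow(3)\Rightarrow(2)\Rightarrow(1)$, and this is what creates your main difficulty: it forces you to prove $(2)\Rightarrow(3)$ directly, which is where all the content of the proposition sits. Your $(1)\Rightarrow(2)$ and the $\id W\le n$ half of $(3)\Rightarrow(1)$ are correct dimension-shift arguments matching the paper's. The genuine gap is $(2)\Rightarrow(3)$. The Schanuel-type comparison you propose will not work: neither of the two resolutions of $M$ is by projectives, so there is no Schanuel isomorphism available in a general exact category, and even granting one, $X_n$ would appear as a summand of a direct sum involving alternating terms $X_{n-1},X_{n-3},\dots$, which are only assumed to lie in $^\perp\WW$, so summand-closure of $\XXX_\WW$ cannot conclude $X_n\in\XXX_\WW$. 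The missing idea is to first extract from (2) the two assertions of (1), namely $\id\WW\le n$ (by dimension shift along an $\XXX_\WW$-resolution of length $n$) and, crucially, $^\perp\WW=\XXX_\WW$: for $M\in{}^\perp\WW$, all intermediate cokernels of its $\XXX_\WW$-resolution lie in $^\perp\WW$ because $^\perp\WW$ is resolving, and then Proposition \ref{xxww}(1) --- $\XXX_\WW$ is closed under cokernels of inflations \emph{inside} $^\perp\WW$ --- lets you walk back up the resolution to get $M\in\XXX_\WW$. Once these are in hand, (3) is immediate: dimension shift gives $\Ext^{>0}_\EE(X_n,\WW)=\Ext^{>n}_\EE(M,\WW)=0$, so $X_n\in{}^\perp\WW=\XXX_\WW$. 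Your parenthetical appeal to ``$\XXX_\WW$ is thick in $^\perp\WW$'' is exactly the right tool, but it must be applied to the single resolution supplied by (2), not to a comparison of two resolutions; the naive induction on $n$ you sketch also fails because the length-$(n-1)$ statement is not available as an inductive hypothesis.

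There is a second, smaller error in $(3)\Rightarrow(1)$: the ``trivial complex'' $0\to M\to 0\to\cdots\to 0$ does not decompose into conflations unless $M=0$, since the map $M\to 0$ would have to be an inflation and inflations are monomorphisms. The trick is salvageable: take $X_n=X_{n-1}=M$ joined by the identity and $X_i=0$ for $i\le n-2$ with target $0$; the conflations $M\xrightarrow{1_M}M\defl 0$ and $0\infl 0\defl 0$ give a legitimate decomposition with $X_i\in{}^\perp\WW$ for $0\le i<n$, and (3) then yields $M\in\XXX_\WW$ (for $n=0$ condition (3) directly forces $\EE=\XXX_\WW$). With these two repairs your argument closes up, but as written both the central implication and the final one have holes.
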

\begin{proof}
(1) $\Rightarrow$ (3):
All projective objects are clearly contained in $^\perp\WW$, thus in $\XXX_\WW$. Let 
 \[
0 \to X_n \to X_{n-1} \to \cdots \to X_0 \to M \to 0
\]
be a complex which decomposes into conflations with $X_i$ is in $^\perp\WW$ for $0\leq i < n$. The dimension shift argument shows that $\Ext^{>0}_\EE(X_n,\WW)=\Ext^{>n}_\EE(M,\WW)=0$ since $\id \WW \leq n$. Hence $X_n$ is in $^\perp\WW = \XXX_\WW$, which shows (3).

(3) $\Rightarrow$ (2): This is clear since $\EE$ has enough projectives and all projectives are in $\XXX_\WW$.

(2) $\Rightarrow$ (1):
For any $M$ in $\EE= \widehat{\XXX_\WW}^n$, there exist conflations
\begin{equation}\label{bunkai}
X_n \infl X_{n-1} \defl M_{n-1},\quad M_{n-1} \infl X_{n-2} \defl M_{n-2},\quad \cdots,\quad M_1 \infl X_0 \defl M
\end{equation}
 where $X_i$ is in $\XXX_\WW$ for $0 \leq i \leq n$. The dimension shift argument shows that $\Ext^{>n}_\EE(M,\WW) = \Ext^{>0}_\EE(X_n,\WW) $ $= 0$, which proves that $\id \WW \leq n$. 

It remains to check that $^\perp\WW = \XXX_\WW$. If $M$ is in $^\perp\WW$, then all terms in (\ref{bunkai}) are in $^\perp\WW$ because $^\perp\WW$ is resolving in $\EE$. On the other hand, according to Proposition \ref{xxww}, the category $\XXX_\WW$ is closed under cokernels of inflations in $^\perp\WW$. This clearly implies that $M_{n-1},\cdots,M_1,M$ are in $\XXX_\WW$, thus we have $^\perp\WW = \XXX_\WW$.
\end{proof}

Let us describe the relation between our cotilting subcategories and classical cotilting modules.
The notion of cotilting modules over artin $R$-algebras is the dual notion of \emph{tilting} modules, and both are widely studied in the representation theory of algebras. Let $\Lambda$ be an artin $R$-algebra and $U$ a finitely generated $\Lambda$-module. Then $U$ is called a  \emph{cotilting module} if it satisfies the following conditions.
\begin{enumerate}
\item $\id U_\Lambda$ is finite.
\item $U$ is self-orthogonal.
\item $D \Lambda$ belongs to $\widehat{\add U}$.
\end{enumerate}

The following statement illustrates the relation between our definition and the classical one.
\begin{proposition}\label{wakamatsucotilting}
Let $\Lambda$ be an artin $R$-algebra and $\WW $ a subcategory of $\mod\Lambda$ satisfying $\add\WW = \WW$. For an integer $n \geq 0$, the following are equivalent.
\begin{enumerate}
\item $\WW$ is an $n$-cotilting subcategory of $\mod\Lambda$.
\item There exists a cotilting $\Lambda$-module $U$ with $\id U \leq n$ such that $\WW = \add U$.
\end{enumerate}
\end{proposition}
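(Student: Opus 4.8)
The plan is to show that the two conditions defining our $n$-cotilting subcategory $\WW$ match the three conditions defining a classical cotilting module once we observe that a Krull--Schmidt subcategory $\WW$ of $\mod\Lambda$ with $\add\WW = \WW$ is of the form $\add U$ if and only if $\WW$ has only finitely many indecomposable objects. The nontrivial direction is $(1)\Rightarrow(2)$: starting from an $n$-cotilting subcategory $\WW$ we must produce a single module $U$ with $\WW = \add U$, i.e. we must prove $\WW$ is of finite type. For the converse $(2)\Rightarrow(1)$ we just need to translate condition (3) of the classical definition into our condition (3), namely $^\perp\WW = \XXX_\WW$; here the module category $\mod\Lambda$ has enough projectives and injectives, $D\Lambda$ is the injective cogenerator, and Proposition \ref{cotiltingcond} does the work.

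For $(2)\Rightarrow(1)$: suppose $U$ is a cotilting module with $\id U \le n$ and $\WW = \add U$. Conditions (1) and (2) of our definition are immediate. For condition (3), the hypothesis $D\Lambda \in \widehat{\add U}$ means that for every injective $\Lambda$-module there is a finite coresolution by $\add U = \WW$; since every module $M$ embeds into an injective with injective cokernel, splicing gives that $M \in \widehat{\XXX_\WW}^{\,n}$ (using that $\add U \subseteq \XXX_\WW$ and that the terms of a coresolution by injectives are in $^\perp\WW$ because $\WW$ is self-orthogonal and $\id\WW\le n$). Hence $\widehat{\XXX_\WW}^{\,n} = \mod\Lambda$, and Proposition \ref{cotiltingcond} yields that $\WW$ is $n$-cotilting. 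More directly, $\widehat{\XXX_\WW}^{\,n}=\mod\Lambda$ combined with Proposition \ref{cotiltingcond}(2)$\Leftrightarrow$(1) gives exactly the equality $^\perp\WW = \XXX_\WW$ we need.

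For $(1)\Rightarrow(2)$: by Proposition \ref{cotiltingcond} we have $\widehat{\XXX_\WW}^{\,n} = \mod\Lambda$, so in particular $D\Lambda$ lies in $\widehat{\XXX_\WW}^{\,n}$. The injective objects of $\XXX_\WW$ are precisely the objects of $\add\WW = \WW$ by Proposition \ref{xxww}(5), so the coresolution of $D\Lambda$ inside the exact category $\XXX_\WW$ can be improved to one whose terms lie in $\WW$ (replacing injective objects of $\XXX_\WW$ by objects of $\WW$ via the standard Auslander--Buchweitz/relative resolution argument, or simply using that $\XXX_\WW$ has enough injectives $\WW$). This shows $D\Lambda \in \widehat{\WW}^{\,n}$; together with $\id\WW\le n$ and self-orthogonality of $\WW$ we recover all three classical cotilting conditions for any module $U$ with $\add U = \WW$. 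It remains to produce such a $U$, i.e. to show $\WW$ has finitely many indecomposables. For this, take a minimal $\WW$-coresolution $0 \to D\Lambda \to W^0 \to \cdots \to W^n \to 0$; I claim $\WW = \add(W^0 \oplus \cdots \oplus W^n)$. Indeed, for $V \in \WW$ one has $\Ext^{>0}_\Lambda(V,\WW)=0$, so applying $\Hom_\Lambda(V,-)$ to this coresolution shows the complex $\Hom_\Lambda(V,W^0)\to\cdots\to\Hom_\Lambda(V,W^n)$ has homology only $\Hom_\Lambda(V, D\Lambda) = D(V) \ne 0$ at the left, and a counting/minimality argument (this is the classical Bongartz-type completion argument for cotilting modules, e.g. as in \cite{ARS}) forces $V$ to be a summand of $\bigoplus_i W^i$.

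\textbf{Main obstacle.} The routine part is the bookkeeping with dimension shifts and the translation of condition (3); the genuine content is the finiteness claim in $(1)\Rightarrow(2)$, i.e. that an abstract $n$-cotilting subcategory of $\mod\Lambda$ is automatically of finite type and hence additively generated by a module. I expect this step to rely essentially on the artinian hypothesis (so that the coresolution of $D\Lambda$ is finite and Krull--Schmidt applies) and on the self-orthogonality to run the classical "a cotilting subcategory has at most $n(\Lambda)$ nonisomorphic indecomposable summands, realized inside a coresolution of the cogenerator" argument; citing \cite{ARS} for the module-level statement and upgrading it to subcategories is the cleanest route.
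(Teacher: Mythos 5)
The paper does not actually prove this proposition; it simply cites \cite[Theorem 5.4]{applications}, so you are attempting to supply an argument the author outsources entirely. Your overall strategy (match the two axiom systems via Proposition \ref{cotiltingcond}, then prove finite type) is the right one, but both directions have genuine gaps, and a recurring source of trouble is that you have the orientation of $\widehat{(-)}$ backwards: in this paper $M\in\widehat{\XX}^n$ means $M$ has a finite $\XX$-\emph{resolution} $0\to X_n\to\cdots\to X_0\to M\to 0$, so the classical condition $D\Lambda\in\widehat{\add U}$ is about resolutions of $D\Lambda$, not coresolutions. In $(2)\Rightarrow(1)$ the claim that ``the terms of a coresolution by injectives are in $^\perp\WW$'' is unjustified (there is no reason for $\Ext^{>0}_\Lambda(I,U)$ to vanish for $I$ injective), and the splicing you describe does not produce an $\XXX_\WW$-resolution of $M$. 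What is actually needed, and what you never establish, is the inclusion $^\perp U\subseteq \XXX_{\add U}$, i.e.\ that every $X\in{}^\perp U$ admits an $\add U$-coresolution with cosyzygies in $^\perp U$; this is the real content of the classical theorem and requires a pullback/approximation argument starting from $X\hookrightarrow I$ and the finite $\add U$-resolution of $I$. Once that is in place, $\widehat{\XXX_\WW}^n=\mod\Lambda$ follows from the projective resolution of any $M$ (since $\Omega^nM\in{}^\perp U$ by dimension shift and $\Lambda\in{}^\perp U$), and Proposition \ref{cotiltingcond} finishes.

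In $(1)\Rightarrow(2)$ the ``minimal $\WW$-coresolution $0\to D\Lambda\to W^0\to\cdots\to W^n\to 0$'' cannot exist nontrivially: $D\Lambda$ is injective, so the monomorphism $D\Lambda\to W^0$ splits and you would be forced to conclude $D\Lambda\in\WW$, which already fails for $U=\Lambda$ over a non-selfinjective hereditary algebra. (Also, ``the coresolution of $D\Lambda$ inside $\XXX_\WW$'' presupposes $D\Lambda\in\XXX_\WW$, which is false in general.) The correct object is a $\WW$-resolution $0\to W_n\to\cdots\to W_0\to D\Lambda\to 0$, which you can get cleanly from Proposition \ref{abcotilt}: $D\Lambda\in\XXX_\WW^{\perp}=\widehat{\WW}^n$ because $\Ext^{>0}_\Lambda(-,D\Lambda)=0$. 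The remaining, genuinely hard step --- that every $V\in\WW$ is a summand of $W_0\oplus\cdots\oplus W_n$ --- is exactly where you wave at ``a counting/minimality argument''. Your $\Hom_\Lambda(V,-)$ computation (applied to the correctly oriented sequence) only shows that $\Hom_\Lambda(V,W_0)\to\Hom_\Lambda(V,D\Lambda)$ is surjective; the induced monomorphism $V\to W_0^k$ does not split for free, since that needs $\coker(V\to W_0^k)\in{}^\perp\WW$ together with $V\in\WW={}^\perp\WW\cap({}^\perp\WW)^\perp$, and the first of these holds only when the lift is a left $\WW$-approximation. Since this finiteness statement is the whole point of the proposition, leaving it at that level is a gap; if you do not want to reprove it, cite \cite[Theorem 5.4]{applications} as the paper does.
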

\begin{proof}
We refer the reader to \cite[Theorem 5.4]{applications} for the proof.
\end{proof}

\subsection{Higher kernels and the main result}
Our aim is to characterize the exact category of the form $^\perp\WW =\XXX_\WW$ for some $n$-cotilting subcategory $\WW$ of a module category $\mod\CC$.
In this subsection \emph{we assume that $\EE$ is a skeletally small exact category with enough projectives $\PP$ and enough injectives $\II$}. Recall that we have the Morita embedding (\ref{moritaembedding}) $\P :\EE \to \XXX_\WW$ with $\WW = \P(\II)$ by Theorem \ref{pe}.

The following result gives a criterion for this embedding $\P: \EE \to \XXX_\WW$ to be dense up to summands.
A similar method was used in the proof of \cite[Theorem 2.7]{kal}. We simplify the proof by using the analogue of Auslander-Buchweitz approximation for exact categories, which we refer to Appendix A.
\begin{proposition}\label{xispe}
Let $\EE$ be a skeletally small exact category with enough projectives $\PP$  and enough injectives $\II$. Consider the Morita embedding $\P:\EE \to \mod\PP$ and put $\WW := \P(\II)$. If $\widehat{\P(\EE)}^n = \mod\PP$ holds for some $n \geq 0$, then $\WW$ is an $n$-cotilting subcategory of $\mod\PP$ and we have $\XXX_\WW = \add \P(\EE)$. In this case, $\XXX_\WW = \P(\EE)$ holds if and only if $\EE$ is idempotent complete.
\end{proposition}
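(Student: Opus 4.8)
The plan is to treat the three assertions in turn: that $\WW$ is $n$-cotilting, that $\XXX_\WW = \add\P(\EE)$, and that $\XXX_\WW = \P(\EE)$ precisely when $\EE$ is idempotent complete.

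\emph{Step 1 ($\WW$ is $n$-cotilting).} By Theorem~\ref{pe}, $\WW = \P(\II)$ is already a Wakamatsu tilting subcategory of $\mod\PP$; in particular it is additive and self-orthogonal and $\P(\EE) \subseteq \XXX_\WW$. Since the operation $\widehat{(-)}^n$ is monotone in the subcategory, the hypothesis $\widehat{\P(\EE)}^n = \mod\PP$ together with $\P(\EE) \subseteq \XXX_\WW$ forces $\widehat{\XXX_\WW}^n = \mod\PP$. Applying the implication (2)$\Rightarrow$(1) of Proposition~\ref{cotiltingcond} in the ambient exact category $\mod\PP$ then shows that $\WW$ is an $n$-cotilting subcategory of $\mod\PP$; in particular $\XXX_\WW = {}^\perp\WW$.

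\emph{Step 2 ($\XXX_\WW = \add\P(\EE)$).} Since $\P(\EE) \subseteq \XXX_\WW$ and $\XXX_\WW$ is closed under summands (Proposition~\ref{xxww}(1)), the inclusion $\add\P(\EE) \subseteq \XXX_\WW$ is immediate; the content is the reverse inclusion, which I would obtain by recognizing $\add\P(\EE)$ as a left $\Ext$-orthogonal through Auslander-Buchweitz approximation. First I would check that $\XX := \add\P(\EE)$ is a resolving subcategory of $\mod\PP$: it is closed under summands by construction, and it is preresolving in $\mod\PP$ --- this follows from $\P(\EE)$ being preresolving in $\XXX_\WW$ (Theorem~\ref{pe}), since $\XXX_\WW$ and $\mod\PP$ have the same projective objects (Proposition~\ref{xxww}(4)) and the same conflations among objects of $\XXX_\WW$, and conditions (a)--(c) of Definition~\ref{defpreresol} then pass from $\P(\EE)$ to $\add\P(\EE)$ by routine direct-sum manipulations --- so Lemma~\ref{resol} shows $\XX$ is resolving. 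Next I would check that $\omega := \add\WW = \add\P(\II)$ is a relative injective cogenerator for $\XX$: it lies in $\XX$ since $\WW = \P(\II) \subseteq \P(\EE)$; it satisfies $\Ext^{>0}_{\mod\PP}(\XX,\omega) = 0$ since $\XX \subseteq \XXX_\WW \subseteq {}^\perp\WW$; applying $\P$ to an injective resolution in $\EE$ of a lift of a given object of $\XX$ and splitting off the complementary summand yields a conflation $X \infl W \defl X'$ with $W \in \omega$ and $X' \in \XX$; and $\omega$ is self-orthogonal and closed under summands. Finally $\widehat{\XX} \supseteq \widehat{\P(\EE)}^n = \mod\PP$, so $\widehat{\XX} = \mod\PP$.

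With these hypotheses verified, the exact-category Auslander-Buchweitz approximation theory of Appendix~A applies to the pair $(\XX,\omega)$ in $\mod\PP$ and yields the orthogonality description $\XX = \{\, M \in \widehat{\XX} : \Ext^{>0}_{\mod\PP}(M,\omega) = 0 \,\}$. Since $\widehat{\XX} = \mod\PP$ and ${}^\perp\omega = {}^\perp\WW$, this becomes $\add\P(\EE) = {}^\perp\WW = \XXX_\WW$, the last equality being Step~1. For the final assertion: if $\EE$ is idempotent complete then $\P(\EE)$ is closed under summands in $\XXX_\WW$ by Theorem~\ref{pe}, so $\P(\EE) = \add\P(\EE) = \XXX_\WW$; conversely, if $\XXX_\WW = \P(\EE)$ then $\P(\EE)$ is closed under summands in $\mod\PP$ (Proposition~\ref{xxww}(1)), hence idempotent complete, and therefore so is $\EE$, which is exact equivalent to $\P(\EE)$ by Proposition~\ref{preresol} (alternatively this is immediate from the last sentence of Theorem~\ref{pe}).

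\emph{Main obstacle.} The direct-sum verifications that $\add\P(\EE)$ is resolving and that $\add\WW$ is a relative injective cogenerator are routine. The one genuinely substantial input is the orthogonality characterization of $\XX$ coming from the Auslander-Buchweitz machinery of Appendix~A: this is the step that converts the existential hypothesis (every object of $\mod\PP$ admits \emph{some} finite $\P(\EE)$-resolution) into the closed-form identity $\XXX_\WW = \add\P(\EE)$, and is precisely why that appendix is developed.
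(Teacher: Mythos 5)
Your proof is correct and follows essentially the same route as the paper: Step 1 is identical (deduce $\widehat{\XXX_\WW}^n=\mod\PP$ from $\P(\EE)\subseteq\XXX_\WW$ and apply Proposition~\ref{cotiltingcond}), and Step 2 is the Auslander--Buchweitz argument of Appendix~A, which the paper packages as a single application of Corollary~\ref{icchi} to the pair $\XX=\XXX_\WW$, $\XX'=\P(\EE)$ while you unfold it via the orthogonality identity $\add\P(\EE)={}^\perp\WW$ of Corollary~\ref{icchi0} combined with $\XXX_\WW={}^\perp\WW$. The extra verifications you perform for $\add\P(\EE)$ are harmless but unnecessary, since Corollary~\ref{icchi0} already applies to the preresolving subcategory $\P(\EE)$ itself without first closing under summands.
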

\begin{proof}
Since we have $\P(\EE) \subset \XXX_\WW$ by Theorem \ref{pe}, the equality $\widehat{\P(\EE)}^n = \mod\PP$ implies $\widehat{\XXX_\WW}^n = \mod\PP$. Thus $\WW$ is $n$-cotilting by Proposition \ref{cotiltingcond}. On the other hand, recall that $\P(\EE)$ is preresolving in $\mod\PP$ and has enough injectives $\WW$. By Proposition \ref{xxww}, $\XXX_\WW$ is resolving in $\mod\PP$ and has enough injectives $\add\WW$. Applying Corollary \ref{icchi} to $\XX = \XXX_\WW$ and $\XX' = \P(\EE)$, we have $\XXX_\WW = \add\P(\EE)$.
The last assertion is clear.
\end{proof}

To investigate further properties of cotilting subcategories, we extend the notion of \emph{$n$-kernels} \cite{jasso} to $n \geq -1$, which simplify our results below.
\begin{definition}\label{nkernel}
Let $\CC$ be an additive category.
\begin{enumerate}[leftmargin=*]
 \item Let $n$ be an integer $n\geq 1$. We say that $\CC$ \emph{has $n$-kernels} if for any morphism $f:X\to Y$ in $\CC$, there exists a complex
 \[
 0 \to X_n \xrightarrow{f_n} \cdots \xrightarrow{f_2} X_1 \xrightarrow{f_1} X \xrightarrow{f} Y
 \]
  in $\CC$ such that the following diagram is exact.
  \[
 0 \to \CC(-,X_n) \xrightarrow{\CC(-,f_n)}  \cdots \xrightarrow{\CC(-,f_2)}  \CC(-,X_1) \xrightarrow{\CC(-,f_1)} \CC(-,X) \xrightarrow{\CC(-,f)}  \CC(-,Y)
  \]
 \item Suppose that $\CC$ is in addition an exact category.
 \begin{enumerate}
  \item We say that $\CC$ has \emph{$0$-kernels} if every morphism $f$ in $\CC$ can be factored as a deflation followed by a monomorphism, that is, $f=i g$ holds for a deflation $g$ and a monomorphism $i$. 
  \item We say that $\CC$ has \emph{$(-1)$-kernels} if every morphism can be factored as a deflation followed by an inflation.
 \end{enumerate}
\end{enumerate} 
Dually we define the notion of \emph{$n$-cokernels} for $n \geq -1$.
\end{definition}
Note that having $n$-kernels implies having $m$-kernels for $m \geq n \geq -1$. Also we point out that an exact category $\CC$ has $(-1)$-kernels if and only if $\CC$ is abelian and its exact structure is the usual exact structure on abelian categories.

We remark that having higher kernels is almost equivalent to the finiteness of the global dimension, as the following classical proposition shows.
\begin{proposition}\label{gldim}
Let $\CC$ be a skeletally small idempotent complete additive category with weak kernels and $n$ an integer $n \geq 1$. Then the following hold.
\begin{enumerate}
\item $\CC$ has $n$-kernels if and only if the global dimension of $\mod\CC$ is at most $n+1$.
\item Suppose that $\End_\CC(M)$ is right coherent for an object $M \in \CC$. Then $\add M$ has $n$-kernels if and only if the right global dimension of $\End_\CC(M)$ is at most $n+1$.
\end{enumerate}
\end{proposition}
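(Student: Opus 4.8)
The plan is to reduce the statement to the well-known homological interpretation of $\mod\CC$ as a functor category. First I would recall that under the hypotheses ($\CC$ skeletally small, idempotent complete, with weak kernels), Proposition \ref{wk} tells us that $\mod\CC$ is an abelian category, and by the Yoneda lemma the representable functors $\CC(-,C)$ exhaust (up to summands) the projective objects of $\mod\CC$, i.e. $\proj(\mod\CC)=\proj\CC$. The key translation is that for $X\in\CC$ and any $\CC$-module $M$, the functor $\mod\CC(\CC(-,X),-)$ evaluated at $M$ is $M(X)$; hence a complex $\CC(-,X_n)\to\cdots\to\CC(-,X_1)\to\CC(-,X)\to\CC(-,Y)$ is exact in $\mod\CC$ precisely when the original complex $0\to X_n\to\cdots\to X_1\to X\to Y$ becomes exact after applying every representable $\CC(-,Z)$. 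Thus ``$\CC$ has $n$-kernels'' says exactly that every morphism $f\colon X\to Y$ of projectives in $\mod\CC$ admits a projective resolution of its kernel of length $\leq n$, equivalently that for every morphism $f$ between projectives, $\pd_{\mod\CC}(\ker f)\leq n$.

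Next I would run the standard syzygy/dimension-shift argument. Any $\CC$-module $M$ has a presentation $\CC(-,X)\xrightarrow{f}\CC(-,Y)\to M\to 0$ coming from a morphism in $\CC$; since $\mod\CC$ is abelian, $\coker(f)$ makes sense, and $\pd M\leq \pd(\coker f)$... more precisely one shows $\pd_{\mod\CC}M \leq 1+\pd_{\mod\CC}(\Omega M)$ and $\Omega M$ is a submodule of a projective, in fact $\Omega M=\im(f)\subseteq \CC(-,Y)$, whose second syzygy is a kernel of a map between projectives. Feeding in the bound $\pd(\ker f)\le n$ gives $\pd M\le n+1$ for every $M$, i.e. $\operatorname{gl.dim}(\mod\CC)\le n+1$; this is the ``only if'' direction. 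For the ``if'' direction, assume $\operatorname{gl.dim}(\mod\CC)\le n+1$; given $f\colon X\to Y$ in $\CC$, the module $K:=\ker(\CC(-,f))$ sits in a projective presentation, so $\pd K\le n+1-2=n-1\le n$ — actually one computes that $K$ is a second syzygy, hence $\pd K\le n-1$, and truncating a projective resolution of $K$ of length $\le n-1$ and translating back along Yoneda produces the required exact complex $0\to\CC(-,X_n)\to\cdots\to\CC(-,X)\to\CC(-,Y)$ in $\mod\CC$, hence the $n$-kernel in $\CC$. The use of idempotent completeness is exactly to guarantee that the projective modules appearing in these resolutions are again representable, so that the resolution descends to a complex in $\CC$ rather than merely in $\mod\CC$.

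For the ``in particular'' clause, I would invoke the Morita-type dictionary: when $M\in\CC$ and $\Lambda:=\End_\CC(M)$ is right coherent, the functor $\CC(M,-)$ restricted to $\add M$ identifies $\mod(\add M)$ with $\mod\Lambda$ (this is the classical correspondence, together with the fact that right coherence of $\Lambda$ is precisely what makes finitely presented $\Lambda$-modules form an abelian category matching $\mod(\add M)$ in the strong sense of Definition \ref{modc}, via Proposition \ref{wk} applied to $\add M$). Under this equivalence the global dimension of $\mod(\add M)$ is the right global dimension of $\Lambda$, and ``$\add M$ has weak kernels'' corresponds to right coherence of $\Lambda$, so the first part applies verbatim.

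The main obstacle I anticipate is bookkeeping the exact value of the syzygy index in both directions — making sure the ``$-2$'' shift (a kernel of a map between projectives is a \emph{second} syzygy of the relevant cokernel) is handled so that $n$-kernels matches $\operatorname{gl.dim}\le n+1$ and not an off-by-one neighbour — and carefully justifying that all projectives produced are representable (not just projective modules) so that the resolution can be pulled back to $\CC$; this is where idempotent completeness is essential and must be used explicitly rather than silently.
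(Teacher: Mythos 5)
The paper offers no proof of this proposition (it is cited as classical), so there is nothing to compare against; judged on its own, your plan is the standard argument and is essentially correct. The one thing you must repair is the dictionary in your first paragraph: exactness of $0\to\CC(-,X_n)\to\cdots\to\CC(-,X_1)\to\CC(-,X)\xrightarrow{\CC(-,f)}\CC(-,Y)$ says that the $n$ terms $\CC(-,X_n),\dots,\CC(-,X_1)$ form a projective resolution of $K:=\ker\CC(-,f)$, i.e.\ $\pd K\le n-1$, not $\pd K\le n$. With the correct exponent the arithmetic closes in both directions: $\pd M\le \pd K+2\le n+1$ for any $M$ presented by a morphism of representables, whereas the bound $\pd K\le n$ you state before ``feeding in'' would only yield $n+2$. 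You use the right exponent in the converse and you flag this off-by-one yourself as the main hazard, so this is a correction rather than a missing idea.

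Two points deserve to be made explicit in a written version. In the converse direction you need the $(n-1)$-st syzygy of $K$ to be \emph{representable}: it lies in $\mod\CC$ because $\mod\CC$ is thick in $\Mod\CC$ (Proposition \ref{mod}), it is projective because $\pd K\le n-1$, projective objects of $\mod\CC$ are exactly those of $\proj\CC$ (Proposition \ref{mod} again), and idempotent completeness of $\CC$ upgrades ``summand of a representable'' to ``representable'' --- this is the chain you allude to and it is where the hypothesis is genuinely used. For the ``in particular'' clause, your Morita reduction identifies $n$-kernels in $\add M$ with the global dimension of $\mod\End_\CC(M)$ in the sense of Definition \ref{modc} (equivalently, of finitely presented modules, by coherence and Proposition \ref{wk}); for a coherent non-noetherian ring this supremum over finitely presented modules can be strictly smaller than the classical right global dimension over all modules, so either read the statement with that convention or observe that in the paper's applications $\End_\CC(M)$ is an artin algebra, where the two notions agree.
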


The following proposition gives a necessary condition for $\EE$ to be equivalent to $\XXX_\WW$ for an $n$-cotilting subcategory $\WW$ of a module category $\mod\CC$.

\begin{proposition}\label{cotiltker}
Let $\CC$ be a skeletally small additive category with weak kernels and $\WW$ an $n$-cotilting subcategory of $\mod\CC$ for $n \geq 0$. Then $\XXX_\WW$ has $(n-1)$-kernels. 
\end{proposition}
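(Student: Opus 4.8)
The plan is to work inside the module category $\FF := \mod\CC$, which is abelian by Proposition~\ref{wk} since $\CC$ has weak kernels, and to use the fact that for an $n$-cotilting subcategory we have $\XXX_\WW = {}^\perp\WW$. So I want to show: given a morphism $f\colon X\to Y$ in $\XXX_\WW = {}^\perp\WW$, there is a complex $0\to X_{n-1}\xrightarrow{f_{n-1}}\cdots\to X_1\xrightarrow{f_1} X\xrightarrow{f}Y$ in $\XXX_\WW$ which is exact after applying $\XXX_\WW(-,?)$; but since $\XXX_\WW$ is an exact (extension-closed) subcategory of the abelian category $\FF$ and its conflations are the short exact sequences of $\FF$ with all three terms in $\XXX_\WW$, it suffices to produce such a complex that is genuinely exact in $\FF$ at the spots $X_{n-1},\dots,X_1$ and such that $\ker f$ (computed in $\FF$) together with all the intermediate syzygies lie in $\XXX_\WW$. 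Concretely, letting $K = \ker f$ in $\FF$, I take a minimal-length resolution-type construction: choose a deflation (epimorphism in $\FF$) $X_1\defl K$ with $X_1\in\XXX_\WW$, then $X_2\defl \ker(X_1\to K)$, and so on. Because $\XXX_\WW$ is resolving in $\FF$ (Proposition~\ref{xxww}(2)), such deflations exist (it is closed under kernels of deflations and contains the projectives $\proj\CC$). The remaining task is to show this process \emph{stops}: the $(n-1)$-st syzygy $X_{n-1}$ — more precisely the kernel of $X_{n-1}\to X_{n-2}$ — is already in $\XXX_\WW$, i.e.\ the complex can be truncated to length $n-1$.

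For the termination I would argue by a dimension shift against $\WW$. Since $X\in{}^\perp\WW$ and $Y\in{}^\perp\WW$, the kernel $K=\ker f$ sits in a short exact sequence $0\to K\to X\to \im f\to 0$ with $\im f\subset Y$; and there is $0\to\im f\to Y\to \coker f\to 0$. From these and the long exact sequences of $\Ext_\FF^*(-,\WW)$ one computes that $\Ext^{>1}_\FF(K,\WW)=0$ (using $\id\WW\le n$ and $\Ext^{>0}_\FF(X,\WW)=\Ext^{>0}_\FF(Y,\WW)=0$), and more importantly that $K$ differs from an object of ${}^\perp\WW$ by a bounded amount. Iterating: each time we peel off one syzygy $X_i\in\XXX_\WW\subset{}^\perp\WW$, the relevant $\Ext$-vanishing range against $\WW$ increases by one. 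After $n-1$ steps the syzygy $\Omega^{n-1}K$ satisfies $\Ext^{>0}_\FF(\Omega^{n-1}K,\WW)=0$, so $\Omega^{n-1}K\in{}^\perp\WW$. Because $\XXX_\WW$ is closed under kernels of deflations in $\FF$ and in fact ${}^\perp\WW=\XXX_\WW$, this syzygy lies in $\XXX_\WW$, and the truncated complex $0\to \Omega^{n-1}K \to X_{n-1}\to\cdots\to X_1\to X\to Y$ is the desired $(n-1)$-kernel, each term in $\XXX_\WW$ and exact in $\FF$ hence decomposing into conflations in $\XXX_\WW$.

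The main obstacle I anticipate is bookkeeping the exact range of the $\Ext$-vanishing carefully enough to land precisely at length $n-1$ rather than $n$ or $n+1$: one must track that $K=\ker f$ already enjoys one extra degree of $\Ext$-vanishing against $\WW$ compared to a generic module (because both $X$ and $Y$, not just $X$, are $\WW$-orthogonal, so $\Ext^{j}_\FF(K,\WW)\cong\Ext^{j+1}_\FF(\coker f,\WW)$ for $j\ge 1$ via the two short exact sequences above, and $\coker f\in\FF$ has $\Ext^{>n}_\FF(\coker f,\WW)=0$). Once that is pinned down, the induction on the number of syzygies taken is routine, and the only other point to check — that the epimorphisms $X_{i+1}\defl\ker(X_i\to X_{i-1})$ in $\FF$ are deflations whose kernels stay in $\XXX_\WW$ — is immediate from the resolving property in Proposition~\ref{xxww}. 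A small additional remark: the case $n=0$ should be handled separately (or noted as trivial), since then $\WW$ is a cotilting subcategory with $\id\WW=0$, $\XXX_\WW={}^\perp\WW=\mod\CC$ (all of it), and the claim that $\XXX_\WW$ has $(-1)$-kernels just says $\mod\CC$ is abelian with its standard exact structure, which holds by Proposition~\ref{wk}.
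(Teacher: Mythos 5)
Your skeleton is the same as the paper's: pass to the abelian category $\FF=\mod\CC$, use $\XXX_\WW={}^\perp\WW$, peel off syzygies of $\ker f$, and terminate by dimension-shifting against $\WW$ using $\id\WW\le n$ together with the $\WW$-orthogonality of both $X$ and $Y$. The $n=0$ case and the termination estimate are essentially right (modulo the off-by-one you flagged: the two short exact sequences give $\Ext^j_\FF(\ker f,\WW)\cong\Ext^{j+2}_\FF(\coker f,\WW)$ for $j\ge 1$, so $n-2$ further syzygies suffice, which is exactly the length needed). But there is a genuine gap at the heart of the construction: you take \emph{arbitrary} epimorphisms $X_{i+1}\defl M_{i+1}$ from objects of $\XXX_\WW$ (e.g.\ from projectives). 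Definition \ref{nkernel} does not ask for a complex that is exact in $\FF$; it asks that $0\to\XXX_\WW(-,X_{n-1})\to\cdots\to\XXX_\WW(-,X)\to\XXX_\WW(-,Y)$ be exact as functors on $\XXX_\WW$. Exactness at $\XXX_\WW(Z,X_i)$ forces every morphism $Z\to M_{i+1}=\ker(X_i\to X_{i-1})$ with $Z\in\XXX_\WW$ to lift along $X_{i+1}\defl M_{i+1}$, and the obstruction lives in $\Ext^1_\FF(Z,M_{i+2})$. This group need not vanish: $\XXX_\WW$ is not self-orthogonal, and worse, the intermediate syzygies $M_1=\ker f,\dots,M_{n-2}$ are in general \emph{not} in $\XXX_\WW$ at all (the dimension shift only forces $M_{n-1}\in{}^\perp\WW$), contrary to your assertion that all intermediate syzygies lie in $\XXX_\WW$. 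For the same reason your appeal to the resolving property to keep $\ker(X_{i+1}\defl M_{i+1})$ inside $\XXX_\WW$ does not apply: that property concerns kernels of deflations whose \emph{target} is in $\XXX_\WW$, and $M_{i+1}$ is not.

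The missing ingredient is that each epimorphism onto a syzygy must be a \emph{right $\XXX_\WW$-approximation}, which makes the required lifting hold by definition (and renders membership of the intermediate syzygies in $\XXX_\WW$ irrelevant). This is exactly what the paper does: $\XXX_\WW$ is contravariantly finite in $\mod\CC$ by Corollary \ref{abcotilt} --- a nontrivial input coming from the Auslander--Buchweitz theory of Appendix A applied to the cotilting situation $\widehat{\XXX_\WW}^n=\mod\CC$ --- so one may take a right $\XXX_\WW$-approximation $X_1\to M_1$, observe it is epic because $\proj\CC\subset\XXX_\WW$, and iterate. With that replacement, and the corrected count of syzygies, your argument goes through and coincides with the paper's proof.
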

\begin{proof}
Recall that $\XXX_\WW = {}^\perp\WW$ since $\WW$ is $n$-cotilting. Also note that $\mod\CC$ is abelian by Proposition \ref{wk} since $\CC$ has weak kernels.
If $n=0$, that is, $\XXX_\WW = \mod\CC$, then $\XXX_\WW$ clearly has $(-1)$-kernels since $\mod\CC$ is abelian. 

Let $f:X \to Y$ be a morphism in $\XXX_\WW$. Then we have conflations $M_1 \infl X \defl M_0$ and $M_0 \infl Y \defl M_{-1}$ in $\mod\CC$ such that $f$ is the composition $X \defl M_0 \infl Y$. 
Suppose that $n=1$. It follows that $\Ext^{>0}_{\mod\CC}(M_0,\WW) = \Ext^{>1}_{\mod\CC}(M_{-1},\WW)=0$ since $\id \WW \leq 1$. Thus $M_0$ is in $^\perp\WW=\XXX_\WW$. Since $^\perp\WW$ is resolving in $\mod\CC$, it follows that $M_1$ is also in $^\perp\WW$. Consequently $X\defl M_0$ is a deflation in $^\perp\WW$, which shows that $\XXX_\WW$ has $0$-kernels.

Next let us consider the case $n \geq 2$. Note that $\XXX_\WW$ is contravariantly finite in $\mod\CC$ by Corollary \ref{abcotilt}. This gives a right $\XXX_\WW$-approximation $X_1 \to M_1$. Since $\XXX_\WW$ contains all projective objects, this morphism is an epimorphism. Hence we obtain a conflation $M_2 \infl X_1 \defl M_1$ in $\mod\CC$. Repeat this construction until we get $M_{n-1} \infl X_{n-2} \defl M_{n-2}$. (If $n=2$, we interpret $X_0 = X$.) By the dimension shift argument, we have $\Ext^{>0}_{\mod\CC}(M_{n-1},\WW) = \cdots = \Ext^{>n}_{\mod\CC}(M_{-1},\WW)=0$, which implies $M_{n-1}\in\XXX_\WW$. Consider the complex 
\[
0 \to M_{n-1} \to X_{n-2} \to \cdots \to X_{1} \to X \to Y
\]
 in $\XXX_\WW$. Since the morphism $X_i \defl M_i$ in each conflation $M_{i+1} \infl X_i \defl M_i$  is a right $\XXX_\WW$-approximation for $i \geq 1$, it is easy to see that 
 \[
 0 \to \XXX_\WW(-,M_{n-1}) \to \cdots \to \XXX_\WW(-,X_1) \to \XXX_\WW(-,X) \to \XXX_\WW(-,Y)
 \] is exact. Hence $\XXX_\WW$ has $(n-1)$-kernels.
\end{proof}
Surprisingly, having $n$-kernels is not only necessary but also sufficient for an exact category to be equivalent to $\XXX_\WW$ for an $n$-cotilting subcategory $\WW$ of $\mod\CC$, as the following proposition shows. This was proved in \cite[Theorem 2.7]{kal} for the case of Frobenius categories.
\begin{proposition}\label{kercotilt}
Let $\EE$ be a skeletally small exact category with enough projectives $\PP$ and enough injectives $\II$. Consider the Morita embedding \emph{(\ref{moritaembedding})} $\P: \EE \to \mod\PP$. Suppose that there exists a subcategory $\MM$ of $\EE$ which contains $\PP$ and has $(n-1)$-kernels for $n \geq 0$. Then the following hold.
\begin{enumerate}
\item $\WW := \P(\II)$ is an $n$-cotilting subcategory of $\mod\PP$.
\item The equality $\add\P(\EE) = \XXX_\WW$ holds. Moreover $\P(\EE) = \XXX_\WW$ if and only if $\EE$ is idempotent complete.
\item $\PP$ has weak kernels.
\end{enumerate}
\end{proposition}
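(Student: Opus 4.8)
The plan is to reduce (1) and (2) to Proposition~\ref{xispe} by verifying its single hypothesis $\widehat{\P(\EE)}^n=\mod\PP$, and to establish (3) first, since once $\PP$ is known to have weak kernels Proposition~\ref{wk} tells us $\mod\PP$ is the category of finitely presented $\PP$-modules, so every object of $\mod\PP$ admits a presentation by representable functors. Two elementary facts will be used throughout: since $\PP\subseteq\MM$ is a full subcategory, $\MM(P,-)=\EE(P,-)$ for $P\in\PP$, so restricting $\MM$-modules along $\PP\hookrightarrow\MM$ is exact and carries $\MM(-,M)$ to $\P M$; and $\P$, being the restriction to $\PP$ of the Yoneda embedding $\EE\to\Mod\EE$, preserves monomorphisms (and, being exact, sends deflations to epimorphisms).

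For (3) I would take a morphism $f\colon P_1\to P_0$ in $\PP\subseteq\MM$ and produce a weak kernel of $f$ lying in $\PP$. If $n\geq 2$, the $(n-1)$-kernels of $\MM$ give a complex $0\to M_{n-1}\to\dots\to M_1\xrightarrow{f_1}P_1\xrightarrow{f}P_0$ in $\MM$ that becomes exact after applying $\MM(-,-)$; in particular $ff_1=0$ and $f_1$ is a weak kernel of $f$ ``in $\MM$''. If $n\leq 1$, then $\MM$ is an exact subcategory of $\EE$, and factoring $f$ as a deflation $P_1\twoheadrightarrow A$ followed by a monomorphism (an inflation when $n=0$) $A\to P_0$ in $\MM$, hence in $\EE$, produces via the conflation $K\rightarrowtail P_1\twoheadrightarrow A$ the honest kernel $K\rightarrowtail P_1$ of $f$ in $\EE$. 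In either case I choose a deflation $Q\twoheadrightarrow M_1$ (resp. $Q\twoheadrightarrow K$) with $Q\in\PP$, and then check, using projectivity of the test objects of $\PP$ to lift maps along this deflation, that the composite $Q\to P_1$ is a weak kernel of $f$ in $\PP$. This settles (3).

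For (1) and (2) it remains to prove $\widehat{\P(\EE)}^n=\mod\PP$. Given $X\in\mod\PP$, I pick a presentation $\P P_1\xrightarrow{\P f}\P P_0\to X\to 0$ with $f\colon P_1\to P_0$ in $\PP\subseteq\MM$. If $n=0$, factoring $f$ as a deflation followed by an inflation $A\rightarrowtail P_0\twoheadrightarrow C$ and applying the exact functor $\P$ (using that $\P$ turns the deflation part into an epimorphism) identifies $X=\coker(\P f)$ with $\P C\in\P(\EE)$, so $\widehat{\P(\EE)}^0=\P(\EE)=\mod\PP$. If $n=1$, the analogous factorization with $A\to P_0$ merely a monomorphism still gives a short exact sequence $0\to\P A\to\P P_0\to X\to 0$ with $\P A,\P P_0\in\P(\EE)$, so $X\in\widehat{\P(\EE)}^1$. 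If $n\geq 2$, restricting the $(n-1)$-kernel complex of $\MM$ at $f$ to $\PP$ and splicing in $X=\coker(\P f)$ yields an exact sequence $0\to\P M_{n-1}\to\dots\to\P P_1\to\P P_0\to X\to 0$ with every term except $X$ in $\P(\EE)$; since $\mod\PP$ is thick in $\Mod\PP$ by Proposition~\ref{mod}, it decomposes into conflations in $\mod\PP$, so $X\in\widehat{\P(\EE)}^n$. Thus $\widehat{\P(\EE)}^n=\mod\PP$ in all cases, and Proposition~\ref{xispe} then delivers (1) and (2).

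The only delicate point I anticipate is the bookkeeping across the three regimes $n\geq 2$, $n=1$, $n=0$: one must pin down exactly what ``$(n-1)$-kernels'' provides in each case and verify that the resulting factorizations and weak kernels survive application of $\P$. Once the two enabling observations above are in place, that $\P$ preserves monomorphisms and that $\mod\PP$ is thick in $\Mod\PP$, no genuinely new idea beyond Proposition~\ref{xispe} is required.
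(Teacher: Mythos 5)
Your proposal is correct and follows essentially the same route as the paper: part (3) is obtained by composing the (weak) kernel data supplied by $\MM$ with a deflation from a projective, and parts (1) and (2) are reduced to Proposition~\ref{xispe} by verifying $\widehat{\P(\EE)}^n=\mod\PP$ from an $(n-1)$-kernel of a projective presentation. The only difference is that the paper treats $n\geq 2$ explicitly and leaves $n=0,1$ to the reader, whereas you carry out those cases in detail.
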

\begin{proof}
Since $\MM$ has weak kernels and every object in $\MM$ has a deflation from some object in $\PP$, it is easy to see that $\PP$ has weak kernels. Thus (3) holds.

By Proposition \ref{xispe}, we only have to show $\widehat{\P(\EE)}^n = \mod\PP$ to prove (1) and (2). Let $X$ be any object in $\mod\PP$. Fix a morphism $P_1 \to P_0$ in $\PP$ such that $\P P_1 \to \P P_0 \to X \to 0$ is exact.

First suppose that $n \geq 2$. Then by taking the $(n-1)$-kernel of $P_1 \to P_0$ inside $\MM$, we get a complex 
\[
0 \to M_n \to \cdots \to M_2 \to P_1 \to P_0
\]
in $\MM$. Since $\MM$ contains $\PP$, it easily follows that 
 \[
 0\to \P M_n \to \cdots \to \P P_1 \to \P P_0 \to X \to 0
 \]
is a complex which decomposes into conflations in $\mod\PP$. This gives $\widehat{\P(\EE)}^n = \mod\PP$.

The case $n=0$ and $1$ are quite similar and left to the reader.
\end{proof}
This immediately gives the following characterization of cotilting subcategories of a module category $\mod\CC$ amongst Wakamatsu tilting subcategories.
\begin{corollary}\label{cotiltwakamatsu}
Let $\CC$ be a skeletally small additive category and $\WW$ a Wakamatsu tilting subcategory of $\mod\CC$. Then the following are equivalent for $n \geq 0$.
\begin{enumerate}
\item $\CC$ has weak kernels and $\WW$ is $n$-cotilting.
\item $\XXX_\WW$ has $(n-1)$-kernels.
\item $\XXX_\WW$ has a subcategory $\MM$ which contains all projective objects and has $(n-1)$-kernels.
\end{enumerate}
\end{corollary}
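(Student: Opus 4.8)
The plan is to run the cycle $(1)\Rightarrow(2)\Rightarrow(3)\Rightarrow(1)$. The implication $(1)\Rightarrow(2)$ is immediate from Proposition \ref{cotiltker}: if $\CC$ has weak kernels and $\WW$ is $n$-cotilting in $\mod\CC$, then $\XXX_\WW$ has $(n-1)$-kernels. The implication $(2)\Rightarrow(3)$ is trivial, taking $\MM=\XXX_\WW$ itself: it obviously contains all projective objects of $\XXX_\WW$ and has $(n-1)$-kernels by assumption.

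For $(3)\Rightarrow(1)$ the idea is to feed the situation into Proposition \ref{kercotilt} applied to $\EE=\XXX_\WW$. By Proposition \ref{xxww} the category $\XXX_\WW$ is exact with enough projectives and enough injectives, its projective objects are exactly those in $\proj\CC$, and its injective objects are exactly those in $\add\WW=\WW$. Thus $\PP(\XXX_\WW)=\proj\CC$, and the Morita embedding $\P:\XXX_\WW\to\mod\proj\CC$ may be identified, under the canonical equivalence $\mod\proj\CC\equi\mod\CC$, with the inclusion $\XXX_\WW\hookrightarrow\mod\CC$: for $X\in\XXX_\WW$ one has $\P X=\XXX_\WW(-,X)|_{\proj\CC}$, which by the Yoneda lemma corresponds to $X$ viewed again as a $\CC$-module. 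Under this identification $\WW=\P(\II(\XXX_\WW))$. Now the subcategory $\MM$ furnished by (3) contains $\PP(\XXX_\WW)=\proj\CC$ and has $(n-1)$-kernels, so Proposition \ref{kercotilt} applies and yields: $\WW=\P(\II(\XXX_\WW))$ is an $n$-cotilting subcategory of $\mod\proj\CC\equi\mod\CC$, and $\PP(\XXX_\WW)=\proj\CC$ has weak kernels. Since $\CC$ and $\proj\CC$ are Morita equivalent, $\mod\CC\equi\mod\proj\CC$ and finitely presented modules correspond to finitely presented modules, so $\CC$ has weak kernels by Proposition \ref{wk}. This gives (1), completing the cycle.

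The point to be careful about is the translation of the output of Proposition \ref{kercotilt}. That proposition is phrased for an abstract exact category $\EE$ and produces a cotilting subcategory $\P(\II)$ inside $\mod\PP(\EE)$; the real work here is verifying --- via Proposition \ref{xxww}(4),(5) --- that after specializing $\EE=\XXX_\WW$ and passing through the Morita equivalence $\mod\proj\CC\equi\mod\CC$, this $\P(\II)$ is literally $\WW$ sitting inside $\mod\CC$, so that ``$\P(\II)$ is $n$-cotilting'' becomes exactly ``$\WW$ is $n$-cotilting in $\mod\CC$''. Beyond that identification the argument is purely formal.
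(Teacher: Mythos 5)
Your proposal is correct and follows the paper's proof exactly: $(1)\Rightarrow(2)$ via Proposition \ref{cotiltker}, $(2)\Rightarrow(3)$ trivially, and $(3)\Rightarrow(1)$ by applying Proposition \ref{kercotilt} to $\EE=\XXX_\WW$ after identifying the Morita embedding with the inclusion $\XXX_\WW\hookrightarrow\mod\CC$. The only difference is that you spell out this identification (via Proposition \ref{xxww}(4),(5)) in more detail than the paper, which simply calls it obvious.
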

\begin{proof}
(1) $\Rightarrow$ (2): This is Proposition \ref{cotiltker}.

(2) $\Rightarrow$ (3): This is trivial.

(3) $\Rightarrow$ (1): This is obvious from Proposition \ref{kercotilt}, since the Morita embedding $\P$ can be identified with the natural inclusion $\XXX_\WW \to \mod\CC$ in this case.
\end{proof}

As an application, we immediately get the following information about global dimensions.
\begin{corollary}
Let $\Lambda$ be an artin $R$-algebra and $W$ a Wakamatsu tilting $\Lambda$-module which is \emph{not} a cotilting module. Then for any $M \in \XXX_\WW$ such that $\Lambda \in \add M$, the global dimension of $\End_\Lambda(M)$ is infinite.
\end{corollary}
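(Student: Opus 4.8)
The plan is to prove the contrapositive by contradiction, combining the characterization of cotilting subcategories via higher kernels (Corollary~\ref{cotiltwakamatsu}) with the homological dictionary between $n$-kernels and global dimension (Proposition~\ref{gldim}). So suppose there were some $M \in \XXX_\WW$ with $\Lambda \in \add M$ such that $\End_\Lambda(M)$ has finite global dimension. Since $\End_\Lambda(M)$ is an artin algebra, its right global dimension is then also finite; call it $d$, and put $m := \max(d,2)$, so that $m \geq 2$.

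First I would set up the ambient exact category. Write $\WW = \add W$, a Wakamatsu tilting subcategory of $\mod\Lambda$, which we view as $\mod\CC$ for $\CC = \proj\Lambda$. By Proposition~\ref{xxww} the category $\XXX_\WW$ is resolving in $\mod\Lambda$, hence closed under finite direct sums and summands, and its subcategory of projective objects is precisely $\proj\Lambda$. Since $\Lambda \in \add M$ we have $\proj\Lambda \subseteq \add M$, and $\add M \subseteq \XXX_\WW$; thus $\MM := \add M$ is a subcategory of $\XXX_\WW$ containing all projective objects of $\XXX_\WW$. Next I would record that $\MM$ has higher kernels: as $\End_\Lambda(M)$ is right artinian it is right coherent, and its right global dimension is $d \leq m = (m-1)+1$, so applying Proposition~\ref{gldim} to the skeletally small idempotent complete category $\mod\Lambda$ (which has weak kernels) and to the object $M$ shows that $\MM = \add M$ has $(m-1)$-kernels, where $m-1 \geq 1$ so that this is exactly the exact-structure-free notion of Definition~\ref{nkernel}(1).

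Now for the conclusion: by Corollary~\ref{cotiltwakamatsu} in the form $(3)\Rightarrow(1)$, applied with the integer $m$, the existence of such a subcategory $\MM$ of $\XXX_\WW$ forces $\WW$ to be an $m$-cotilting subcategory of $\mod\Lambda$. By Proposition~\ref{wakamatsucotilting} there is then a cotilting $\Lambda$-module $U$ with $\add U = \WW = \add W$. Since the three defining properties of a cotilting module---finiteness of $\id$, self-orthogonality, and $D\Lambda \in \widehat{\add(-)}$---depend only on the additive closure of the module, $\add W = \add U$ makes $W$ itself a cotilting module, contradicting the hypothesis. Hence no such $M$ exists, which gives the claim.

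The only point that requires a little attention, rather than being a genuine obstacle, is the choice $m = \max(d,2)$: it is made precisely so that we stay within the range $n \geq 1$, where ``$n$-kernels'' has the unambiguous additive meaning used in Corollary~\ref{cotiltwakamatsu} and in Proposition~\ref{gldim}; the rest is a routine chain of the cited results.
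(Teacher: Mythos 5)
Your proposal is correct and follows essentially the same route as the paper's proof: assume for contradiction that $\End_\Lambda(M)$ has finite global dimension, invoke Proposition \ref{gldim} to get higher kernels for $\add M$, and then apply Corollary \ref{cotiltwakamatsu} with $\MM=\add M$ to force $\WW$ to be cotilting. Your extra care with the range $n\geq 1$ in Proposition \ref{gldim} (via $m=\max(d,2)$) and with passing from ``$\add W$ is a cotilting subcategory'' to ``$W$ is a cotilting module'' only makes explicit what the paper leaves implicit.
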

\begin{proof}
Let $W$ be a Wakamatsu tilting $\Lambda$-module and suppose that there exists a $\Lambda$-module $M \in \XXX_\WW$ such that $\Lambda \in \add M$ and the global dimension of $\End_\Lambda(M)$ is finite. Then by Proposition \ref{gldim}, $\add M$ has $n$-kernels for some $n \geq 0$. Applying Corollary \ref{cotiltwakamatsu} to $\MM = \add M$, we have that $M$ must be a cotilting module.
\end{proof}

Summarizing these results, we obtain the internal characterizations of exact categories associated with cotilting subcategories.
\begin{theorem}\label{cor1}
Let $\EE$ be a skeletally small exact category. For an integer $n \geq 0$, the following are equivalent.
\begin{enumerate}
\item There exist a skeletally small additive category $\CC$ with weak kernels and an $n$-cotilting subcategory $\WW$ of $\mod\CC$ such that $\EE$ is exact equivalent to $\XXX_\WW$.
\item $\EE$ is idempotent complete, has enough projectives and injectives and has $(n-1)$-kernels.
\item $\EE$ is idempotent complete, has enough projectives and injectives and $\EE$ has a subcategory $\MM$ such that $\MM$ contains $\PP(\EE)$ and $\MM$ has $(n-1)$-kernels.
\end{enumerate}
\end{theorem}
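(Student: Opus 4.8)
The plan is to prove the cycle $(1) \Rightarrow (2) \Rightarrow (3) \Rightarrow (1)$, using the three propositions of this section as black boxes. All three conditions are visibly invariant under exact equivalence: idempotent completeness is invariant under equivalence of categories; ``enough projectives and injectives'' is preserved by any exact equivalence, since such a functor identifies conflations and hence projective and injective objects; and ``having $(n-1)$-kernels'' is preserved because for $n\geq 2$ it is phrased purely via representable functors, while for $n\in\{0,1\}$ it refers only to deflations, inflations and monomorphisms, all of which an exact equivalence preserves and reflects. Thus for each implication it suffices to verify the relevant condition for one convenient model.

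For $(1)\Rightarrow(2)$ I would take $\EE=\XXX_\WW$ with $\WW$ an $n$-cotilting subcategory of $\mod\CC$ and $\CC$ having weak kernels. By Proposition \ref{mod}, $\mod\CC$ is idempotent complete, and by Proposition \ref{xxww} the category $\XXX_\WW$ is resolving in $\mod\CC$ — in particular closed under summands, hence idempotent complete — and has enough projectives and injectives. Since an $n$-cotilting subcategory is Wakamatsu tilting, Proposition \ref{cotiltker} applies and shows that $\XXX_\WW$ has $(n-1)$-kernels. The implication $(2)\Rightarrow(3)$ is immediate upon taking $\MM=\EE$, which contains $\PP(\EE)$ and has $(n-1)$-kernels by hypothesis.

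For $(3)\Rightarrow(1)$ I would feed the given subcategory $\MM$ into Proposition \ref{kercotilt} via the Morita embedding $\P\colon\EE\to\mod\PP$. That proposition yields that $\PP$ has weak kernels, that $\WW:=\P(\II)$ is an $n$-cotilting subcategory of $\mod\PP$, and — using that $\EE$ is idempotent complete — that $\P(\EE)=\XXX_\WW$, not merely $\add\P(\EE)=\XXX_\WW$. Combining this with Proposition \ref{preresol} (or Theorem \ref{pe}), which gives that $\EE$ is exact equivalent to $\P(\EE)$, and taking $\CC:=\PP$ completes the cycle. Since every substantive step is already isolated in Propositions \ref{cotiltker} and \ref{kercotilt}, I do not anticipate a genuine obstacle; the only points demanding attention are the bookkeeping around exact-equivalence invariance of the three hypotheses, and the fact that Proposition \ref{kercotilt} requires only a subcategory $\MM$ with $(n-1)$-kernels rather than $\EE$ itself having them — which is precisely the strengthening recorded in condition (3) and lets the two implications into and out of condition (1) match up.
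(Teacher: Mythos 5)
Your proposal is correct and follows exactly the route the paper intends: the theorem is stated as a summary of Propositions \ref{cotiltker} and \ref{kercotilt} together with the Morita embedding results (Proposition \ref{preresol}, Theorem \ref{pe}), and your cycle $(1)\Rightarrow(2)\Rightarrow(3)\Rightarrow(1)$ assembles precisely those ingredients, including the correct use of idempotent completeness to upgrade $\add\P(\EE)=\XXX_\WW$ to $\P(\EE)=\XXX_\WW$. The remark on invariance of the three conditions under exact equivalence is the right bookkeeping point and is handled adequately.
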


By restricting our attention to artin $R$-algebras, we have the following criterion.
\begin{corollary}\label{artincase}
Let $\EE$ be a skeletally small Hom-finite exact $R$-category. For an integer $n \geq 0$, the following are equivalent.
\begin{enumerate}
\item There exist an artin $R$-algebra $\Lambda$ and a cotilting $\Lambda$-module $U$ with $\id U \leq n$ such that $\EE$ is exact equivalent to $^\perp U$.
\item $\EE$ is idempotent complete, has a projective generator $P$ and has enough injectives, and has $(n-1)$-kernels.
\item $\EE$ is idempotent complete, has a projective generator $P$ and has enough injectives, and $\EE$ has a subcategory $\MM$ such that $\MM$ contains $P$ and $\MM$ has $(n-1)$-kernels.
\item $\EE^{\op}$ satisfies one of the conditions {\upshape (1)-(3)}.
\end{enumerate}
\end{corollary}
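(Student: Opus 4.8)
The plan is to deduce the corollary from Theorem \ref{cor1} by rephrasing its abstract data in terms of genuine artin $R$-algebras and cotilting modules, and to obtain the clause (4) about $\EE^{\op}$ from the classical duality theory of cotilting modules. First I would record the relevant dictionary. If $\EE$ is a Hom-finite exact $R$-category with a projective generator $P$, then $\Lambda:=\End_\EE(P)$ is an artin $R$-algebra, $\PP(\EE)=\add P$ is equivalent to $\proj\Lambda$, so the Morita embedding reads $\P\colon\EE\to\mod\PP(\EE)\equi\mod\Lambda$, and $\proj\Lambda$ has weak kernels since $\mod\Lambda$ is abelian (Proposition \ref{wk}); conversely, for any artin $R$-algebra $\Lambda$ the category $\proj\Lambda$ is idempotent complete with projective generator $\Lambda$. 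Moreover, by Proposition \ref{wakamatsucotilting} an additive $n$-cotilting subcategory $\WW$ of $\mod\Lambda$ is precisely $\add U$ for a cotilting $\Lambda$-module $U$ with $\id U\le n$, and then $\XXX_\WW = {}^\perp\WW = {}^\perp U$.

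Granting this, I would prove (1) $\Leftrightarrow$ (2) $\Leftrightarrow$ (3) via Theorem \ref{cor1}. For (1) $\Rightarrow$ (2): realize $\EE$ as $^\perp U = \XXX_{\add U}$ with $\add U$ an $n$-cotilting subcategory of $\mod\Lambda$ and $\proj\Lambda$ having weak kernels; Theorem \ref{cor1}, direction (1) $\Rightarrow$ (2), then yields idempotent completeness, enough projectives and injectives, and $(n-1)$-kernels, while Proposition \ref{xxww}(4) exhibits $\Lambda\in\proj\Lambda$ as a projective generator of $^\perp U$, which transports along the exact equivalence to a projective generator of $\EE$. The implication (2) $\Rightarrow$ (3) is trivial (take $\MM=\EE$). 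For (3) $\Rightarrow$ (1): after replacing $\MM$ by $\add\MM$---which still has $(n-1)$-kernels and now contains $\PP(\EE)=\add P$---I apply Proposition \ref{kercotilt} to the Morita embedding $\P\colon\EE\to\mod\PP(\EE)\equi\mod\Lambda$ with $\Lambda:=\End_\EE(P)$; this gives that $\WW:=\P(\II(\EE))$ is an additive $n$-cotilting subcategory of $\mod\Lambda$ and, using that $\EE$ is idempotent complete, that $\P(\EE)=\XXX_\WW$, so $\EE$ is exact equivalent to $\XXX_\WW = {}^\perp U$ for a cotilting $\Lambda$-module $U$ with $\id U\le n$ by the dictionary. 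This is (1).

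For (4), observe that $\EE^{\op}$ is again a Hom-finite exact $R$-category, so (1)--(3) are equivalent for it; it therefore suffices to show that $\EE$ satisfies (1) if and only if $\EE^{\op}$ does. Suppose $\EE$ is exact equivalent to $^\perp U$ for a cotilting $\Lambda$-module $U$ with $\id U\le n$, and set $\Gamma:=\End_\Lambda(U)$, which is again an artin $R$-algebra. By the classical duality theory of cotilting modules (the cotilting analogue of Miyashita's theorem on tilting modules of finite projective dimension), the functor $\Hom_\Lambda(-,U)$ induces an exact duality between $^\perp U\subseteq\mod\Lambda$ and $^\perp U'\subseteq\mod\Gamma^{\op}$, where $U'$ is $U$ regarded as a cotilting $\Gamma^{\op}$-module via the canonical left $\Gamma$-action and satisfies $\id U' = \id U\le n$. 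Hence $\EE^{\op}$ is exact equivalent to $^\perp U'$, so it satisfies (1); the reverse implication is symmetric.

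I expect the one genuine obstacle to be this last ingredient: the self-duality of $^\perp U$ under $\Hom_\Lambda(-,U)$ and its compatibility with injective dimension must be quoted with care, as it is the only input not provided by the earlier sections (it follows, for instance, from Matlis duality $D$ combined with Miyashita's tilting theorem, or may be taken from the standard literature on cotilting modules). The remaining points---that the category $\CC$ in Theorem \ref{cor1} may be taken to be $\proj\Lambda$ for a genuine artin $R$-algebra, and that $\add\MM$ inherits $(n-1)$-kernels from $\MM$---are routine and I would dispatch them quickly.
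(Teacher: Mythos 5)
Your proposal is correct and follows essentially the same route as the paper: the equivalence of (1)--(3) is obtained from Theorem \ref{cor1} together with Proposition \ref{wakamatsucotilting} (translating $n$-cotilting subcategories of $\mod\Lambda$ into cotilting modules of injective dimension at most $n$), and (4) is reduced to the self-duality of condition (1) via the exact duality $\Hom_\Lambda(-,U)\colon {}^\perp(U_\Lambda)\to{}^\perp({}_\Gamma U)$ for $\Gamma=\End_\Lambda(U)$, which the paper likewise quotes from the classical (Brenner--Butler/Miyashita-type) cotilting theory. Your extra care about the dictionary between projective generators and $\proj\Lambda$, and about replacing $\MM$ by $\add\MM$, only makes explicit what the paper leaves implicit.
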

\begin{proof}
The conditions (1)-(3) are equivalent by Theorem \ref{cor1} since the notion of $n$-cotilting subcategories in $\mod\Lambda$ coincides with usual cotilting modules by Proposition \ref{wakamatsucotilting} for an artin $R$-algebra $\Lambda$. It remains to show (4). It suffices to check that (1) is self-dual. Suppose that $\EE={}^\perp U$ for a cotilting $\Lambda$-module with $\id U \leq n$. The Brenner-Butler theorem implies that $_\Gamma U$ is a cotilting $\Gamma$-module for $\Gamma := \End_\Lambda(U)$ and $\Hom_\Lambda(-,U)$ gives an exact duality $\EE = {}^\perp (U_\Lambda) \to {}^\perp( {}_\Gamma U) $. Thus $\EE^{\op}$ is exact equivalent to $^\perp( {}_\Gamma U)$, so $\EE^{\op}$ satisfies (1).
\end{proof}

Next we specialize these results to Frobenius categories. Recall that a two-sided noetherian ring $\Lambda$ is called \emph{Iwanaga-Gorenstein} if the right and left injective dimensions of $\Lambda$ are finite. It was shown in \cite[Lemma A]{za} that $\id {}_\Lambda \Lambda = \id \Lambda_\Lambda$ holds for an Iwanaga-Gorenstein ring $\Lambda$. We call $\Lambda$ an \emph{$n$-Iwanaga-Gorenstein} if $\id \Lambda_\Lambda \leq n$. If $\Lambda$ is an artin $R$-algebra, then $\Lambda$ is Iwanaga-Gorenstein if and only if $\Lambda_\Lambda$ is a cotilting module, or $\widehat{\GP\Lambda}=\mod\Lambda$ (see \cite[Proposition 6.1]{applications} or \cite[Proposition 4.2]{cm}).

We have the following characterization of $\GP \Lambda$ for an Iwanaga-Gorenstein ring $\Lambda$. Note that this result was mentioned in \cite[Proposition 4]{kal2}.
\begin{corollary}
Let $\EE$ be a skeletally small exact category. For an integer $n \geq 0$, the following are equivalent.
\begin{enumerate}
\item There exists an $n$-Iwanaga-Gorenstein ring $\Lambda$ such that $\EE$ is exact equivalent to $\GP \Lambda$.
\item $\EE$ is idempotent complete and Frobenius, has a projective generator $P$ such that $\End_\EE(M)$ is noetherian, and has both $(n-1)$-kernels and $(n-1)$-cokernels.
\item $\EE$ is idempotent complete and Frobenius, has a projective generator $P$ such that $\End_\EE(P)$ is noetherian, and has a subcategory $\MM$ such that $\MM$ contains $P$ and $\MM$ has both $(n-1)$-kernels and $(n-1)$-cokernels. 
\end{enumerate}
\end{corollary}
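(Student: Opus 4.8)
The plan is to deduce this corollary from Theorem~\ref{cor1} specialised to Frobenius categories, together with the standard characterisation of $n$-Iwanaga-Gorenstein rings. The key observation is that for a Frobenius exact category $\EE$ with projective generator $P$ we have $\PP:=\add P=\PP(\EE)=\II(\EE)$, so the Morita embedding $\P\colon\EE\to\mod\PP$ has $\WW:=\P(\II(\EE))=\P(\PP)=\proj\PP$; hence the Wakamatsu tilting subcategory attached to $\EE$ is forced to be $\proj\PP$, and $\XXX_\WW=\GP\PP$ by Definition~\ref{gpc}. Moreover $\mod\PP\equi\mod\Lambda$ for $\Lambda:=\End_\EE(P)$ by Definition-Proposition~\ref{morita}, and this equivalence carries $\GP\PP$ to $\GP\Lambda$ and $\proj\PP$ to $\proj\Lambda$. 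Throughout I read ``noetherian'' as ``two-sided noetherian'', matching the hypothesis built into the notion of Iwanaga-Gorenstein ring. The implication $(2)\Rightarrow(3)$ is immediate on taking $\MM=\EE$.

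For $(3)\Rightarrow(1)$, I would first replace $\MM$ by $\add\MM$, which still has $(n-1)$-kernels, so that $\MM\supseteq\PP$. Since $\MM$ has $(n-1)$-kernels, Proposition~\ref{kercotilt} applies to $\P\colon\EE\to\mod\PP$ and yields that $\WW=\proj\PP$ is an $n$-cotilting subcategory of $\mod\PP$, that $\PP$ has weak kernels, and---because $\EE$ is idempotent complete---that $\P(\EE)=\XXX_\WW=\GP\PP$. Transporting along $\mod\PP\equi\mod\Lambda$ then gives an exact equivalence $\EE\equi\GP\Lambda$ and shows $\proj\Lambda$ is $n$-cotilting in $\mod\Lambda$. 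As $\PP$ has weak kernels, $\Lambda$ is right coherent, so $\mod\Lambda$ is abelian by Proposition~\ref{wk}, and since $\Lambda$ is noetherian it coincides with the category of finitely generated $\Lambda$-modules, in which $\Ext^{>n}_{\mod\Lambda}(-,\Lambda)=0$ is equivalent to $\id\Lambda_\Lambda\le n$. By Proposition~\ref{cotiltingcond} the $n$-cotilting property of $\proj\Lambda$ is equivalent to $\widehat{\GP\Lambda}^n=\mod\Lambda$, which for a two-sided noetherian ring is precisely $\id\Lambda_\Lambda\le n$: one direction is a dimension shift using $\Ext^\ast_\Lambda(-,\Lambda)$ together with $\Ext^{>0}_\Lambda(\GP\Lambda,\Lambda)=0$, and the other is Auslander--Buchweitz approximation of finitely generated modules by Gorenstein projectives (the exact-category version being the content of Appendix~A). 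Hence $\Lambda$ is $n$-Iwanaga-Gorenstein, establishing $(1)$.

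For $(1)\Rightarrow(2)$, assume $\EE\equi\GP\Lambda$ with $\Lambda$ an $n$-Iwanaga-Gorenstein ring. By Proposition~\ref{xxww}, $\GP\Lambda=\XXX_{\proj\Lambda}$ is a Frobenius exact subcategory of $\mod\Lambda$, closed under summands (hence idempotent complete), with projective generator $\Lambda$, and $\End_{\GP\Lambda}(\Lambda)\cong\Lambda$ is noetherian. Since $\Lambda$ is two-sided noetherian $\proj\Lambda$ has weak kernels, and since $\id\Lambda_\Lambda\le n$ the subcategory $\proj\Lambda$ is $n$-cotilting in $\mod\Lambda$ (via Proposition~\ref{cotiltingcond} and $\widehat{\GP\Lambda}^n=\mod\Lambda$); then Proposition~\ref{cotiltker} (applied with the base category $\proj\Lambda$, which has weak kernels) shows that $\GP\Lambda=\XXX_{\proj\Lambda}$ has $(n-1)$-kernels. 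For the $(n-1)$-cokernels I would pass to $\Lambda^{\op}$, which is again $n$-Iwanaga-Gorenstein because $\id{}_\Lambda\Lambda=\id\Lambda_\Lambda\le n$; the functor $\Hom_\Lambda(-,\Lambda)$ restricts to an exact duality $\GP\Lambda\equi\GP\Lambda^{\op}$---it sends a short exact sequence of Gorenstein projectives to one, since $\Ext^{>0}_\Lambda(\GP\Lambda,\Lambda)=0$, and its square is naturally isomorphic to the identity. Hence $(\GP\Lambda)^{\op}\equi\GP\Lambda^{\op}$ has $(n-1)$-kernels by the previous step, i.e.\ $\GP\Lambda$ has $(n-1)$-cokernels. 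This establishes all clauses of $(2)$.

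The main obstacle is the dictionary between ``$\proj\Lambda$ is an $n$-cotilting subcategory of $\mod\Lambda$'' and ``$\Lambda$ is $n$-Iwanaga-Gorenstein''. Via Proposition~\ref{cotiltingcond} this reduces, for two-sided noetherian $\Lambda$, to the equivalence between $\widehat{\GP\Lambda}^n=\mod\Lambda$ and $\id\Lambda_\Lambda\le n$; the non-formal direction---that $\id\Lambda_\Lambda\le n$ forces every finitely generated module to admit a resolution of length $\le n$ by Gorenstein projectives---is the substantial classical input from Gorenstein homological algebra. Two further points genuinely use the two-sided noetherian hypothesis: that the strong version of $\mod\Lambda$ in Definition~\ref{modc} coincides with the finitely generated modules (and that $\Ext^{>n}_{\mod\Lambda}(-,\Lambda)=0$ detects $\id\Lambda_\Lambda\le n$), and that $\Hom_\Lambda(-,\Lambda)$ is an exact anti-equivalence $\GP\Lambda\equi\GP\Lambda^{\op}$. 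A minor bookkeeping point is that replacing $\MM$ by $\add\MM$ in $(3)\Rightarrow(1)$ preserves having $(n-1)$-kernels---for $n\ge1$ because $\mod\MM$ and $\mod(\add\MM)$ are Morita equivalent, and for $n\in\{0,-1\}$ by a direct check on factorisations.
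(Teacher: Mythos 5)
Your treatment of $(2)\Rightarrow(3)$ and of $(1)\Rightarrow(2)$ follows the paper's route (including the use of the duality $\Hom_\Lambda(-,\Lambda)\colon\GP\Lambda\to\GP\Lambda^{\op}$ for the cokernels, and the classical fact $\widehat{\GP\Lambda}^{\,n}=\mod\Lambda$ for an $n$-Iwanaga-Gorenstein ring), and your bookkeeping about replacing $\MM$ by $\add\MM$ is a reasonable refinement. However, your $(3)\Rightarrow(1)$ has a genuine gap: the entire argument uses only the $(n-1)$-kernels of $\MM$ and produces only the bound $\id\Lambda_\Lambda\le n$; the $(n-1)$-cokernel hypothesis is never touched. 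From $\id\Lambda_\Lambda\le n$ alone you cannot conclude that $\Lambda$ is $n$-Iwanaga-Gorenstein, because the definition also requires $\id{}_\Lambda\Lambda<\infty$, and finiteness of one injective dimension of a two-sided noetherian ring is not known to force finiteness of the other (for artin algebras this is precisely the open Gorenstein symmetry conjecture). Zaks's theorem only says the two dimensions agree \emph{when both are finite}, so the left-hand bound must be established separately.

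The missing step is the paper's observation that condition $(3)$ is self-dual. Since $\EE$ is Frobenius, $\EE^{\op}$ is again an idempotent complete Frobenius category with projective generator $P$, one has $\End_{\EE^{\op}}(P)\cong\Lambda^{\op}$ (noetherian), and $\MM^{\op}$ contains $P$ and has $(n-1)$-kernels in $\EE^{\op}$ exactly because $\MM$ has $(n-1)$-cokernels in $\EE$. Running your argument on $\EE^{\op}$ then yields $\id(\Lambda^{\op})_{\Lambda^{\op}}\le n$, i.e.\ $\id{}_\Lambda\Lambda\le n$, completing the Iwanaga-Gorenstein claim. (A minor further remark: your detour through Proposition \ref{cotiltingcond} and $\widehat{\GP\Lambda}^{\,n}=\mod\Lambda$ to extract $\id\Lambda_\Lambda\le n$ is unnecessary, since condition (1) in the definition of an $n$-cotilting subcategory already gives $\Ext^{>n}_{\mod\Lambda}(-,\Lambda)=0$ directly; but this does not affect correctness.)
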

\begin{proof}
(1) $\Rightarrow$ (2):
We may assume that $\EE=\GP \Lambda$ for an Iwanaga-Gorenstein ring $\Lambda$. By Theorem \ref{cor1}, $\EE$ has $(n-1)$-kernels. To show that $\EE$ has $(n-1)$-cokernels, we only have to note that $\Hom_\Lambda(-,\Lambda):\GP \Lambda \to \GP \Lambda^{\op}$ gives a duality.

(2) $\Rightarrow$ (3): This is trivial.

(3) $\Rightarrow$ (1): 
Theorem \ref{cor1} gives $\id \Lambda_\Lambda \leq n$ and $\EE \equi \GP\Lambda$. Since (3) is self-dual, we have $\id {}_\Lambda\Lambda \leq n$, which implies $\Lambda$ is $n$-Iwanaga-Gorenstein.
\end{proof}

\section{Applications to artin $R$-algebras}
In the previous section, we gave a necessary and sufficient condition for an exact category $\EE$ to be exact equivalent to $^\perp U$ for some cotilting module $U$. As we have seen in Theorem \ref{cor1}, it suffices to check that $\EE$ has enough projectives and injectives, and has higher kernels. In this section, we apply this criterion to the representation theory of artin algebras. \emph{We always denote by $\Lambda$ an artin $R$-algebra in this section.}

\subsection{The category $(\mod\Lambda) / [\Sub M]$ as a torsionfree class}
Recall that a subcategory $\FF$ of $\mod\Lambda$ is said to be a \emph{torsionfree class} if $\FF$ is closed under extensions and submodules.
It is classical that for a cotilting module $U \in \mod\Lambda$ with $\id U \leq 1$, the subcategory $^\perp U$ of $\mod\Lambda$ is a torsionfree class, see \cite{ASS}.

To state our main theorem in this subsection, let us recall the definition of ideal quotients of additive categories. Let $\CC$ be an additive category and $\DD$ an additive subcategory of $\CC$. Denote by $[\DD]$ the ideal of $\CC$ consisting of all morphisms in $\CC$ which factor through some objects in $\DD$. Then the ideal quotient $\CC/[\DD]$ is an additive category whose objects are those in $\CC$ and whose morphisms are given by 
\[
(\CC/[\DD])(M,N) := \CC(M,N) /[\DD](M,N)
\]
 for $M$ and $N$ in $\CC$. Also see Definition \ref{functfin} for the notion of functorially finiteness.
\begin{theorem}\label{main1}
Let $\Lambda$ be an artin $R$-algebra and $\CC$ a functorially finite subcategory of $\mod\Lambda$ which is closed under submodules. Put $\EE := \mod\Lambda$ and denote by $\pi:\EE \defl \EE/[\CC]$ the natural quotient functor. Then the following hold.
\begin{enumerate}
\item There exists an exact structure on $\EE/[\CC]$ induced by $\CC$-conflations (see Appendix B for details) such that $\EE/[\CC]$ has $0$-kernels.
\item The exact category $\EE/[\CC]$ has enough projectives $\PP$ and injectives $\II$.
\item $\PP$ (resp. $\II$) is precisely the essential image of $\add ( \Lambda, \tau^- \CC )$ (resp. $\add ( D\Lambda, \tau \CC )$) under $\pi$.
\item $\PP$ is  of finite type if and only if $\II$ is of finite type. 
\item Suppose that {\upshape (4)} holds. Let $M$ (resp. $N$) be an object satisfying $\PP = \add M$ (resp. $\II = \add N$) and put $\Gamma := \End_{\EE/[\CC]}(M)$. Then $(\EE/[\CC])(M,N)$ is a $1$-cotilting $\Gamma$-module and $\EE/[\CC]$ is exact equivalent to a torsionfree class $^\perp U$. 
\end{enumerate}
\end{theorem}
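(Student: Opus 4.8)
The plan is to establish (1)--(3) directly via the relative homological algebra of Auslander--Solberg \cite{as1,as2,as3} (and its dictionary with exact structures, \cite{drss} and Appendix B), to read off (4) from (3), and then to obtain (5) by feeding $\EE/[\CC]$ into Theorem \ref{cor1}, in the sharpened form of Propositions \ref{kercotilt} and \ref{wakamatsucotilting}.

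For (1) and (2): since $\CC$ is functorially finite, I would equip $\mod\Lambda$ with the relative exact structure whose conflations are the $\CC$-conflations of Appendix B; functorial finiteness yields enough relative projectives and enough relative injectives, and every object of $\CC$ is simultaneously relatively projective and relatively injective, so by the descent of exact structures along quotients by projective--injective subcategories (Appendix B) the quotient $\EE/[\CC]$ inherits an exact structure in which the projectives and injectives are the $\pi$-images of the relative projectives and injectives of $\mod\Lambda$. For (3) I would identify those via relative almost split sequences: the relative projectives of $\mod\Lambda$ form $\add\{\Lambda,\tau^-\CC\}$ and the relative injectives $\add\{D\Lambda,\tau\CC\}$, whence the descriptions of $\PP$ and $\II$. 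The remaining part of (1), that $\EE/[\CC]$ has $0$-kernels, is where closure of $\CC$ under submodules is crucial: every map of $\mod\Lambda$ factors as an epimorphism followed by a monomorphism, and closure under submodules lets one take the epimorphism to be a deflation in the relative structure, so the factorization passes to $\EE/[\CC]$. Then (4) follows from (3): $\tau$ and $\tau^-$ restrict to mutually inverse bijections between indecomposable non-projective and indecomposable non-injective $\Lambda$-modules, so $\add\{\Lambda,\tau^-\CC\}$ and $\add\{D\Lambda,\tau\CC\}$ are of finite type together, a property preserved by $\pi$ (which collapses exactly the objects of $\CC$).

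To prove (5), I assume (4), so $\PP=\add M$ and $\II=\add N$. First I would observe that $\EE/[\CC]$ is idempotent complete, being the quotient of the Krull--Schmidt category $\mod\Lambda$ by the ideal $[\CC]$ of a summand-closed subcategory. Next I apply Proposition \ref{kercotilt} with $n=1$ and $\MM=\EE/[\CC]$: this $\MM$ contains $\PP$ and has $0$-kernels by (1), and $\EE/[\CC]$ has enough projectives and injectives by (2); the proposition yields that $\WW:=\P(\II)$ is a $1$-cotilting subcategory of $\mod\PP$, that $\PP$ has weak kernels, and --- using idempotent completeness --- that the Morita embedding is an exact equivalence $\EE/[\CC]\equi\XXX_\WW$. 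Since $\PP=\add M$ and $\EE/[\CC]$ is Hom-finite over $R$, the ring $\Gamma=\End_{\EE/[\CC]}(M)$ is an artin $R$-algebra, $\PP$ is Morita equivalent to $\Gamma$, and Definition-Proposition \ref{morita} identifies $\mod\PP\equi\mod\Gamma$ so that $\P$ becomes $X\mapsto\EE/[\CC](M,X)$; hence $\WW=\add U$ for $U:=\EE/[\CC](M,N)\in\mod\Gamma$, and $\add U$ is a $1$-cotilting subcategory of $\mod\Gamma$. By Proposition \ref{wakamatsucotilting} this is precisely the statement that $U$ is a cotilting $\Gamma$-module with $\id U\leq1$. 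Finally, $\WW$ being cotilting gives $\XXX_\WW={}^\perp\WW={}^\perp U$, so $\EE/[\CC]\equi{}^\perp U$ as exact categories; and $\id U\leq1$ forces $\Ext^2_\Gamma(-,U)=0$, so ${}^\perp U=\{X\in\mod\Gamma:\Ext^1_\Gamma(X,U)=0\}$ is closed under extensions and, by the long exact sequence of $\Ext_\Gamma(-,U)$, under submodules --- i.e.\ a torsionfree class of $\mod\Gamma$. This completes (5).

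The hard part is (1)--(3): building the $\CC$-relative exact structure, checking that it descends to $\EE/[\CC]$ with exactly the projective and injective subcategories claimed, and that $\EE/[\CC]$ acquires $0$-kernels. This rests squarely on the Auslander--Solberg theory of relative (almost split) sequences together with the hypothesis that $\CC$ is closed under submodules; once this infrastructure is available, (4) is a bookkeeping matter and (5) is a direct application of Sections 3--4 plus the classical fact that $^\perp U$ is a torsionfree class whenever $\id U\leq1$.
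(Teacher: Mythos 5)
Your overall architecture matches the paper's: (1)--(3) via the Auslander--Solberg relative structure $\EE_\CC$ and its descent to the quotient (Appendix B), (4) read off from (3), and (5) by feeding $\EE/[\CC]$ into the cotilting machinery of Section 4 (your route through Propositions \ref{kercotilt} and \ref{wakamatsucotilting} is exactly what Corollary \ref{artincase} packages). Parts (2)--(5) are essentially the paper's argument. However, there is a genuine gap in your proof of the $0$-kernels claim in (1), which is the technical heart of the theorem.

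You assert that for $f\colon X\to Y$ the epi--mono factorization $X\defl\im f\infl Y$ in $\mod\Lambda$ suffices because ``closure under submodules lets one take the epimorphism to be a deflation in the relative structure.'' This is false: for $X\to\im f$ to be a $\CC$-deflation, the kernel sequence $0\to K\to X\to\im f\to 0$ would have to be both $\EE(\CC,-)$-exact and $\EE(-,\CC)$-exact, and neither holds in general; closure of $\CC$ under submodules does not repair this. What that hypothesis gives (the dual of Proposition \ref{mono}) is only that monomorphisms of $\mod\Lambda$ stay monic in $\EE/[\CC]$, i.e.\ it handles the second factor $\im f\infl Y$, not the first. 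In the paper's Proposition \ref{proof2} the middle object of the factorization is \emph{not} $\u{\im f}$: after reducing to $f$ surjective one forms the pullback $E$ of $X\to Y\leftarrow C_Y$ along a right $\CC$-approximation, obtains the $(\CC,-)$-conflation $E\infl X\oplus C_Y\defl Y$, then pushes out along a left $\CC$-approximation $E\defl C^E$ to get a genuine $\CC$-conflation $E\infl C^E\oplus X\oplus C_Y\defl F$; the resulting factorization in $\EE/[\CC]$ is $\u{X}\defl\u{F}\to\u{Y}$, and the monicity of $\u{F}\to\u{Y}$ needs a separate diagram chase since $F\to Y$ is not a monomorphism of $\mod\Lambda$. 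A smaller omission occurs one step earlier: Proposition \ref{di} descends the exact structure only \emph{provided} $\pi$ sends $\CC$-inflations to monomorphisms and $\CC$-deflations to epimorphisms; the first is the dual of Proposition \ref{mono}, but the second (Proposition \ref{proof1}) requires its own argument using the $(-,\CC)$-exactness of the conflation together with closure of $\CC$ under submodules, which you do not supply.
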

We omit the dual result for the case when $\CC$ is closed under quotients.
Note that this was proved in \cite[Proposition 5.5]{tau4} in case $\CC = \mod (\Lambda / I)$ for some two-sided ideal $I$ and $(\mod\Lambda)/[\CC]$ is of finite type.
\begin{remark}
Let $\CC$ be a subcategory of $\mod\Lambda$ which is closed under submodules. By \cite[Proposition 4.7]{as}, the following are equivalent.
\begin{enumerate}
\item $\CC$ is functorially finite.
\item $\CC$ is contravariantly finite.
\item $\CC = \Sub M$ for some $M \in \mod\Lambda$.
\end{enumerate}
Here $\Sub M$ is the subcategory consisting of all modules cogenerated by $M$, in other words, the smallest additive subcategory containing $M$ which is closed under submodules.
\end{remark}

To begin the proof, the following elementary observation is useful. In what follows, we denote by $\pi(f) : \underline{L} \to \underline{M}$ in $\EE/[\CC]$ the image of $f:L \to M$ in $\EE$ under the natural functor $\pi:\EE \defl \EE/[\CC]$.

\begin{proposition}\label{mono}
Let $\Lambda$ be a right noetherian ring. The following are equivalent for an additive subcategory $\CC$ of $\mod\Lambda$.
\begin{enumerate}
\item $\CC$ is closed under quotient modules.
\item For every $X$ in $\mod\Lambda$, there exists a right $\CC$-approximation $C_X \infl X$ which is an injection.
\item $\CC$ is contravariantly finite, and epimorphisms are preserved by the natural functor $\pi:\mod\Lambda \defl (\mod \Lambda) / [\CC]$. 
\end{enumerate}
\end{proposition}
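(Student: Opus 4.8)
The plan is to establish the cyclic implications (1) $\Rightarrow$ (2) $\Rightarrow$ (3) $\Rightarrow$ (1). The only point where right noetherianity is used is the observation that every $X \in \mod\Lambda$ is a noetherian module, so the submodules of $X$ satisfy the ascending chain condition; I would exploit this in the first implication and nowhere else.

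For (1) $\Rightarrow$ (2), I would fix $X \in \mod\Lambda$ and consider the family $\mathscr{S}$ of submodules of $X$ of the form $\im(\alpha)$ with $\alpha \colon C \to X$ and $C \in \CC$. Since $\CC$ is additive, $\mathscr{S}$ is closed under finite sums, because $\im(\alpha) + \im(\alpha') = \im\bigl((\alpha,\alpha')\colon C \oplus C' \to X\bigr)$; by the ascending chain condition $\mathscr{S}$ has a maximal member $C_X = \im(\alpha_0)$ for some $\alpha_0 \colon C_0 \to X$. Then $C_X$ is a quotient of $C_0 \in \CC$, hence lies in $\CC$ by hypothesis (1), and the inclusion $C_X \hookrightarrow X$ is a monomorphism; maximality forces every $\alpha \colon C' \to X$ with $C' \in \CC$ to have image contained in $C_X$, so it factors through the inclusion. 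Thus $C_X \hookrightarrow X$ is the required injective right $\CC$-approximation, and as a by-product $\CC$ is contravariantly finite.

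For (2) $\Rightarrow$ (3), contravariant finiteness is immediate from (2), so the point is that $\pi$ preserves epimorphisms. Given an epimorphism $f \colon X \to Y$ in $\mod\Lambda$, I would take an arbitrary $g \colon Y \to Z$ with $\pi(g)\,\pi(f) = 0$, i.e.\ $gf$ factoring through some object of $\CC$. Choosing an injective right $\CC$-approximation $C_Z \hookrightarrow Z$ as in (2), one sees that $gf$ factors through $C_Z \hookrightarrow Z$, so $\im(gf) \subseteq C_Z$ inside $Z$; since $f$ is surjective $\im(gf) = \im(g)$, whence $\im(g) \subseteq C_Z$ and $g$ factors through $C_Z \in \CC$, giving $\pi(g) = 0$. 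As $Z$ was arbitrary, precomposition with $\pi(f)$ is injective on all Hom-groups of $(\mod\Lambda)/[\CC]$, so $\pi(f)$ is an epimorphism.

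For (3) $\Rightarrow$ (1), given $C \in \CC$ and an epimorphism $p \colon C \twoheadrightarrow Q$ in $\mod\Lambda$, I would apply (3) to conclude that $\pi(p)$ is an epimorphism; but $\pi(C) = 0$ since $\mathrm{id}_C$ factors through $C \in \CC$, so $\pi(Q) = 0$, which means $\mathrm{id}_Q$ factors through an object of $\CC$. Hence $Q$ is a direct summand of an object of $\CC$ and therefore lies in $\CC$ by closure under summands. The step I expect to be the main obstacle is (1) $\Rightarrow$ (2): it is where the noetherian hypothesis is genuinely needed, and one must check carefully that $\mathscr{S}$ is closed under finite sums and that a maximal image is automatically a right approximation. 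The other spot requiring care is the image-chasing in (2) $\Rightarrow$ (3), specifically the passage from ``$gf$ factors through $\CC$'' to ``$\im(g) \subseteq C_Z$'', which uses both surjectivity of $f$ and the fact that the chosen approximation is a monomorphism.
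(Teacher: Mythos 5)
Your proposal is correct and follows essentially the same route as the paper: an injective right approximation built from the trace of $\CC$ in $X$ using noetherianity for (1)$\Rightarrow$(2), the factorization of $gf$ through the monic approximation for (2)$\Rightarrow$(3), and the vanishing of $\underline{Q}$ for (3)$\Rightarrow$(1). The only cosmetic differences are that the paper obtains $C_X$ as the image of the sum of \emph{all} maps from $\CC$ (finitely generated since $\Lambda$ is right noetherian) rather than as a maximal image via the ascending chain condition, and replaces your image-containment argument in (2)$\Rightarrow$(3) by an equivalent diagram chase.
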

\begin{proof}
(1) $\Rightarrow$ (2):
Let $X$ be in $\mod\Lambda$. Consider the direct sum $\bigoplus_{C,f} C \to X$ where $C$ runs over isomorphism classes of objects in $\CC$ and $f$ runs over all morphisms $f:C \to X$. Write $C_X$ for its image. Since $\Lambda$ is assumed to be right noetherian, $C_X$ is a quotient module of finite direct sum of objects in $\CC$, which implies that $C_X$ is also in $\CC$. It is easy to see that $C_X \infl X$ is a right $\CC$-approximation.

(2) $\Rightarrow$ (3):
Contravariantly finiteness is obvious.
Let $0 \to L \xrightarrow{f} M \xrightarrow{g}  N \to 0$ be a short exact sequence in $\mod\Lambda$. We will see that $\pi(g)$ is epic. So let $\varphi:N \to X$ be a morphism in $\mod\Lambda$ such that $\pi (\varphi g)=\pi(\varphi) \pi(g)=0$ in $(\mod\Lambda)/[\CC]$. 
By the definition of the quotient category, it follows that $\varphi g$ factors through some object in $\CC$. It follows that $\varphi g$ factors through the right $\CC$-approximation $i:C_X \infl X$. Thus there exists a morphism $h:M \to C_X$ which makes the following diagram commute.
\[
\xymatrix{
 0 \ar[r] & L \ar[r]^{f} & M \ar[r]^{g}\ar[d]_{h} & N \ar[r]\ar[d]^{\varphi} & 0 \\
 & & C_X \inflr^{i}  & X & 
 }
\]
Since $i h f = \varphi g f =0$ and $i$ is monic, we have $h f=0$. Thus there exists a morphism $k:N \to C_X$ such that $h = k g$. Then $i k g = i h = \varphi g$ holds, which implies that $i k = \varphi$ since $g$ is epic. Therefore $\pi(\varphi)=0$ holds in $\EE/[\CC]$, which proves that $\pi(g)$ is epic.

(3) $\Rightarrow$ (1):
Suppose that $C$ is in $\CC$ and $f:C \defl X$ is a surjection. It follows from (3) that $\pi(f):\u{C} \to \u{X}$ is epimorphism in $\EE/[\CC]$. However $\u{C}$ is a zero object in $\EE/[\CC]$, thus $\u{X} \iso 0$, that is, $X \in \CC$.
\end{proof}
Note that if $\Lambda$ is a right artinian ring, then the dual of Proposition \ref{mono} holds.

To prove Theorem \ref{main1}, we have to introduce a new exact structure given by $\CC$-conflations on $\mod\Lambda$, which induces the desired exact structure on $(\mod\Lambda)/[\CC]$. 
For the notion of $(-,\CC)$-conflations, $(\CC,-)$-conflations and $\CC$-conflations, we refer the reader to Appendix B.

Now let us prove Theorem \ref{main1}. We divide the proof into several propositions. \emph{For the rest of this subsection, we always assume that $\Lambda$ is an artin $R$-algebra and that $\CC$ is a functorially finite subcategory of $\EE:=\mod\Lambda$ which is closed under submodules.}
\begin{proposition}\label{proof1}
Endow $\EE$ with an exact structure $\EE_\CC$. Then $\EE/[\CC]$ naturally inherits an exact structure from $\EE_\CC$ as in Proposition \ref{di}.
\end{proposition}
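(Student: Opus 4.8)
The plan is to deduce the statement from Proposition~\ref{di} of Appendix~B, which says that the ideal quotient of an exact category $\EE'$ by a full subcategory contained in $\PP(\EE')\cap\II(\EE')$ again carries a natural exact structure, whose conflations are the images of conflations of $\EE'$. Thus the whole proof reduces to checking that the pair $(\EE_\CC,\CC)$ satisfies this hypothesis.

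\textbf{Step 1: $\EE_\CC$ is an exact structure.} First I would recall, from the definitions in Appendix~B, that $\EE_\CC$ is genuinely an exact structure on $\EE=\mod\Lambda$: the $(\CC,-)$-conflations are the short exact sequences kept exact by every functor $\EE(C,-)$ with $C\in\CC$, and dually for the $(-,\CC)$-conflations. A class of short exact sequences cut out by the exactness of a family of (co)representable functors is automatically closed under pushout, pullback and composition of deflations, hence is an exact structure, and $\EE_\CC$, being the intersection of the two, is one as well. Note that this step uses nothing about $\CC$ beyond its being a subcategory; the functorial finiteness of $\CC$ is primarily needed for the later parts of Theorem~\ref{main1} (to secure enough relative projectives and injectives) and plays at most an auxiliary role here.

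\textbf{Step 2: $\CC\subseteq\PP(\EE_\CC)\cap\II(\EE_\CC)$.} This is the crucial observation, and it is immediate from the definitions: a short exact sequence in $\mod\Lambda$ is an $\EE_\CC$-conflation if and only if $\EE(C,-)$ and $\EE(-,C)$ are exact on it for every $C\in\CC$, which is precisely the assertion that every $C\in\CC$ is both projective and injective in $\EE_\CC$. Moreover $\CC$ is a full additive subcategory closed under isomorphisms and direct summands: by the remark following Theorem~\ref{main1}, a functorially finite subcategory closed under submodules equals $\Sub M$ for some $M\in\mod\Lambda$, and $\Sub M$ is visibly closed under finite direct sums and summands. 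Hence Proposition~\ref{di} applies to $\EE'=\EE_\CC$ and the subcategory $\CC$, producing the exact structure on $\EE/[\CC]$ claimed in the statement, in which $\u{L}\to\u{M}\to\u{N}$ is a conflation exactly when it is isomorphic in $\EE/[\CC]$ to the image under $\pi$ of an $\EE_\CC$-conflation.

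\textbf{Expected difficulty.} I do not anticipate a genuine obstacle; the only thing requiring care is bookkeeping: confirming that the definition of $\CC$-conflation spelled out in Appendix~B matches verbatim the projective-injective hypothesis of Proposition~\ref{di}, and that $\CC$ has the mild closure properties that proposition requires. It is worth recording, as a sanity check, that the quotient remains in the world of exact (rather than triangulated) categories precisely because $\CC$ is only contained in, and generally far from all of, $\PP(\EE_\CC)\cap\II(\EE_\CC)$ — e.g. the projective $\Lambda$-modules stay $\EE_\CC$-projective but are not annihilated by $\pi$.
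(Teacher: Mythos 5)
There is a genuine gap, and it lies in how you invoke Proposition~\ref{di}. That proposition is not of the form ``if every object of $\CC$ is projective and injective, then $\EE/[\CC]$ inherits an exact structure''; it is an \emph{equivalence}, under that standing hypothesis, between (1) the quotient being exact with conflations the images of conflations, and (2) the condition that $\pi$ sends inflations to monomorphisms and deflations to epimorphisms. Your Step~2 correctly checks the standing hypothesis ($\CC\subseteq\PP(\EE_\CC)\cap\II(\EE_\CC)$ is immediate from the definition of a $\CC$-conflation), but then concludes that Proposition~\ref{di} ``produces'' the exact structure. It does not: you still have to verify condition~(2), and that verification is the entire content of the paper's proof. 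Condition~(2) can genuinely fail for an arbitrary additive subcategory of projective-injective objects, which is precisely why \cite{di} states the result as an equivalence.

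Verifying~(2) is where the hypotheses you set aside come in, so your remark that functorial finiteness of $\CC$ ``plays at most an auxiliary role here'' is a symptom of the misreading. For $\CC$-inflations, the paper uses the dual of Proposition~\ref{mono}: since $\Lambda$ is artinian and $\CC$ is closed under submodules, every module has a surjective left $\CC$-approximation, and this is equivalent to $\pi$ preserving monomorphisms. For $\CC$-deflations $M\defl N$, one takes a morphism $\varphi\colon N\to X$ with $\pi(\varphi)\pi(M\defl N)=0$, factors $M\to N\to X$ through the left $\CC$-approximation $M\to C^M$, uses closure of $\CC$ under submodules to see that the kernel $K$ of $C^M\to X$ lies in $\CC$, and then uses the $(-,\CC)$-exactness of the conflation to lift $L\to K$ through $L\to M$ and conclude $\pi(\varphi)=0$. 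None of this is bookkeeping; without covariant finiteness and closure under submodules the statement is false in the intended generality. To repair your proof you must add this verification of condition~(2) of Proposition~\ref{di}.
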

\begin{proof}
Denote by $\pi:\EE \defl \EE/[\CC]$ the natural functor. According to Proposition \ref{di}, it suffices to show that the image of every $\CC$-inflation under $\pi$ is a monomorphism and the image of every $\CC$-deflation is an epimorphism.
The assertion for $\CC$-inflations follows from the dual of Proposition \ref{mono}.

Let $0 \to L \to M \to N \to 0$ be a $\CC$-conflation. We show that $\u{M} \to \u{N}$ is an epimorphism in $\EE/[\CC]$. Suppose that $N \to X$ is a morphism in $\EE$ such that $\u{M} \to \u{N} \to \u{X}$ is zero. Then the composition $M \to N \to X$ factors through the left $\CC$-approximation $M \to C^M$. This yields the following commutative diagram
\[
\xymatrix{
0 \ar[r] & L \ar[r] \ar@{.>}[d] & M \ar[r]\ar[d] & N \ar[r]\ar[d] & 0 \\
0 \ar[r] & K \ar[r] & C^M \ar[r] & X &
}
\]
with exact rows. Since $\CC$ is closed under submodules, $K$ is in $\CC$. Because $L \infl M \defl N$ is $(-,\CC)$-conflation, $L \to K$ factors through $L \to M$. Then it is routine to check that $N \to X$ factors through $C^M\to X$, thus $\u{N} \to \u{X}$ is zero in $\EE/[\CC]$ as desired. 
\end{proof}

\begin{proposition}\label{proof2}
The exact category $\EE/[\CC]$ has $0$-kernels.
\end{proposition}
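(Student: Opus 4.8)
Recall from Proposition~\ref{proof1} that $\EE/[\CC]$ carries the exact structure induced from $\EE_\CC$ via Proposition~\ref{di}, so that its deflations are exactly (up to isomorphism in $\EE/[\CC]$) the morphisms $\pi(d)$ for $d$ a $\CC$-deflation in $\mod\Lambda$. We must show that every morphism of $\EE/[\CC]$ factors as such a deflation followed by a monomorphism. Note that $(\mod\Lambda,\EE_\CC)$ itself generally does \emph{not} have $0$-kernels, so the passage to the quotient is essential.

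The plan is first to reduce the problem. Given $\pi(f)\colon\u L\to\u M$ represented by $f\colon L\to M$ in $\mod\Lambda$, factor $f=\iota p$ in the abelian category $\mod\Lambda$ with $p\colon L\twoheadrightarrow\im f$ and $\iota\colon\im f\hookrightarrow M$. Since $\CC$ is closed under submodules, the dual of Proposition~\ref{mono} shows that $\pi$ preserves monomorphisms, so $\pi(\iota)$ is a monomorphism in $\EE/[\CC]$; hence it suffices to prove that $\pi(p)$ is a deflation in $\EE/[\CC]$, and more generally that $\pi(p)$ is a deflation for \emph{every} module epimorphism $p\colon L\twoheadrightarrow X$.

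To do this I would pass through the relative homological algebra of $\EE_\CC$ in the style of Auslander-Solberg (\cite{as1,as2,as3,drss}, Appendix~B), exploiting that $\CC$ is not merely functorially finite but a \emph{torsionfree class}, hence closed under extensions. Two ingredients drive the argument: (i) by the dual of Proposition~\ref{mono}, left $\CC$-approximations can be taken surjective; and (ii) if $e\colon K\twoheadrightarrow C_K$ is a surjective left $\CC$-approximation and $K_0:=\ker e$, then $\Hom_\Lambda(K_0,\CC)=0$ --- indeed, pushing out $0\to K_0\to K\xrightarrow{e}C_K\to 0$ along any $\beta\colon K_0\to C$ with $C\in\CC$ gives a conflation $0\to C\to E\to C_K\to 0$ with $E\in\CC$ by extension-closure, and then the induced map $K\to E$ factors through the left $\CC$-approximation $e$, which forces the composite $K_0\xrightarrow{\beta}C\hookrightarrow E$, and hence $\beta$, to vanish. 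Taking $K:=\ker p$, I would use (i)--(ii) to build a $\CC$-conflation $0\to A\to B\xrightarrow{b}X\to 0$ together with a comparison morphism $q\colon L\to B$ over $X$ (concretely $B=L/K_0$, $b$ the projection onto $L/K=X$, and $A=\ker b=K/K_0\cong C_K\in\CC$) and argue, using $\Hom_\Lambda(K_0,\CC)=0$, that $\pi(q)$ becomes an isomorphism $\u L\xrightarrow{\sim}\u B$ in $\EE/[\CC]$. Then $\pi(p)=\pi(b)\circ\pi(q)$ is a deflation, and together with the reduction above this proves that $\EE/[\CC]$ has $0$-kernels.

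The main obstacle I expect is precisely this last construction: arranging that $0\to A\to B\xrightarrow{b}X\to 0$ is a genuine $\CC$-conflation --- exact against both $\Hom_\Lambda(-,\CC)$ and $\Hom_\Lambda(\CC,-)$, not merely one of them --- while keeping $\u B\cong\u L$ compatibly with $\pi(p)$. This is the only place where more than functorial finiteness of $\CC$ is used: the torsionfree hypothesis enters through the vanishing in~(ii), together with the surjectivity of left $\CC$-approximations, and everything else in the proof is formal bookkeeping about ideal quotients and the induced exact structure.
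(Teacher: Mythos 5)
Your opening reduction to a surjection $f$ via the dual of Proposition~\ref{mono} matches the paper's first step, but from there the argument has a genuine gap, rooted in a misreading of the hypotheses. In Theorem~\ref{main1} the subcategory $\CC$ is only assumed to be functorially finite and closed under \emph{submodules}; it is \emph{not} assumed to be closed under extensions (the phrase ``torsionfree class'' in the section title refers to the target $^\perp U\subset\mod\Gamma$, not to $\CC$ itself --- e.g.\ $\CC=\add S$ for a simple module $S$ is a legitimate choice and is usually not extension-closed). Your ingredient (ii) uses extension-closure essentially, to conclude that the pushout $E$ lies in $\CC$, and without it the vanishing $\Hom_\Lambda(K_0,\CC)=0$ is simply false: take $\Lambda=k[x]/(x^2)$, $\CC=\add S$ with $S$ the simple module, and $K=\Lambda$ (which occurs as $\ker p$ for suitable epimorphisms $p$); the surjective left $\CC$-approximation is the projection $\Lambda\twoheadrightarrow S$, so $K_0=\rad\Lambda\cong S$ and $\Hom_\Lambda(K_0,S)\neq 0$. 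With (ii) gone there is no support for the claim that $\pi(q)\colon\u{L}\to\u{B}$ is an isomorphism, and the construction collapses.

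There is also a structural problem independent of (ii): you aim to show that $\pi(p)$ is itself a deflation onto $\u{\im f}$, and to that end you only modify the \emph{kernel} side of $0\to K\to L\to X\to 0$. But a deflation in $\EE/[\CC]$ must come from a $\CC$-conflation, and $(\CC,-)$-exactness --- every map $C\to X$ with $C\in\CC$ must lift through the deflation --- is a condition governed by the target that your sequence $0\to C_K\to L/K_0\to X\to 0$ has no reason to satisfy. This is precisely why the paper's proof first pulls back along a right $\CC$-approximation $C_Y\to Y$ (which forces $(\CC,-)$-exactness), then pushes out along a left $\CC$-approximation of the resulting object $E$ (which forces $(-,\CC)$-exactness while preserving the first condition), and accepts that the resulting deflation lands in a new object $\u{F}$ that maps to $\u{Y}$ only by a monomorphism, not an isomorphism. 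That weaker conclusion is exactly what the definition of $0$-kernels requires, and your stronger claim that every module epimorphism becomes a deflation in $\EE/[\CC]$ should not be expected to hold.
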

\begin{proof}
We have to show that every morphism $\pi(f):\underline{X} \to \underline{Y}$ in $\EE/[\CC]$ can be factored as a $\CC$-deflation followed by a monomorphism.

Let $f:X \to Y$ be a morphism in $\EE=\mod\Lambda$. Then we have the factorization $X \defl \im f \infl Y$. Note that $\underline{\im f} \to \underline{Y}$ is monic by the dual of Proposition \ref{mono}. Thus it suffices to show that $\u{X} \to \u{\im f}$ can be factored as a $\CC$-deflation followed by a monomorphism. Therefore we may assume that $f$ is a surjection.

Let $C_Y \to Y$ be a right $\CC$-approximation. Consider the pullback diagram
\[
\xymatrix{
0 \ar[r] & K \ar@{=}[d] \ar[r] & E \ar[r] \ar[d] & C_Y \ar[d] \ar[r] & 0\\
0 \ar[r] & K \ar[r] & X \ar[r] & Y \ar[r] & 0
}
\]
in $\EE$. Then the right square gives a short exact sequence
\[
0 \to E \to X \oplus C_Y \to Y \to 0
\]
in $\EE$. Since $C_Y \to Y$ is a right $\CC$-approximation, clearly this sequence is a $(\CC,-)$-conflation. To transform it into a $\CC$-conflation, take a left $\CC$-approximation $E \defl C^E$ and consider the pushout
\begin{eqnarray}\label{goccha}
\xymatrix{
0 \ar[r] & E \defld \ar[r] & X \oplus C_Y \defld^a \ar[r]^{c a} & Y \ar[r] \ar@{=}[d] & 0\\
0 \ar[r] & C^E \ar[r]^b & F \ar[r]^c & Y \ar[r] & 0
}
\end{eqnarray}
in $\EE$. The left square gives a short exact sequence
\begin{equation}\label{goccha2}
0 \to E \to C^E \oplus X \oplus C_Y \to F \to 0
\end{equation}
in $\EE$, which is a $(-,\CC)$-conflation since $E\defl C^E$ is a left $\CC$-approximation. Since $c a$ is a $(\CC,-)$-deflation, a diagram chase for (\ref{goccha}) shows that (\ref{goccha2}) is also a $(\CC,-)$-conflation. Thus (\ref{goccha2}) is a $\CC$-conflation. Because $\underline{C^E}$ and $\underline{C_Y}$ is zero in $\EE/[\CC]$, we obtain a deflation $\pi(g): \underline{X} \defl \underline{F}$ in $\EE_\CC/[\CC]$, where we write $a = [g,h]$.

By the commutativity of the right square in (\ref{goccha}), we have $\pi(f) = \pi(c) \pi(g)$.
Since $\pi(g)$ is a deflation, it suffices to show that $\pi(c):\underline{F} \to \underline{Y}$ is monic in $\EE/[\CC]$.

Let $\varphi:Z\to F$ be a morphism such that the composition $c \varphi$ factors through some objects in $\CC$. Since $C_Y \to Y$ is a right $\CC$-approximation, it follows that $c \varphi$ factors through $C_Y \to Y$. Thus we have a map $d:Z \to C_Y$ such that $c \varphi = c a \circ{}^t [0,d]$. This implies that there exists a map $e:Z \to C^E$ such that $\varphi - a \circ{}^t [0,d] = b e$, hence $\varphi = a \circ{}^t [0,d] + b e$. Since the images of $^t [0,d]$ and $b$ under $\pi$ are zero in $\EE/[\CC]$, it follows that $\pi(\varphi)=0$ in $\EE/[\CC]$, which shows that $\pi(c)$ is monic.
\end{proof}
Now we are in position to finish the proof of Theorem \ref{main1}.
\begin{proof}[Proof of Theorem \ref{main1}]
(1): The assertion follows from Proposition \ref{proof1} and \ref{proof2}.

(2):
We basically follow the idea of \cite{as1}.
It is well-known that a short exact sequence $0 \to L \to M \to N \to 0$ is $(-,X)$-exact if and only if $(\tau^- X,-)$-exact (see, e.g. \cite[Corollary 4.4]{ARS}). Hence this sequence is $\CC$-exact if and only if it is $(\add (\CC, \tau^- \CC),-)$-exact. It was shown in \cite[Theorem 1.14]{as1} that this exact structure has enough projectives and injectives if and only if $\add (\CC, \tau^- \CC)$ is functorially finite. Since $\CC$ is functorially finite in $\mod\Lambda$, it follows that so is $\tau^- \CC$. Thus it is clear that $\add (\CC, \tau^- \CC)$ is functorially finite. Therefore $\EE_\CC$ has enough projectives and injectives. By Theorem \ref{di}, $\EE_\CC/[\CC]$ also has enough projectives and injectives, thus (2) holds.

(3):
Note that projective (resp. injective) objects in $\EE_\CC$ are precisely objects in $\add ( \Lambda, \CC, \tau^- \CC )$ (resp. $\add ( D\Lambda, \CC, \tau \CC )$). Thus Theorem \ref{di} immediately gives (3).

(4):
This is clear from (3).

(5):
By using the Morita embedding $\Hom_{\EE/[\CC]}(M,-):\EE/[\CC] \to \mod\Gamma$, the assertion is immediate from Corollary \ref{artincase}.
\end{proof}

As an immediate consequence, we have the following.
\begin{corollary}
Let $\Lambda$ be an artin $R$-algebra and $S$ a semisimple module in $\mod\Lambda$. Put $\EE := (\mod \Lambda) /[\add S]$ and $\Gamma := \End_\EE (\Lambda \oplus \tau^- S)$. Then $U := \EE(\Lambda \oplus \tau^- S, D \Lambda \oplus \tau S) \in \mod\Gamma$ is a $1$-cotilting module and  we have an equivalence
\begin{equation}\label{apr}
\EE(\Lambda \oplus \tau^- S,-): \EE \equi {}^\perp U.
\end{equation}
\end{corollary}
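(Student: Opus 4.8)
The statement is an immediate specialization of Theorem \ref{main1} to the subcategory $\CC := \add S$, so the plan is simply to check the hypotheses and match up the data. First I would verify that $\add S$ is functorially finite in $\mod\Lambda$ and closed under submodules. Since $S$ is semisimple it is a finite direct sum of simple modules, so $\add S$ has only finitely many indecomposable objects up to isomorphism; hence it is of finite type and in particular functorially finite. Any submodule of a semisimple module is again semisimple with composition factors among those of $S$, so $\add S$ is closed under submodules (indeed $\add S = \Sub S$). Thus Theorem \ref{main1} applies with the category called $\EE$ there taken to be $\mod\Lambda$ and $\CC = \add S$; note that in the present statement $\EE$ instead denotes the quotient $(\mod\Lambda)/[\add S]$, which is the category called $\EE/[\CC]$ in Theorem \ref{main1}.

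By parts (1) and (2) of that theorem, $(\mod\Lambda)/[\add S]$ carries an exact structure with $0$-kernels, enough projectives $\PP$ and enough injectives $\II$. Part (3) identifies $\PP$ with the image of $\add\{\Lambda,\tau^-(\add S)\} = \add(\Lambda\oplus\tau^- S)$ under the quotient functor $\pi$, and $\II$ with the image of $\add(D\Lambda\oplus\tau S)$. Since the subcategory of projective (resp. injective) objects of any exact category is closed under summands and $\pi$ is additive, this upgrades to $\PP = \add\pi(\Lambda\oplus\tau^- S)$ and $\II = \add\pi(D\Lambda\oplus\tau S)$. Because $\tau^- S$ and $\tau S$, like $S$, have only finitely many indecomposable summands, both $\PP$ and $\II$ are of finite type, so the hypothesis of part (5) is satisfied. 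I would then take $M := \Lambda\oplus\tau^- S$ and $N := D\Lambda\oplus\tau S$, so that $\PP = \add\pi(M)$ and $\II = \add\pi(N)$.

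Finally, part (5) of Theorem \ref{main1} yields that $U := \EE(M,N) = \EE(\Lambda\oplus\tau^- S,\, D\Lambda\oplus\tau S)$ — writing $\EE$ now for $(\mod\Lambda)/[\add S]$ as in the statement — is a $1$-cotilting module over $\Gamma := \End_\EE(M) = \End_\EE(\Lambda\oplus\tau^- S)$, and that the Morita embedding $\Hom_\EE(M,-) = \EE(\Lambda\oplus\tau^- S,-)$ is an exact equivalence onto a torsionfree class $^\perp U \subset \mod\Gamma$; this $U$, this $\Gamma$ and this equivalence are precisely those in the corollary. There is no genuine obstacle here beyond bookkeeping: the points deserving a little care are the clash of notation for $\EE$ between Theorem \ref{main1} and its corollary, and the step from ``$\PP$ equals the image of $\add(\Lambda\oplus\tau^- S)$ under $\pi$'' to ``$\PP = \add\pi(\Lambda\oplus\tau^- S)$'', which, as indicated, uses only that projective objects are closed under summands.
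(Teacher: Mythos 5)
Your proof is correct and follows exactly the route the paper intends: the corollary is stated there as an immediate consequence of Theorem \ref{main1}, and you have simply verified the hypotheses for $\CC=\add S$ (functorial finiteness, closure under submodules, finite type of $\PP$ and $\II$) and matched up the data. The small points you flag — the notation clash for $\EE$ and passing from the essential image to its $\add$-closure via closure of projectives under summands — are handled correctly.
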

If $S$ is simple projective and not injective, then $\EE$ is shown to be equivalent to the subcategory of $\mod\Lambda$ consisting of modules $M$ such that $\Hom_\Lambda(M,S)=0$. In this case, $\Gamma$ is an APR tilt of $\Lambda$ at $S$ and $U$ is the corresponding APR cotilting $\Gamma$-module (see \cite[Example 2.8(c)]{ASS}) and the above equivalence (\ref{apr}) coincides with the classical equivalence \cite[Theorem 3.8]{ASS}.

Historically, the notion of APR tilts was generalized to tilting modules, which induce an embedding of a \emph{subcategory} of $\mod\Lambda$ into another module category by the Brenner-Butler theorem. On the other hand, this corollary (and our main theorem) gives an embedding of a certain \emph{quotient category} of $\mod\Lambda$ into another module category. Thus our results can be regarded as another generalization of APR tilts.

\begin{example}
Let $\Lambda$ be a self-injective Nakayama algebra of Loewy length four given by the cyclic quiver with three vertices over a field $k$. Let $S$ be a simple module corresponding the white dot in the following Auslander-Reiten quiver of $\mod \Lambda$
\[ \begin{tikzpicture}[scale=.65,yscale=-.8]
 \fill [fill1] (-0.5,1) -- (1,-0.5) -- (2,0.5) -- (3,-0.5) -- (4,0.5) -- (5,-0.5) -- (6.5,1) -- (4,3.5) -- (3,2.5) -- (2,3.5) -- cycle;

 \draw [->, very thick] (0,3.5) -- (0,-0.5);
 \draw [->, very thick] (6,3.5) -- (6,-0.5);
 
 \node (213) at (0,1) [vertex] {};
 \node (1) at (0,3) [cvertex] {};
 \node (3213) at (1,0) [tvertex] {3};
 \node (21) at (1,2) [vertex] {};
 \node (321) at (2,1) [vertex] {};
 \node (2) at (2,3) [tvertex] {4};
 \node (1321) at (3,0) [tvertex] {1};
 \node (32) at (3,2) [vertex] {};
 \node (132) at (4,1) [vertex] {};
 \node (3) at (4,3) [vertex] {};
 \node (2132) at (5,0) [tvertex] {2};
 \node (13) at (5,2) [vertex] {};
 \node (n213) at (6,1) [vertex] {};
 \node (n1) at (6,3) [cvertex] {};
 
 \draw [dashed] (1) --  (2);
 \draw [dashed] (2) -- (n1);
 \draw [dashed] (0,2) -- (6,2);
 \draw [dashed] (213) -- (n213);
 
 \draw [->] (1) -- (21);
 \draw [->] (21) -- (2);
 \draw [->] (2) -- (32);
 \draw [->] (32) -- (3);
 \draw [->] (3) -- (13);
 \draw [->] (13) -- (n1);
 \draw [->] (213) -- (21);
 \draw [->] (21) -- (321);
 \draw [->] (321) -- (32);
 \draw [->] (32) -- (132);
 \draw [->] (132) -- (13);
 \draw [->] (13) -- (n213);
 \draw [->] (213) -- (3213);
 \draw [->] (3213) -- (321);
 \draw [->] (321) -- (1321);
 \draw [->] (1321) -- (132);
 \draw [->] (132) -- (2132);
 \draw [->] (2132) -- (n213);

\end{tikzpicture} \]
where we identify two vertical arrows. Then obviously $\CC := \add S$ is closed under submodules. The full translation subquiver consisting of the shaded areas gives the Auslander-Reiten quiver of $(\mod \Lambda)/[\CC]$.

Then $M$ in Theorem \ref{main1} corresponds to the numbered vertices and $\Gamma$ is given by the quiver
\[
\xymatrix{
1 \ar[rd]_{\delta} & 2 \ar[l]^{\varepsilon} \ar@/^/[r]^{\alpha} & 4 \ar@/^/[l]^{\beta} \\
 & 3\ar[u]_{\gamma} & &
 }
 \]
and relation $\beta \alpha = \gamma \alpha = \delta \gamma \varepsilon = \beta \varepsilon = \alpha \beta - \varepsilon \delta \gamma =0$. The following diagram gives the Auslander-Reiten quiver of $\mod\Gamma$
\[ \begin{tikzpicture}[scale=.65,yscale=-.8]
 \fill [fill1] (-0.5,1) -- (1,-0.5) -- (1.5,0) -- (0,1.5) -- cycle;
 \fill [fill1] (1.5,3) -- (4,0.5) -- (5,1.5) -- (6,2) -- (6.5,3) -- (5,4.5) -- (4,3.5) -- (3,4.5) -- cycle;
 \fill [fill1] (7.5,1) -- (8,0.5) -- (8.5,1) -- (8,1.5) -- cycle;
 \draw [->, very thick] (0,4.5) -- (0,-0.5);
 \draw [->, very thick] (8,4.5) -- (8,-0.5);
 
 \node (213) at (0,1) [vertex] {};
 \node (214) at (0,3) [vertex] {};
 \node (3213) at (1,0) [tvertex] {3};
 \node (21) at (1,2) [vertex] {};
 \node (24) at (1,4) [vertex] {};
 \node (321) at (2,1) [vertex] {};
 \node (2) at (2,3) [vertex] {};
 \node (32) at (3,2) [vertex] {};
 \node (42) at (3,4) [tvertex] {4};
 \node (132) at (4,1) [tvertex] {1};
 \node (342) at (4,3) [vertex] {};
 \node (1342) at (5,2) [vertex] {};
 \node (3) at (5,4) [vertex] {};
 \node (4) at (6,1) [vertex] {};
 \node (21342) at (6,2.5) [tvertex] {2};
 \node (13) at (6,3) [vertex] {};
 \node (2134) at (7,2) [vertex] {};
 \node (1) at (7,4) [vertex] {};
 \node (n213) at (8,1) [vertex] {};
 \node (n214) at (8,3) [vertex] {};

 \draw [dashed] (0,4) -- (24);
 \draw [dashed] (42) -- (8,4);
 \draw [dashed] (214) -- (n214);
 \draw [dashed] (0,2) -- (8,2);
 \draw [dashed] (213) -- (321);
 \draw [dashed] (132) -- (n213);

 \draw [->] (1) -- (n214);
 \draw [->] (214) -- (24);
 \draw [->] (24) -- (2);
 \draw [->] (2) -- (42);
 \draw [->] (42) -- (342);
 \draw [->] (342) -- (3);
 \draw [->] (3) -- (13);
 \draw [->] (13) -- (1);
 \draw [->] (2134) -- (n214);
 \draw [->] (214) -- (21);
 \draw [->] (21) -- (2);
 \draw [->] (2) -- (32);
 \draw [->] (32) -- (342);
 \draw [->] (342) -- (1342);
 \draw [->] (1342) -- (13);
 \draw [->] (13) -- (2134);
 \draw [->] (2134) -- (n213);
 \draw [->] (213) -- (21);
 \draw [->] (21) -- (321);
 \draw [->] (321) -- (32);
 \draw [->] (32) -- (132);
 \draw [->] (132) -- (1342);
 \draw [->] (1342) -- (4);
 \draw [->] (4) -- (2134);
 \draw [->] (213) -- (3213);
 \draw [->] (3213) -- (321);
 \draw [->] (1342) -- (21342);
 \draw [->] (21342) -- (2134);

\end{tikzpicture} \]
where we identify two vertical arrows. The shaded area corresponds to the subcategory $^\perp U$ of $\mod\Gamma$ in Theorem \ref{main1}. The above two shaded regions illustrate the equivalence $(\mod\Lambda)/[\CC] \equi {}^\perp U$.
\end{example}

\begin{remark}
We point out that the assumption \emph{$\CC$ is closed under submodules} in Theorem \ref{main1} cannot be replaced by \emph{$\CC$ is closed under images}. The simple example is as follows. Let $\Lambda$ be a path algebra of the quiver $1 \leftarrow 2 \leftarrow 3 \leftarrow 4 \leftarrow 5$ over a field $k$. Put $M := P(4)/S(1)$ where $P(4)$ is the indecomposable projective module corresponding to $4$ and $S(1)$ is the simple module corresponding to $1$. Then $\CC := \add M$ is easily checked to be closed under images. On the other hand, one can easily check that $(\mod \Lambda)/[\CC]$ does not have $0$-kernels, e.g. by using Proposition \ref{gldim}.
\end{remark}

\subsection{The category $\mod\Lambda$ as an exact category}
We assume that $\Lambda$ is an artin $R$-algebra. The category $\mod\Lambda$ has various exact structures given in Definition \ref{various}. An explicit classification was given in \cite[Proposition 3.3.2]{bu}.

It is natural to ask which exact structure has enough projectives or which has a projective generator. The answer of this was essentially given in \cite{as1} in terms of relative homological algebra. 
We call a subcategory $\MM$ of $\mod\Lambda$ a \emph{generating subcategory} if $\proj\Lambda \subset \MM = \add\MM$ holds. Dually we define a \emph{cogenerating subcategory}. A $\Lambda$-module $M$ is called a \emph{generator} (resp. \emph{cogenerator}) if $\add M$ is generating (resp. cogenerating).
\begin{proposition}\label{classical}
Let $\Lambda$ be an artin $R$-algebra.
\begin{enumerate}
\item There exists a bijection between exact structures on $\mod\Lambda$ with enough projectives and contravariantly finite generating subcategories of $\mod\Lambda$. It is given by sending an exact structure on $\mod\Lambda$ to the category of all projective objects, and the inverse map is given by sending a generating subcategory $\MM$ to $(\mod\Lambda)_{(\MM,-)}$.
\item The exact structure on $\mod\Lambda$ has a projective generator if and only if it has an injective cogenerator. If $G$ is a projective generator in this exact structure, then $C:= D \Lambda \oplus \tau M$ is an injective cogenerator.
\item The bijection of {\upshape (1)} restricts to a bijection between exact structures with projective generators and isomorphism classes of basic generators of $\mod\Lambda$.
\end{enumerate}
\end{proposition}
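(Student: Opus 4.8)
For part (1) the plan is to invoke the Auslander--Solberg theory of relative homological algebra \cite{as1}, together with the elementary remark used throughout this section that a conflation of an exact structure on the abelian category $\mod\Lambda$ is an honest short exact sequence (an inflation is monic, a deflation is epic, and a conflation is a kernel--cokernel pair). From \cite{as1} one has that, for a generating subcategory $\MM$, the $(\MM,-)$-conflations form an exact structure $(\mod\Lambda)_{(\MM,-)}$, and that this exact structure has enough projectives precisely when $\MM$ is contravariantly finite; it then remains to identify its projective objects with $\MM$ itself. One inclusion is immediate, since any $(\MM,-)$-conflation ending in an object of $\MM$ is $\Hom_\Lambda(\MM,-)$-exact and hence splits; for the other, a projective object $P$ receives a right $\MM$-approximation $M\to P$, which is surjective because $\proj\Lambda\subset\MM$, hence is an $(\MM,-)$-deflation, hence splits, so $P\in\add\MM=\MM$. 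Conversely, given any exact structure $\EE$ on $\mod\Lambda$ with enough projectives, I would check that $\PP:=\PP(\EE)$ is a contravariantly finite generating subcategory: it contains $\proj\Lambda$ because deflations are surjective, it is closed under summands and finite sums, and an $\EE$-deflation $P\defl N$ with $P\in\PP$ is a right $\PP$-approximation. Finally $\EE=(\mod\Lambda)_{(\PP,-)}$: a conflation is obviously $(\PP,-)$-exact, and if $0\to L\to M\xrightarrow{g}N\to 0$ is $(\PP,-)$-exact then a deflation $P\defl N$ with $P\in\PP$ factors through $g$, so $g$ is a deflation by \cite[Proposition A.1.(c)]{keller} and the sequence is a conflation. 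The two assignments are then mutually inverse by construction.

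For part (2), the key input is the identity recalled earlier in the section: a short exact sequence is $\Hom_\Lambda(-,X)$-exact if and only if it is $\Hom_\Lambda(\tau^- X,-)$-exact \cite[Corollary 4.4]{ARS}. Combining this with the facts that every short exact sequence is automatically $\Hom_\Lambda(\proj\Lambda,-)$-exact and $\Hom_\Lambda(-,D\Lambda)$-exact, and that $\tau^-\tau$ changes a module only in its projective summands, one obtains that for a generator $G$ the $(\add G,-)$-conflations coincide with the $(-,\add C)$-conflations, where $C:=D\Lambda\oplus\tau G$. Hence if $G$ is a projective generator of $\EE$ then $\EE=(\mod\Lambda)_{(-,\add C)}$; since $C$ is of finite type, $\add C$ is functorially finite and cogenerating, and the dual of part (1) shows that the injective objects of $\EE$ are precisely $\add C$, i.e.\ $C$ is an injective cogenerator. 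The converse implication is symmetric: an injective cogenerator $C$ gives $\EE=(\mod\Lambda)_{(\add G,-)}$ with $G=\Lambda\oplus\tau^- C$, which is a projective generator.

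For part (3) one simply restricts the bijection of (1): an exact structure has a projective generator exactly when the corresponding contravariantly finite generating subcategory is of the form $\add G$ for a module $G$, i.e.\ of finite type; and since subcategories of finite type are automatically functorially finite, these correspond bijectively to generating subcategories of finite type, hence to isomorphism classes of basic generators of $\mod\Lambda$. I expect the main obstacle to lie in part (2): one must keep careful track of the projective summands of $G$ and the injective summands of $C$ while applying the Auslander--Reiten translation, and verify that the finite-type hypothesis is exactly what upgrades the relative-projective (resp.\ relative-injective) subcategories furnished by \cite{as1} to genuine projective generators (resp.\ injective cogenerators). Once the dictionary between \cite{as1} and the language of exact structures is in place, the remaining verifications are routine.
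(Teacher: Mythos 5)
Your proposal is correct and follows essentially the same route as the paper, which simply cites Auslander--Solberg's relative homological algebra together with Theorem \ref{modify} and the $\Hom(-,X)$-exact $\Leftrightarrow$ $\Hom(\tau^-X,-)$-exact identity from \cite[Corollary 4.4]{ARS}; you have merely written out the details that the paper leaves to those references. (One minor point in your favour: the statement's ``$\tau M$'' is a typo for ``$\tau G$'', which you silently correct.)
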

\begin{proof}
This follows directly from results in \cite[Theorem 1.15]{as1} and \cite[Proposition 1.7]{drss} (or Theorem \ref{modify}).
\end{proof}
Hence the nontrivial generators yield nontrivial embeddings of $\mod\Lambda$ into another module category.
Our main results in this direction is the following.
This theorem was essentially proved in \cite[Proposition 3.26]{as2} by using the method of relative cotilting theory. We give an alternative proof of this theorem by using our results. 
\begin{theorem}\label{main2}
Let $\Lambda$ be an artin $R$-algebra and $G$ be a generator of $\mod\Lambda$. Put $C:= D\Lambda \oplus \tau G$, $\Gamma := \End_\Lambda(G)$ and $U := \Hom_\Lambda(G,C) \in \mod\Gamma$. Then the following hold.
\begin{enumerate}
\item $U$ is a cotilting $\Gamma$-module with $\id U =2$ or $0$. The case $\id U=0$ occurs only when $G$ is projective.
\item $\Hom_\Lambda(G,-):\mod\Lambda \to \mod\Gamma$ induces an equivalence $\mod\Lambda \equi {}^\perp U$.
\item $\mod\Lambda$ admits an exact structure such that projective objects are precisely objects in $\add G$ and the equivalence $\mod \Lambda \equi {}^\perp U$ is an exact equivalence.
\item $\End_\Lambda(G)$ and $\End_\Lambda(C)$ are derived equivalent.
\end{enumerate}
\end{theorem}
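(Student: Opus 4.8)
The plan is to equip $\mod\Lambda$ with the Auslander--Solberg relative exact structure determined by $\add G$ and then invoke Corollary \ref{artincase}. Concretely, put $\EE := (\mod\Lambda)_{(\add G,-)}$, the exact structure on $\mod\Lambda$ whose conflations are the $(\add G,-)$-exact sequences (Appendix B). Since $G$ is a generator, $\add G$ is a contravariantly finite generating subcategory, so by Proposition \ref{classical}(1) the exact category $\EE$ has $\add G$ as its subcategory of projective objects; thus $G$ is a projective generator, and by Proposition \ref{classical}(2) the module $C = D\Lambda\oplus\tau G$ is an injective cogenerator, so $\EE$ has enough injectives with $\II(\EE)=\add C$. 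Moreover $\mod\Lambda$ is idempotent complete and, being abelian, has $1$-kernels in the sense of Definition \ref{nkernel}(1) (that notion concerns only the underlying additive category). Hence $\EE$ is a Hom-finite idempotent complete exact $R$-category with a projective generator, enough injectives, and $1$-kernels.

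Now apply Corollary \ref{artincase} with $n=2$. Unwinding the argument behind it (Theorem \ref{pe} and Proposition \ref{kercotilt}), the Morita embedding of $\EE$ is identified with $\Hom_\Lambda(G,-)\colon\EE\to\mod\Gamma$ for $\Gamma=\End_\EE(G)=\End_\Lambda(G)$, the subcategory $\WW:=\Hom_\Lambda(G,\II(\EE))=\add\Hom_\Lambda(G,C)=\add U$ is a $2$-cotilting subcategory of $\mod\Gamma$, and $\Hom_\Lambda(G,\EE)=\XXX_\WW={}^\perp\WW={}^\perp U$ (an honest equality, not just up to summands, because $\EE$ is idempotent complete). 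By Proposition \ref{wakamatsucotilting} this says that $U$ is a cotilting $\Gamma$-module with $\id U\le 2$, giving the bulk of (1); it also gives (2), since the inducing functor is $\Hom_\Lambda(G,-)$ and the resulting equivalence is $\mod\Lambda=\EE\xrightarrow{\sim}{}^\perp U$; and it gives (3), since the exact structure $\EE$ has $\add G$ as its projectives and $\Hom_\Lambda(G,-)$ is an exact equivalence onto ${}^\perp U$.

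It remains to sharpen $\id U$. If $G$ is projective, i.e.\ $\add G=\proj\Lambda$, then $\EE$ is the usual abelian exact structure on $\mod\Lambda$, $\tau G=0$, and $U=\Hom_\Lambda(G,D\Lambda)$ is the injective cogenerator of $\mod\Gamma$ (as $\Gamma$ is Morita equivalent to $\Lambda$), so $\id U=0$. If $G$ is not projective, I claim $\EE$ does not have $0$-kernels: if every morphism of $\mod\Lambda$ factored as a deflation in $\EE$ followed by a monomorphism, then every epimorphism of $\mod\Lambda$ would be a deflation in $\EE$, i.e.\ every short exact sequence of $\Lambda$-modules would be $(\add G,-)$-exact; but applying this to the (non-split) projective cover of a non-projective direct summand of $G$ gives a contradiction. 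Since ${}^\perp U\simeq\EE$ as exact categories, ${}^\perp U$ has no $0$-kernels either, so by Proposition \ref{cotiltker} (applied to the cotilting subcategory $\add U\subseteq\mod\Gamma$, which would be $1$-cotilting if $\id U\le 1$) we must have $\id U\ge 2$, hence $\id U=2$. In particular $\id U=0$ forces $G$ projective, completing (1).

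Finally, for (4): since $U$ is a cotilting $\Gamma$-module, $DU$ is a tilting $\Gamma^{\op}$-module with $\End_{\Gamma^{\op}}(DU)\cong\End_\Gamma(U)^{\op}$, so by Happel's theorem on tilting modules $\Gamma^{\op}$ and $\End_\Gamma(U)^{\op}$ are derived equivalent, hence so are $\Gamma$ and $\End_\Gamma(U)$. On the other hand, because $\Hom_\Lambda(G,-)\colon\EE\to{}^\perp U$ is an equivalence and $C$ is an object of $\EE=\mod\Lambda$, we get $\End_\Gamma(U)=\End_{\mod\Gamma}(\Hom_\Lambda(G,C))\cong\End_\EE(C)=\End_\Lambda(C)$. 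Therefore $\End_\Lambda(G)=\Gamma$ and $\End_\Lambda(C)$ are derived equivalent. The main obstacle is the bookkeeping needed to identify, inside the proof of Corollary \ref{artincase}, the abstract data $(\Gamma,U)$ with $(\End_\Lambda(G),\Hom_\Lambda(G,C))$, and to extract the exact value $\id U\in\{0,2\}$ from the $0$-kernel obstruction; the rest is formal.
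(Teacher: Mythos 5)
Your proposal is correct and follows essentially the same route as the paper: endow $\mod\Lambda$ with the relative exact structure $(\mod\Lambda)_{(\add G,-)}$, use Proposition \ref{classical} to get the projective generator $G$ and injective cogenerator $C$, observe that $1$-kernels come for free from $\mod\Lambda$ being abelian, and invoke Corollary \ref{artincase} with $n=2$. Your sharpening of $\id U\in\{0,2\}$ is just the contrapositive of the paper's Lemma \ref{below} (which you in effect reprove inline via the ``$0$-kernels force the standard exact structure'' observation), and your explicit Happel-type argument for (4), together with the identification $\End_\Gamma(U)\cong\End_\Lambda(C)$ via full faithfulness of $\Hom_\Lambda(G,-)$, usefully fills in a step the paper leaves implicit.
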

We need the following preparation.
\begin{lemma}\label{below}
Let $\AA$ be an abelian category. Suppose that $\AA$ is endowed with some exact structure, which we denote by $\AA_\EE$. If $\AA_\EE$ has $0$-kernels, then it coincides with the standard exact structure on $\AA$.
\end{lemma}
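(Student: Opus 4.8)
The plan is to show that every short exact sequence of $\AA$ is a conflation of $\AA_\EE$; since the conflations of any exact structure on $\AA$ are kernel--cokernel pairs, hence short exact sequences of $\AA$, this yields the asserted equality of exact structures.

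The first step is to prove that every epimorphism $f\colon X\to Y$ in $\AA$ is a deflation in $\AA_\EE$. Using that $\AA_\EE$ has $0$-kernels (Definition \ref{nkernel}), I would factor $f=ig$ with $g\colon X\to Z$ a deflation in $\AA_\EE$ and $i\colon Z\to Y$ a monomorphism in $\AA$. Since $f$ is epic and $f=ig$, the morphism $i$ is epic; being also monic in the abelian category $\AA$, it is an isomorphism. Now $g$ fits into a conflation $K\rightarrowtail X\xrightarrow{g} Z$ in $\AA_\EE$, and replacing the right-hand term by $Y$ along the isomorphism $i$ produces a short sequence isomorphic to this conflation. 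As the class of conflations of $\AA_\EE$ is closed under isomorphism of short sequences, $K\rightarrowtail X\xrightarrow{f} Y$ is a conflation, so $f$ is a deflation. (Alternatively, one invokes that isomorphisms are deflations and that deflations are closed under composition, so $f=ig$ is a deflation directly.)

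The second step upgrades this to short exact sequences. Given a short exact sequence $0\to L\xrightarrow{a} M\xrightarrow{b} N\to 0$ in $\AA$, the map $b$ is an epimorphism, hence a deflation in $\AA_\EE$ by the first step; thus there is a conflation $K\rightarrowtail M\xrightarrow{b} N$ in $\AA_\EE$. But $K\rightarrowtail M$ is a kernel of $b$ in $\AA$, and so is $a\colon L\to M$, so the two agree up to a unique isomorphism over $M$; hence $L\xrightarrow{a} M\xrightarrow{b} N$ is a conflation in $\AA_\EE$. Therefore every short exact sequence of $\AA$ is a conflation of $\AA_\EE$, and conversely every conflation of $\AA_\EE$ is a short exact sequence of $\AA$, so $\AA_\EE$ is the standard exact structure.

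I do not anticipate a genuine obstacle; the argument is short. The only points needing care are the passage from ``$f=ig$ with $i$ monic'' to ``$i$ is an isomorphism'', which is exactly where abelianness of $\AA$ enters, and the use of two standard facts about exact categories --- stability of conflations under isomorphism of short sequences, and that a deflation admits a kernel forming a conflation (see \cite{buhler}).
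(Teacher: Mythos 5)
Your proposal is correct and follows essentially the same route as the paper: factor an epimorphism as a deflation followed by a monomorphism via the $0$-kernels hypothesis, observe the monomorphism is also epic and hence an isomorphism by abelianness, and conclude every epimorphism is a deflation. The paper compresses your second step into the single phrase ``it suffices to show that any epimorphism is a deflation,'' but the substance is identical.
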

\begin{proof}
It suffices to show that any epimorphism in $\AA_\EE$ is a deflation. 
Let $f:Y \to Z$ be an epimorphism. Since $\AA_\EE$ has $0$-kernels, there is a factorization $f = i g$ such that $g$ is a deflation and $i$ is a monomorphism. Since $f$ is an epimorphism, so is $i$. Thus $i$ is an isomorphism since $\AA$ is abelian. Therefore $f$ is a deflation.
\end{proof}
\begin{proof}[Proof of Theorem \ref{main2}]
Let $G$ be a generator and put $\EE := \mod\Lambda$. Then by Theorem \ref{classical}, $\EE_{(G,-)}$ has a projective generator $G$ and an injective cogenerator $C$. 

To use Corollary \ref{artincase}, we observe that $\EE_{(G,-)}$ has $1$-kernels, which is immediate since $\mod\Lambda$ has kernels.
Hence (2)-(4) hold by Corollary \ref{artincase}. The remaining statement of (1) easily follows from Lemma \ref{below}.
\end{proof}

As an application of Theorem \ref{main2}, we obtain the following result, which says that every module category of an artin $R$-algebra sits inside the module category of an artin $R$-algebra with finite global dimension, with a little modification of its exact structure.
\begin{corollary}
Let $\Lambda$ be an artin $R$-algebra. Then there exist an artin $R$-algebra $\Gamma$ with finite global dimension and a cotilting $\Gamma$-module $U$ with $\id U = 2$ or $0$ such that $\mod\Lambda \equi {}^\perp U$.
\end{corollary}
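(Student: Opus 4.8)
The plan is to deduce this immediately from Theorem \ref{main2}. That theorem shows that for \emph{any} generator $G$ of $\mod\Lambda$, setting $\Gamma := \End_\Lambda(G)$, $C := D\Lambda \oplus \tau G$ and $U := \Hom_\Lambda(G,C) \in \mod\Gamma$, the module $U$ is a cotilting $\Gamma$-module with $\id U = 2$ or $0$ and $\Hom_\Lambda(G,-)$ induces an equivalence $\mod\Lambda \equi {}^\perp U$. Hence the only thing missing for the corollary is a single generator $G$ of $\mod\Lambda$ whose endomorphism algebra $\End_\Lambda(G)$ has finite global dimension; once such a $G$ is fixed, the pair $\bigl(\End_\Lambda(G),\ \Hom_\Lambda(G, D\Lambda\oplus\tau G)\bigr)$ is the required algebra together with its cotilting module.

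To produce such a generator I would invoke a classical construction. Since $\Lambda$ is artin, $\rad^m\Lambda = 0$ for some $m \geq 1$, and one may take $G := \bigoplus_{i=1}^{m} \Lambda/\rad^i\Lambda$; this is a generator because $\Lambda/\rad^m\Lambda \cong \Lambda$ occurs among its summands, and it is a theorem of Dlab--Ringel that $\End_\Lambda(G)$ is quasi-hereditary, in particular of finite global dimension. Alternatively one may appeal to Iyama's theorem that the representation dimension of any artin algebra is finite, which directly supplies a generator--cogenerator $M$ with $\End_\Lambda(M)$ of finite global dimension; any such $M$ is in particular a generator. (Note that when $\Lambda$ itself already has finite global dimension one may simply take $G = \Lambda$, in which case $\id U = 0$; otherwise the generator produced above is non-projective and Theorem \ref{main2}(1) forces $\id U = 2$.)

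With $G$ chosen in this way, Theorem \ref{main2} applies verbatim: $\Gamma := \End_\Lambda(G)$ has finite global dimension by the choice of $G$, $U := \Hom_\Lambda(G, D\Lambda\oplus\tau G)$ is a cotilting $\Gamma$-module with $\id U = 2$ or $0$ by Theorem \ref{main2}(1), and $\mod\Lambda \equi {}^\perp U$ by Theorem \ref{main2}(2). The only genuine obstacle is the input from outside the paper --- the existence of a generator with endomorphism algebra of finite global dimension --- but this is classical and, once granted, all of the exact-category bookkeeping needed is already packaged into Theorem \ref{main2}.
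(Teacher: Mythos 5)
Your proposal is correct and takes essentially the same route as the paper: the paper's proof simply cites Auslander's representation-dimension notes for the existence of a generator whose endomorphism algebra has finite global dimension --- which is exactly the $\bigoplus_{i=1}^{m}\Lambda/\rad^{i}\Lambda$ construction you spell out --- and then applies Theorem \ref{main2}. Your extra remarks (Dlab--Ringel, Iyama's theorem, and when $\id U=0$ versus $2$) are accurate but not needed.
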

\begin{proof}
There exists a generator $G$ of $\mod\Lambda$ whose endomorphism ring has a finite global dimension, see \cite[Section 3]{repdim}. Thus the assertion follows from Theorem \ref{main2}.
\end{proof}

\begin{example}
Let $\Lambda$ be a path algebra of the quiver $1 \leftarrow 2 \leftarrow 3 \leftarrow 4$ over a field $k$. The following diagram is the Auslander-Reiten quiver of $\mod \Lambda$.
\[ \begin{tikzpicture}[scale=.65,yscale=-.8]
 \node (1) at (0,3) [cver] {};
 \node (2) at (1,2) [cver] {};
 \node (3) at (2,1) [cver] {};
 \node (4) at (2,3) [rver] {};
 \node (55) at (3,0) [cver] {};
 \node (5) at (3,0) [rver] {};
 \node (6) at (3,2) [vertex] {};
 \node (7) at (4,1) [rver] {};
 \node (8) at (4,3) [cver] {};
 \node (9) at (5,2) [rver] {};
 \node (10) at (6,3) [rver] {};
 
 \draw [dashed] (1) -- (10);
 \draw [dashed] (2) -- (9);
 \draw [dashed] (3) -- (7);
 
 \draw [->] (1) -- (2);
 \draw [->] (2) -- (3);
 \draw [->] (2) -- (4);
 \draw [->] (3) -- (5);
 \draw [->] (3) -- (6);
 \draw [->] (4) -- (6);
 \draw [->] (5) -- (7);
 \draw [->] (6) -- (7);
 \draw [->] (6) -- (8);
 \draw [->] (7) -- (9);
 \draw [->] (8) -- (9);
 \draw [->] (9) -- (10);

\end{tikzpicture} \]
Let $G$ be a generator of $\mod \Lambda$ corresponding to the circles. Then the associated cogenerator $C$ is the module indicated by the rectangles. Then $\Gamma := \End_\Lambda (G)$ is given by the quiver 
\[\xymatrix{1 & 2 \ar[l] & 3 \ar[l] & 4 \ar[l] \\ & & 5 \ar[u] \ar@{.}[ul] &}\]
where the dotted line indicates the zero relation. The Auslander-Reiten quiver of $\mod \Gamma$ is given by the following diagram.
\[ \begin{tikzpicture}[scale=.65,yscale=-.8]

 \fill [fill1] (4.5,4) -- (6,2.5) -- (7.5,4) -- (7,4.5) -- (6,3.5) -- (5,4.5) -- cycle;
 \fill [fill1] (-.5,3) -- (3,-0.5) -- (4.5,1) -- (2,3.5) -- (1,2.5) -- (0,3.5) -- cycle; 
 
 \node (1) at (0,3) [cver] {};
 \node (21) at (1,2) [cver] {};
 \node (321) at (2,1) [cver] {};
 \node (2) at (2,3) [rver] {};
 \node (43211) at (3,0) [cver] {};
 \node (4321) at (3,0) [rver] {};
 \node (32) at (3,2) [vertex] {};
 \node (432) at (4,1) [rver] {};
 \node (3) at (4,3) [vertex] {};
 \node (43) at (5,2) [vertex] {};
 \node (53) at (5,4) [cver] {};
 \node (453) at (6,3) [rver] {};
 \node (5) at (7,2) [vertex] {};
 \node (4) at (7,4) [rver] {};
 
 \draw [dashed] (1) -- (453);
 \draw [dashed] (21) -- (5);
 \draw [dashed] (321) -- (432);
 \draw [dashed] (53) -- (4);
 
 \draw [->] (1) -- (21);
 \draw [->] (21) -- (2);
 \draw [->] (21) -- (321);
 \draw [->] (2) -- (32);
 \draw [->] (321) -- (4321);
 \draw [->] (321) -- (32);
 \draw [->] (4321) -- (432);
 \draw [->] (32) -- (432);
 \draw [->] (32) -- (3);
 \draw [->] (432) -- (43);
 \draw [->] (3) -- (43);
 \draw [->] (3) -- (53);
 \draw [->] (43) -- (453);
 \draw [->] (53) -- (453);
 \draw [->] (453) -- (5);
 \draw [->] (453) -- (4);

\end{tikzpicture} \]
The shaded area corresponds to the essential image of the embedding $\mod\Lambda \to \mod\Gamma$ in Theorem \ref{main2}. We keep the shapes of vertices in the quiver. In particular, the direct sum of all rectangles gives the 2-cotilting module $U$ and $\mod\Lambda$ is equivalent to $^\perp U$.
\end{example}

\appendix
\section{The Auslander-Buchweitz theory for exact categories} 
In this appendix, we shall study the Auslander-Buchweitz approximation theory, developed in \cite{ab}, in the context of exact categories. This is a useful tool to investigate cotilting subcategories. Our results in this appendix are used in Section 4.

First let us recall the following important notions.
\begin{definition}\label{functfin}
Let $\CC$ be an additive category and $\DD$ an additive subcategory of $\CC$.
\begin{enumerate}
 \item A morphism $f:D_X \to X$ in $\CC$ is said to be a \emph{right $\DD$-approximation} if $D_X$ is in $\DD$ and every morphism $D \to X$ with $D\in\DD$ factors through $f$.
 \item $\DD$ is said to be \emph{contravariantly finite} if every object in $\CC$ has a right $\DD$-approximation.
\end{enumerate}
Dually we define a \emph{left $\DD$-approximation} and a \emph{covariantly finite} subcategories.
\begin{enumerate}[resume]
 \item $\DD$ is said to be \emph{functorially finite} if $\CC$ is both contravariantly finite and covariantly finite.
\end{enumerate}
\end{definition}

The Auslander-Buchweitz theory gives a systematic method to provide right and left approximations by a certain subcategory. The following is an exact category version of \cite[Theorem 1.1]{ab} and we present a proof for the convenience of the reader.
\begin{proposition}\label{ab}
Let $\EE$ be an exact category and $\XX$ an extension-closed subcategory of $\EE$. Suppose that $\XX$ has enough injectives $\WW$. Then the following hold.
\begin{enumerate}
\item For any $C \in \widehat{\XX}^n$, there exist conflations
\begin{equation}\label{a1}
\xymatrix{
Y_C \inflr &  X_C \deflr^{f} & C, }
\end{equation}
\vspace{-.5cm}
\begin{equation}\label{a2}
\xymatrix{
C \inflr^{g} & Y^C \deflr & X^C,}
\end{equation}
with $X_C,X^C\in\XX$, $Y_C \in \widehat{\WW}^{n-1}$ and $Y^C \in \widehat{\WW}^n$.
\item If $\EE$ has enough projectives and $\XX$ is a preresolving subcategory of $\EE$, then $f$ is a right $\XX$-approximation for $C$ and $g$ is a left $\widehat{\WW}$-approximation for $C$.
\end{enumerate}
\end{proposition}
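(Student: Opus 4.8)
The plan is to follow the strategy of \cite{ab}: prove part (1) by induction on $n$, treating the two conflations (\ref{a1}) and (\ref{a2}) simultaneously, and then deduce part (2) from the vanishing $\Ext^{>0}_\EE(\XX,\widehat{\WW})=0$. Note that part (1) will use only that $\XX$ is extension-closed and has enough injectives $\WW$, since pushouts along inflations always exist in an exact category; the projectivity hypothesis is needed only for part (2).

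For part (1), in the base case $n=0$ we have $C\in\XX$: then (\ref{a1}) holds trivially with $Y_C=0$, $X_C=C$, $f=\mathrm{id}_C$, and (\ref{a2}) comes directly from the enough-injectives hypothesis as a conflation $C\infl W\defl X^C$ with $W\in\WW=\widehat{\WW}^0$ and $X^C\in\XX$. For the inductive step, assume (\ref{a1}) and (\ref{a2}) hold for all objects of $\widehat{\XX}^{n-1}$, and let $C\in\widehat{\XX}^n$. Truncating a complex witnessing $C\in\widehat{\XX}^n$ gives a conflation $C'\infl X_0\defl C$ with $X_0\in\XX$ and $C'\in\widehat{\XX}^{n-1}$. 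Applying the inductive hypothesis (\ref{a2}) to $C'$ yields a conflation $C'\infl Y^{C'}\defl X^{C'}$ with $X^{C'}\in\XX$ and $Y^{C'}\in\widehat{\WW}^{n-1}$; forming the pushout of the inflation $C'\infl X_0$ along $C'\to Y^{C'}$ produces an object $E$ fitting into conflations $Y^{C'}\infl E\defl C$ and $X_0\infl E\defl X^{C'}$. Since $\XX$ is extension-closed, $E\in\XX$, so setting $X_C:=E$ and $Y_C:=Y^{C'}$ establishes (\ref{a1}) for $C$. To get (\ref{a2}) for $C$, take the conflation $Y_C\infl X_C\xrightarrow{f}C$ just built, choose a conflation $X_C\infl W\defl X'$ with $W\in\WW$ and $X'\in\XX$ (enough injectives), and form the pushout of the deflation $f:X_C\to C$ along the inflation $X_C\infl W$; the resulting object $Y^C$ fits into conflations $C\infl Y^C\defl X'$ and $Y_C\infl W\defl Y^C$, and since $Y_C\in\widehat{\WW}^{n-1}$ and $W\in\WW$, the second conflation gives $Y^C\in\widehat{\WW}^n$. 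Throughout, each pushout square is split into its two conflations using the standard exact-category lemmas (pushout of an inflation along a conflation; pushout of a deflation is a deflation with the same kernel).

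For part (2), the key point is $\Ext^{>0}_\EE(\XX,\WW)=0$. Indeed, for $X',W\in\XX$ with $W$ injective in $\XX$, any $\EE$-extension of $X'$ by $W$ has middle term in $\XX$ (as $\XX$ is extension-closed) and hence splits, so $\Ext^1_\EE(X',W)=0$; the higher vanishing follows by dimension shifting along the conflations $\Omega X'\infl P\defl X'$ with $P$ projective in $\EE$ furnished by condition (c) of Definition \ref{defpreresol}, using the long exact sequence in $\Ext_\EE$. Dimension shifting once more, now along $\WW$-resolutions of objects of $\widehat{\WW}$, upgrades this to $\Ext^{>0}_\EE(\XX,\widehat{\WW})=0$. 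With this in hand, applying $\Hom_\EE(X',-)$ to (\ref{a1}) for an arbitrary $X'\in\XX$ gives the exact sequence $\Hom_\EE(X',X_C)\to\Hom_\EE(X',C)\to\Ext^1_\EE(X',Y_C)$ whose last term vanishes because $Y_C\in\widehat{\WW}^{n-1}\subseteq\widehat{\WW}$, so $f$ is a right $\XX$-approximation; dually, applying $\Hom_\EE(-,Z)$ to (\ref{a2}) for an arbitrary $Z\in\widehat{\WW}$ gives $\Hom_\EE(Y^C,Z)\to\Hom_\EE(C,Z)\to\Ext^1_\EE(X^C,Z)$ with $\Ext^1_\EE(X^C,Z)=0$ since $X^C\in\XX$, so $g$ is a left $\widehat{\WW}$-approximation.

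I expect the main obstacle to be organizational rather than conceptual: interleaving the two halves of the induction without circularity (so that (\ref{a1}) at level $n$ invokes (\ref{a2}) only at level $n-1$, while (\ref{a2}) at level $n$ invokes only (\ref{a1}) at level $n$), and carefully applying the pushout lemmas so that every claimed conflation really is a conflation in $\EE$. The $\Ext$-vanishing in part (2) relies essentially on $\XX$ being preresolving — without enough $\EE$-projectives inside $\XX$ the dimension shift past degree $1$ fails — so that hypothesis genuinely enters there even though part (1) does not need it.
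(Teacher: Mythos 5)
Your proposal is correct and follows essentially the same route as the paper: the same induction on $n$, building (\ref{a1}) at level $n$ by pushing out along the inductively obtained conflation (\ref{a2}) at level $n-1$ and using extension-closure of $\XX$, then deriving (\ref{a2}) at level $n$ from a $\WW$-injective conflation for $X_C$, and deducing (2) from $\Ext^{>0}_\EE(\XX,\widehat{\WW})=0$ via dimension shifting. Your part (2) is in fact spelled out in more detail than the paper's, which merely asserts the $\Ext$-vanishing; the details you supply are the correct ones.
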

\begin{proof} 
(1):
The proof is by induction on $n$. Suppose that $C \in \widehat{\XX}^0 = \XX$. Then $0\infl C \defl C$ gives (\ref{a1}). Since $\XX$ has enough injective objects, we have a conflation $C \infl W \defl C'$ with $W$ in $\WW$ and $C'$ in $\XX$, which is the desired conflation (\ref{a2}).

Now let $n\geq 0$ be an integer and $C$ in $\widehat{\XX}^{n+1}$. By the definition of $\widehat{\XX}^{n+1}$, there exists a conflation $D \infl X \defl C$ such that $D$ is in $\widehat{\XX}^n$ and $X$ is in $\XX$. By the induction hypothesis, we have conflations $Y_D \infl X_D \defl D$ and $D \infl Y^D \defl X^D$ with $X_D,X^D\in\XX$, $Y_D\in\widehat{\WW}^{n-1}$ and $Y^D\in\widehat{\WW}^n$. Then we have the following pushout diagram.
\[
\xymatrix{
D \infld \inflr & X \deflr \infld & C \ar@{=}[d] \\
Y^D \inflr \defld & E \deflr \defld & C \\
X^D \ar@{=}[r] & X^D &
}
\]
Since $\XX$ is closed under extensions, $E$ is in $\XX$ by the middle column, hence the middle row gives (\ref{a1}). Because $\XX$ has enough injective objects, we obtain a conflation $E \infl W \defl F$ in $\XX$ with $W\in\WW$ and $F \in \XX$, which induces the following diagram.
\[
\xymatrix{
Y^D \inflr \ar@{=}[d] & E \deflr \infld & C \infld \\
Y^D \inflr & W \deflr \defld & G \defld \\
 & F \ar@{=}[r] & F
}
\]
Thus $G$ is in $\widehat{\WW}^n$ by the middle row, and the right column gives (\ref{a2}).

(2):
Since $\EE$ has enough projectives and $\XX$ is a preresolving subcategory of $\EE$, it follows that $\Ext^{>0}_\EE(\XX,\WW)$ vanishes. Then it is easy to check $\Ext^{>0}_\EE(\XX,\widehat{\WW})=0$, and by using the long exact sequences of $\Ext$ one can easily show that $f$ and $g$ are approximations.
\end{proof}

\begin{corollary}\label{icchi0}
Let $\EE$ be an exact category with enough projectives and $\XX$ a preresolving subcategory of $\EE$ with enough injectives $\WW$. Suppose that $\widehat{\XX} = \EE$ holds. Then $\XX$ is contravariantly finite and $\XX^\perp = \widehat{\WW}$ is covariantly finite. Moreover we have $\XX \cap \XX^\perp = \WW$ and $\add\XX = {}^\perp\WW = {}^\perp\widehat{\WW}$. If $\widehat{\XX}^n = \EE$, then $\XX^\perp = \widehat{\WW}^n$.
\end{corollary}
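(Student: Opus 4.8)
The plan is to read everything off the two Auslander--Buchweitz conflations \eqref{a1} and \eqref{a2} of Proposition \ref{ab}, combined with the $\Ext$-vanishing $\Ext^{>0}_\EE(\add\XX,\widehat\WW)=0$ and a handful of splitting arguments. Since $\widehat\XX=\EE$, every object $C\in\EE$ lies in some $\widehat\XX^n$, so Proposition \ref{ab}(2) (whose hypotheses---$\EE$ has enough projectives and $\XX$ is preresolving---are our standing assumptions) immediately furnishes a right $\XX$-approximation $X_C\defl C$ and a left $\widehat\WW$-approximation $C\infl Y^C$. This already gives that $\XX$ is contravariantly finite and that $\widehat\WW$ is covariantly finite; it remains to identify $\widehat\WW$ with $\XX^\perp$, to pin down $\XX\cap\XX^\perp$ and $\add\XX$, and to sharpen to $\widehat\WW^n$ in the bounded case.

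First I would establish the basic $\Ext$-vanishing. Since $\WW=\II(\XX)$ consists of injective objects of $\XX$ and $\XX$ is extension-closed, any conflation $W\infl E\defl X$ with $X\in\XX$, $W\in\WW$ has $E\in\XX$ and hence splits, so $\Ext^1_\EE(\XX,\WW)=0$; as $\XX$ is preresolving, taking syzygies in $\XX$ gives $\Ext^{>0}_\EE(\XX,\WW)=0$, and a dimension shift along the conflations defining $\widehat\WW^m$ upgrades this to $\Ext^{>0}_\EE(\XX,\widehat\WW)=0$, hence $\Ext^{>0}_\EE(\add\XX,\widehat\WW)=0$ by additivity. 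In particular $\widehat\WW\subseteq\XX^\perp$ and $\add\XX\subseteq{}^\perp\widehat\WW\subseteq{}^\perp\WW$. The same computation with $\WW$ in place of $\XX$ (using that $\WW\subseteq\XX$ is again extension-closed and closed under summands) gives $\Ext^{>0}_\EE(\WW,\widehat\WW)=0$, from which a horseshoe-lemma argument shows that each $\widehat\WW^n$ is extension-closed.

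For the reverse inclusion $\XX^\perp\subseteq\widehat\WW$, take $Y\in\XX^\perp$ with $Y\in\widehat\XX^n$; then \eqref{a2} reads $Y\infl Y^Y\defl X^Y$ with $X^Y\in\XX$ and $Y^Y\in\widehat\WW^n$, and $\Ext^1_\EE(X^Y,Y)=0$ forces this conflation to split, so $Y$ is a direct summand of $Y^Y\in\widehat\WW^n$; invoking that $\widehat\WW^n$ is closed under direct summands (see below) we get $Y\in\widehat\WW^n$, which yields both $\XX^\perp=\widehat\WW$ and, when $\widehat\XX^n=\EE$, the sharpened equality $\XX^\perp=\widehat\WW^n$. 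Similarly, for $M\in{}^\perp\WW$, \eqref{a1} gives $Y_M\infl X_M\defl M$ with $X_M\in\XX$, $Y_M\in\widehat\WW$, and $\Ext^1_\EE(M,Y_M)=0$ (by the dimension shift, since $\Ext^{>0}_\EE(M,\WW)=0$) makes it split, so $M$ is a summand of $X_M$, i.e. $M\in\add\XX$; together with $\add\XX\subseteq{}^\perp\widehat\WW\subseteq{}^\perp\WW$ this gives $\add\XX={}^\perp\WW={}^\perp\widehat\WW$. Finally $\XX\cap\XX^\perp=\WW$: the inclusion $\supseteq$ is clear (as $\WW\subseteq\XX$ and $\WW\subseteq\widehat\WW=\XX^\perp$), and if $Z\in\XX\cap\XX^\perp$ then enough injectives of $\XX$ yields $Z\infl W\defl Z'$ with $W\in\WW$, $Z'\in\XX$, which splits since $\Ext^1_\EE(Z',Z)=0$, so $Z$ is a summand of the injective object $W$ and hence $Z\in\WW$.

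The one genuinely technical point---and the step I expect to be the main obstacle---is the closure of $\widehat\WW^n$ under direct summands. I would prove it exactly as the corresponding statement for $\mod\CC$ in Proposition \ref{mod}, by induction on $n$ and a pullback/direct-sum diagram chase, using that $\WW$ is closed under summands and extensions, that each $\widehat\WW^n$ is extension-closed, and the elementary observation that a summand of an object of $\widehat\WW^n$ with $n\geq 1$ is itself a quotient of an object of $\WW$ (via the composite $W_0\defl Y^Y\defl Y$), so that the inductive syzygy step goes through. Everything else is a routine sequence of long exact $\Ext$-sequences, dimension shifts, and splittings of conflations.
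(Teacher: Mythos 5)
Most of your argument tracks the paper's: the approximations come from Proposition \ref{ab}(2), the identity $\XX\cap\XX^\perp=\WW$ is proved by the same splitting of $Z\infl W\defl Z'$, and $\add\XX={}^\perp\WW={}^\perp\widehat\WW$ is obtained exactly as in the paper by splitting the conflation \eqref{a1}. The problem is the inclusion $\XX^\perp\subseteq\widehat\WW$ (and its refinement $\XX^\perp\subseteq\widehat\WW^n$), which you route through the conflation \eqref{a2} and a closure-under-summands lemma for $\widehat\WW^n$. You correctly flag this as the main obstacle, but the sketch you give does not close. The Proposition-\ref{mod}-style diagram chase applied to a summand $Z$ of $Y\in\widehat\WW^n$ produces a conflation $\Omega Z\infl W_0\defl Z$ in which $\Omega Z$ is an extension of the \emph{complementary} summand $X$ by the syzygy $Y_1\in\widehat\WW^{n-1}$; since $X$ is again only known to be a summand of an object of $\widehat\WW^n$, the induction parameter never decreases, and even dropping the bound $n$ the iteration yields an \emph{infinite} $\WW$-resolution of $Z$, which is useless for membership in $\widehat\WW$. (In Proposition \ref{mod} the same trick works precisely because $\mod\CC$ is defined by resolutions of unbounded length.) Closure of $\widehat\WW^n$ under summands is true, but in this development it is a \emph{consequence} of $\XX^\perp=\widehat\WW^n$, not an available input.

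The paper avoids the issue entirely by using \eqref{a1} instead of \eqref{a2}: for $C\in\XX^\perp$ the conflation $Y_C\infl X_C\defl C$ has $Y_C\in\widehat\WW^{n-1}\subseteq\XX^\perp$, and since $\XX^\perp$ is extension-closed this forces $X_C\in\XX\cap\XX^\perp=\WW$, so the conflation itself exhibits $C\in\widehat\WW^n$ with no summand argument needed. If you want to stay with \eqref{a2}, your argument can be repaired without the general lemma: since $Y^Y=Y\oplus X^Y$ lies in $\widehat\WW^n\subseteq\XX^\perp$, the complement satisfies $X^Y\in\XX\cap\XX^\perp=\WW$; composing $W_0\defl Y^Y$ with the split projection onto $Y$ gives a conflation $K\infl W_0\defl Y$ whose kernel sits in a conflation $Y_1\infl K\defl X^Y$ with $Y_1\in\widehat\WW^{n-1}$, which splits because $\Ext^1_\EE(\WW,\widehat\WW)=0$; hence $K\iso Y_1\oplus X^Y\in\widehat\WW^{n-1}$ and $Y\in\widehat\WW^n$. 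Either repair is fine, but as written the proposal has a genuine gap at this step.
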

\begin{proof}
We first show $\XX \cap \XX^\perp = \WW$. Since $\XX$ is a preresolving subcategory of $\EE$, it follows that $\WW \subset \XX^\perp$ holds.
Let $X \in \XX \cap \XX^\perp$. Since $\XX$ has enough injectives $\WW$, there exists a conflation $X \infl W \defl X'$ with $W$ in $\WW$ and $X'$ in $\XX$. Then this sequence splits because $\Ext_\EE^1(X',X)=0$. Thus $X$ is contained in $\add W$, which implies that $X$ is injective in $\XX$. Therefore $X$ is in $\WW$.

Next we show $\XX^\perp = \widehat{\WW}$. Note that $\widehat{\WW} \subset \XX^\perp$ holds, so it suffices to prove $\XX^\perp \subset \widehat{\WW}$.
Let $C$ be in $\XX^\perp$. By Proposition \ref{ab}, we have a conflation $Y_C \infl X_C \defl C$ with $X_C$ in $\XX$ and $Y_C$ in $\widehat{\WW}$. Since $\XX^\perp$ is clearly closed under extensions, we have $X_C\in \XX \cap \XX^\perp =\WW$. It follows from the definition of $\widehat{\WW}$ that $C$ is in $\widehat{\WW}$.
Note that in case $\widehat{\XX}^n = \EE$, we may assume that $Y_C$ is in $\widehat{\WW}^{n-1}$, thus $C$ is actually in $\widehat{\WW}^n$.

Finally we shall show $\add\XX = {}^\perp\WW = {}^\perp\widehat{\WW}$. Clearly $\add \XX \subset {}^\perp\WW = {}^\perp\widehat{\WW}$ holds. 
Let $C$ be in $^\perp\widehat{\WW}$. Then by Proposition \ref{ab}, we have a conflation $Y_C \infl X_C \defl C$ with $Y_C$ in $\widehat{\WW}$ and $X_C$ in $\XX$. Then $C \in {}^\perp \widehat{\WW}$ implies that this sequence splits, which shows that $C$ is a summand of $X_C$. Consequently, $C$ is in $\add\XX$, which completes the proof.
\end{proof}
Immediately we obtain the following criterion for two preresolving subcategories to be the same up to summands.
\begin{corollary}\label{icchi}
Let $\EE$ be an exact category with enough projectives, and let $\XX$ and $\XX'$ be preresolving subcategories of $\EE$ with enough injectives $\WW$ and $\WW'$ respectively. If $\widehat{\XX} = \widehat{\XX'} = \EE$ and $\add\WW = \add \WW'$ hold, then $\add\XX = \add \XX'$.
\end{corollary}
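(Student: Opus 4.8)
The plan is to reduce everything to Corollary \ref{icchi0}. First I would apply Corollary \ref{icchi0} to the preresolving subcategory $\XX$: since $\XX$ has enough injectives $\WW$ and $\widehat{\XX} = \EE$ by hypothesis, the corollary yields $\add\XX = {}^\perp\WW$. Applying it verbatim to $\XX'$ gives likewise $\add\XX' = {}^\perp\WW'$. Thus the statement $\add\XX = \add\XX'$ is equivalent to the identity ${}^\perp\WW = {}^\perp\WW'$ of Ext-orthogonal subcategories.

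Next I would observe that the subcategory ${}^\perp\YY$ of $\EE$ depends only on the additive closure $\add\YY$. Indeed, for $X \in \EE$ the condition $\Ext^{>0}_\EE(X,\YY) = 0$ is equivalent to $\Ext^{>0}_\EE(X,\add\YY) = 0$, because each $\Ext^i_\EE(X,-)$ is an additive functor and hence sends finite direct sums to direct sums and summands to summands; so vanishing on $\YY$ propagates to all of $\add\YY$, and conversely. Therefore the hypothesis $\add\WW = \add\WW'$ immediately gives ${}^\perp\WW = {}^\perp\widehat{\WW}$ (again appealing to Corollary \ref{icchi0} if one wants the $\widehat{\phantom{W}}$-version) and, more directly, ${}^\perp\WW = {}^\perp(\add\WW) = {}^\perp(\add\WW') = {}^\perp\WW'$. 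Combining with the previous paragraph, $\add\XX = {}^\perp\WW = {}^\perp\WW' = \add\XX'$, as desired.

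There is essentially no serious obstacle here: the entire content has been front-loaded into Corollary \ref{icchi0}, and the only remaining point is the elementary additivity remark about ${}^\perp(-)$. The one thing I would be careful to state explicitly is that the hypotheses of Corollary \ref{icchi0} are met for both $\XX$ and $\XX'$ — namely that each is preresolving in $\EE$, has enough injectives (with injective subcategories $\WW$, $\WW'$ respectively), and satisfies $\widehat{\phantom{X}} = \EE$ — all of which are exactly the assumptions of the present corollary.
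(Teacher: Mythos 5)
Your proposal is correct and follows essentially the same route as the paper: the paper also deduces the statement immediately from Corollary \ref{icchi0} via $\add\XX = {}^\perp\WW$, with the equality ${}^\perp\WW = {}^\perp\WW'$ being the (elementary) consequence of $\add\WW = \add\WW'$ that you spell out. Your added remark on the additivity of $\Ext^{>0}_\EE(X,-)$ is a correct justification of the step the paper leaves implicit.
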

\begin{proof}
The assertion is clear since in this situation $\add\XX = {}^\perp\WW$ holds by Corollary \ref{icchi0}.  
\end{proof}
For an application to cotilting subcategories we studied in Section 4, we have the following result.
\begin{proposition}\label{abcotilt}
Let $\WW$ be an $n$-cotilting subcategory of $\EE$. Then $\XXX_\WW$ is contravariantly finite. Furthermore we have $\XXX_\WW^\perp =\widehat{\WW} = \widehat{\WW}^n$ and $\XXX_\WW = {}^\perp\WW = {}^\perp\widehat{\WW}$.
\end{proposition}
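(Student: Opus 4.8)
The plan is to obtain all of the assertions as a single instance of the Auslander--Buchweitz criterion, Corollary \ref{icchi0}, applied to the subcategory $\XX := \XXX_\WW$ of $\EE$. So the first task is to check that the hypotheses of that corollary hold for $\XX = \XXX_\WW$. Three facts supply this. First, since $\WW$ is $n$-cotilting, condition (3) of the definition gives $\XXX_\WW = {}^\perp\WW$ outright. Second, Proposition \ref{cotiltingcond} gives $\widehat{\XXX_\WW}^{\,n} = \EE$. Third, since an $n$-cotilting subcategory is Wakamatsu tilting, Proposition \ref{xxww} tells us that $\XXX_\WW$ is a resolving (hence, by Lemma \ref{resol}, preresolving) subcategory of $\EE$ which has enough injectives, the injective objects of $\XXX_\WW$ being precisely those of $\add\WW$ by part (5) (note $\WW\subseteq\XXX_\WW$ via the conflations $W\infl W\defl 0$, so $\add\WW\subseteq\XXX_\WW$).

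Granting these, Corollary \ref{icchi0} applied to $\XX = \XXX_\WW$, whose enough-injectives subcategory is $\add\WW$, immediately yields: $\XXX_\WW$ is contravariantly finite; $\XXX_\WW^{\,\perp} = \widehat{\add\WW}^{\,n} = \widehat{\add\WW}$, the second equality because $\widehat{\XXX_\WW}^{\,n} = \EE$; and $\add\XXX_\WW = {}^\perp(\add\WW) = {}^\perp\widehat{\add\WW}$. Now I close the remaining gaps formally. Since $\XXX_\WW$ is closed under summands by Proposition \ref{xxww}(1), $\add\XXX_\WW = \XXX_\WW$. Since $\Ext^{\,i}_\EE$ is additive in its second argument, taking Ext-orthogonals does not distinguish $\WW$ from $\add\WW$, so ${}^\perp(\add\WW) = {}^\perp\WW$; and from the inclusions $\WW \subseteq \widehat{\WW} \subseteq \widehat{\add\WW}$ the three classes ${}^\perp\widehat{\add\WW} \subseteq {}^\perp\widehat{\WW} \subseteq {}^\perp\WW$ are squeezed equal. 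Combining these gives $\XXX_\WW = {}^\perp\WW = {}^\perp\widehat{\WW}$, which is the last assertion of the proposition.

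The one place where slightly more than formal manipulation is needed — and which I expect to be the main obstacle — is replacing $\widehat{\add\WW}$ by $\widehat{\WW}$ in the identity $\XXX_\WW^{\,\perp} = \widehat{\add\WW} = \widehat{\add\WW}^{\,n}$, i.e. showing $\widehat{\WW}^{\,n} = \widehat{\add\WW}^{\,n}$, equivalently that $\widehat{\WW}$ is closed under direct summands in $\EE$. This is an induction on resolution length of exactly the same flavour as the summand-closure arguments already carried out in Proposition \ref{mod} and Lemma \ref{resol}: split off a summand of an object of $\widehat{\WW}^{\,m}$ and push it down one step of a conflation resolution, reducing to the case $\widehat{\WW}^{\,m-1}$. (Alternatively, since $\WW$ is additive and $n$-cotilting one may simply replace $\WW$ by $\add\WW$ from the start, which is again an $n$-cotilting subcategory with $\XXX_{\add\WW} = \XXX_\WW$, and then the proposition is literally the conclusion of Corollary \ref{icchi0}.) Once this bookkeeping is settled, nothing else is required: contravariant finiteness and the two chains of equalities are all already in hand from the paragraph above.
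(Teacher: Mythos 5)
Your proof is correct and takes essentially the same route as the paper, which likewise derives all the assertions by applying Corollary \ref{icchi0} to $\XX := \XXX_\WW$ in combination with Propositions \ref{xxww} and \ref{cotiltingcond}. The only difference is that you make explicit the $\add\WW$ versus $\WW$ bookkeeping, which the paper silently suppresses.
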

\begin{proof}
The assertions follow from Corollary \ref{icchi0} (apply to the case $\XX:=\XXX_\WW$), Proposition \ref{xxww} and \ref{cotiltingcond}.
\end{proof}

\section{Constructions of exact structures}
In this appendix, we collect two methods to construct new exact structures from a given one. One is to change exact structures on exact categories, and the other is to give a natural exact structures to quotient categories of exact categories.

\emph{In what follows, we denote by $\EE$ an exact category and by $\CC$ an additive subcategory of $\EE$}. Our aim is to construct a new exact structure in which objects in $\CC$ behave as projective or injective objects.

We remark that similar results were given in \cite{drss} in the case of artin $R$-algebras, based on the theory of relative homological algebra developed by Auslander-Solberg \cite{as1,as2,as3}.
\begin{definition}\label{various}
Let $L \infl M \defl N$ be a conflation in $\EE$.
\begin{enumerate}
\item It is called a \emph{$(\CC,-)$-conflation} if $\EE(C,M)\to \EE(C,N)$ is surjective for all $C$ in $\CC$. In this case $L\infl M$ is called a \emph{$(\CC,-)$-inflation} and $M\defl N$ is called a \emph{$(\CC,-)$-deflation}. 
\item It is called a \emph{$(-,\CC)$-conflation} if $\EE(M,C) \to \EE(L,C)$ is surjective for all $C\in\CC$. 
\item It is called a \emph{$\CC$-conflation} if it is both a $(\CC,-)$-conflation and a $(-,\CC)$-conflation. 
\end{enumerate}
In the obvious way we define the terms \emph{$(-,\CC)$-inflation}, \emph{$(-,\CC)$-deflation}, \emph{$\CC$-inflation} and \emph{$\CC$-deflation}.
\end{definition}

\begin{theorem}\label{modify}
Let $\EE$ be an exact category and $\CC$ an additive subcategory of $\EE$. Then the class of all $(\CC,-)$-conflations (resp. $(-,\CC)$-conflations, $\CC$-conflations) defines a new exact structure on $\EE$.
\end{theorem}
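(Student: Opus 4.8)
The plan is to verify directly that the class $\mathcal{S}$ of all $(\CC,-)$-conflations satisfies the axioms of an exact structure on the additive category underlying $\EE$, in the economical form of \cite{buhler,keller} (closure under isomorphisms, all split conflations are conflations, inflations and deflations are each closed under composition, and pushouts of inflations resp.\ pullbacks of deflations along arbitrary morphisms exist and stay inflations resp.\ deflations). The statement for $(-,\CC)$-conflations then follows by applying the $(\CC,-)$-case to $\EE^{\op}$, and the statement for $\CC$-conflations follows because the intersection of two exact substructures of a fixed exact category is again an exact structure: in such an intersection every conflation (hence inflation, deflation) is simultaneously one for each of the two structures, and the relevant pushouts/pullbacks are computed once and for all in the ambient $\EE$, so they again lie in both structures.

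First I would dispatch the routine axioms for $\mathcal{S}$. Closure under isomorphisms is immediate from the defining surjectivity condition. A split conflation lies in $\mathcal{S}$ since $\EE(C,-)$ carries a split epimorphism to a split epimorphism. If $M\defl N$ and $N\defl P$ are $(\CC,-)$-deflations, their composite is a deflation in $\EE$ and $\EE(C,M)\to\EE(C,N)\to\EE(C,P)$ is a composite of surjections, so the composite is again a $(\CC,-)$-deflation. For the pushout of a $(\CC,-)$-inflation $L\infl M$ with cokernel $N$ along a morphism $L\to L'$: the pushout conflation $L'\infl M'\defl N$ exists in $\EE$, and given $g\colon C\to N$ with $C\in\CC$ one lifts $g$ through the $(\CC,-)$-deflation $M\defl N$ and composes with $M\to M'$ to obtain a lift of $g$ through $M'\defl N$; hence the pushout is a $(\CC,-)$-conflation. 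Dually, the pullback of a $(\CC,-)$-deflation along an arbitrary morphism is again a $(\CC,-)$-deflation, the required lift being produced by the universal property of the pullback in $\EE$.

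The one step that demands actual work is that $(\CC,-)$-inflations are closed under composition. Given $(\CC,-)$-conflations $L\infl M\defl M/L$ and $M\infl N\defl N/M$, the composite $L\infl N$ is an inflation in $\EE$, and by the $3\times 3$ lemma for exact categories \cite{buhler} its cokernel is $N/L$ and there is a conflation $M/L\infl N/L\defl N/M$ compatible with the cokernel maps. Fixing $C\in\CC$ and $g\colon C\to N/L$, I would chase the diagram obtained by applying the left-exact functor $\EE(C,-)$: lift the image of $g$ in $\EE(C,N/M)$ through the surjection $\EE(C,N)\to\EE(C,N/M)$; subtract the image in $\EE(C,N/L)$ of this lift from $g$ to land in the kernel of $\EE(C,N/L)\to\EE(C,N/M)$, i.e.\ in the image of $\EE(C,M/L)$; lift that element through the surjection $\EE(C,M)\to\EE(C,M/L)$ and push it into $\EE(C,N)$; the sum of the two lifts is a preimage of $g$, so $\EE(C,N)\to\EE(C,N/L)$ is surjective and $L\infl N$ is a $(\CC,-)$-inflation. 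I expect this bookkeeping — tracking which maps in the $3\times 3$ diagram remain surjective after applying $\EE(C,-)$ — to be the only genuine obstacle, and it is purely formal.

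Finally, I would note the functorial repackaging: the $(\CC,-)$-conflations form an additive subfunctor of $\Ext^1_\EE(-,-)$ closed under pullbacks and pushouts, and such subfunctors are known to determine exact structures, as in the relative homological algebra of Auslander--Solberg \cite{as1,as2,as3} and its exact-category reformulation \cite{drss}; the verification above is merely an unwinding of that principle, now over an arbitrary exact base rather than $\mod\Lambda$.
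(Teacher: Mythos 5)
Your proof is correct and follows essentially the same route as the paper: a direct verification of the exact-category axioms for the class of $(\CC,-)$-conflations, with the $(-,\CC)$-case handled by duality and the $\CC$-case as an intersection of two substructures of the ambient exact structure. The only difference is that the paper works with Keller's reduced axiom system, under which closure of inflations under composition is automatic, so the $3\times 3$-lemma diagram chase you single out as the one step demanding real work --- while correct --- is precisely the verification the paper's choice of axioms renders unnecessary.
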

\begin{proof}
We first show that all $(\CC,-)$-conflations defines a new exact structure on $\EE$. It suffices to check Keller's axiom (Ex0), (Ex1), (Ex2) and (Ex2)$^{\op}$ in \cite[Appendix A]{keller}. Note that the class of all $(\CC,-)$-conflations are clearly closed under isomorphisms and (Ex0) ``the identity map of zero object is $(\CC,-)$-deflations'' is trivial. 

(Ex1) \emph{The composition of two $(\CC,-)$-deflations is a $(\CC,-)$-deflation.}

Suppose that $X \defl Y$ and $Y \defl Z$ be $(\CC,-)$-deflations. From the definition, $X \defl Y$ and $Y \defl Z$ are deflations in $\EE$. Thus the composition $X \defl Y \defl Z$ is also a deflation in $\EE$. Then the claim follows since the composition $\EE(\CC,X) \defl \EE(\CC,Y) \defl \EE(\CC,Z)$ is surjective.

(Ex2) \emph{The class of $(\CC,-)$-deflations is stable under pullbacks.}

Suppose that $L \infl M \defl N$ be a $(\CC,-)$-conflation and that $X \to N$ is an arbitrary morphism. Since $\EE$ is an exact category, there exists a pullback diagram
\[
\xymatrix{
L \inflr \ar@{=}[d] & E \ar[d] \deflr & X \ar[d] \\
L \inflr & M \deflr & N
}
\]
where two rows are conflations. We should check the above row is also a $(\CC,-)$-conflation. It is equivalent to say that any morphism $C \to X$ factors through $E \defl X$ for any $C \in \CC$. Let $C \to X$ be a morphism with $C$ in $\CC$. Since $M \defl N$ is a $(\CC,-)$-deflation, the composition $C \to X \to N$ factors through $M \defl N$.
Since the right square is a pullback diagram, there exists a morphism $C \to E$ such that $C \to E \to X$ is equal to $C \to X$. 

(Ex2)$^{\op}$  \emph{The class of $(\CC,-)$-inflations is stable under pushouts.}

Suppose that $L \infl M \defl N$ is a $(\CC,-)$-conflation, and $L \to X$ is an arbitrary morphism. Since $\EE$ is an exact category, There exists a pullback diagram
\[
\xymatrix{
L \inflr \ar[d] & M \ar[d] \deflr & N \ar@{=}[d] \\
X \inflr & E \deflr & N
}
\]
where two rows are conflations. We should show that any morphism $C\to N$ factors through $E\defl N$ for $C\in\CC$. This is easy because $C\to N$ factors through $M \defl N$ and $M \defl N$ factors through $E\defl N$.

Thus we have proved that the class of $(\CC,-)$-conflations defines an exact structure on $\EE$.

Dually the class of $(-,\CC)$-conflations also defines another exact structure on $\EE$. It is easy to check that the intersection of two exact structures gives another exact structure, which implies that the class of $\CC$-conflations also defines an exact structure on $\EE$.
\end{proof}

Next we consider the ideal quotients of an exact category. The following observation gives a natural way to introduce an exact structure to the ideal quotient of $\EE$.
\begin{proposition}\cite[Theorem 3.6]{di}\label{di}
Let $\EE$ be an exact category and $\CC$ an additive subcategory of $\EE$. Denote by $\pi:\EE \defl \EE/[\CC]$ the natural functor. Suppose that every object in $\CC$ is both projective and injective. Then the following are equivalent.
\begin{enumerate}
\item $\EE/[\CC]$ is an exact category whose conflations are precisely the essential images of conflations in $\EE$ under $\pi$.
\item The images of inflations in $\EE$ under $\pi$ are monomorphisms and the images of deflations in $\EE$ under $\pi$ are epimorphisms.
\end{enumerate}
In this case, if moreover $\EE$ has enough projectives $\PP$, then $\EE/[\CC]$ has enough projectives $\add\pi(\PP)$.
\end{proposition}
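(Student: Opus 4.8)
The implication (1) $\Rightarrow$ (2) is immediate: in any exact category deflations are epimorphisms and inflations are monomorphisms. For (2) $\Rightarrow$ (1) the plan is to verify that the essential image under $\pi$ of the class of conflations of $\EE$ satisfies Quillen's axioms in the form (Ex0), (Ex1), (Ex2), (Ex2)$^{\op}$ of \cite[Appendix A]{keller}. The conceptual heart is to show that for a conflation $A \xrightarrow{\iota} B \xrightarrow{p} C$ in $\EE$ the pair $\pi(A) \xrightarrow{\pi\iota} \pi(B) \xrightarrow{\pi p} \pi(C)$ is a kernel--cokernel pair in $\EE/[\CC]$. To see that $\pi p$ is a cokernel of $\pi\iota$, take $f\colon B \to X$ in $\EE$ with $\pi(f)\pi(\iota)=0$; then $f\iota$ factors through some $W \in \CC$, and since $W$ is injective in $\EE$ and $\iota$ is an inflation the induced map $A \to W$ extends along $\iota$, so subtracting we obtain $f'\colon B \to X$ with $f'\iota=0$ and $\pi(f')=\pi(f)$; as $p$ is the cokernel of $\iota$ in $\EE$ the map $f'$, hence $\pi(f)$, factors through $\pi p$, and the uniqueness of this factorisation is exactly the hypothesis that $\pi p$ is epic. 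Dually, using that objects of $\CC$ are \emph{projective} in $\EE$ and the hypothesis that $\pi\iota$ is monic, one checks that $\pi\iota$ is the kernel of $\pi p$. This is the only step using both halves of the projective--injective assumption and both halves of (2).

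Granting this, (Ex0) is trivial, and (Ex1), (Ex2), (Ex2)$^{\op}$ follow by lifting the morphisms involved along $\pi$ (which is full), forming the composition, pullback or pushout inside $\EE$, applying $\pi$, and checking that the relevant universal property descends. For instance, for (Ex2) one takes a deflation which, up to isomorphism in $\EE/[\CC]$, is $\pi(p)$ for a deflation $p\colon B \twoheadrightarrow C$ in $\EE$ and an arbitrary $\alpha\colon \pi(Y)\to\pi(C)$, lifts $\alpha$ to $a\colon Y\to C$, pulls back $\ker p \rightarrowtail B \xrightarrow{p} C$ along $a$ to a conflation $\ker p \rightarrowtail E \twoheadrightarrow Y$ in $\EE$, and applies $\pi$; that the resulting square is a pullback in $\EE/[\CC]$ uses the kernel--cokernel step for the conflation $E \rightarrowtail B\oplus Y \twoheadrightarrow C$ (for uniqueness, through the resulting monomorphism $\pi(E)\hookrightarrow\pi(B)\oplus\pi(Y)$) together with projectivity of the objects of $\CC$ (for existence, to lift a difference map along $p$), and (Ex2)$^{\op}$ is dual, using injectivity instead. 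The only real bookkeeping is that a conflation of the proposed class is specified only up to isomorphism in $\EE/[\CC]$; this is handled by noting that $\pi$ is full and annihilates $\CC$, so every isomorphism of $\EE/[\CC]$ is induced, after adding summands lying in $\add\CC$ (which does not change images under $\pi$), by an isomorphism of $\EE$, reducing each axiom to the corresponding statement in $\EE$.

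For the final assertion, observe first that each object of $\CC$ is a zero object of $\EE/[\CC]$, since its identity morphism lies in $[\CC]$. If $\EE$ has enough projectives $\PP$, then for $X\in\EE$ a deflation $P \twoheadrightarrow X$ with $P\in\PP$ gives a deflation $\pi(P)\twoheadrightarrow\pi(X)$ in $\EE/[\CC]$, and $\pi(P)$ is projective there: any lifting problem against a deflation can be lifted along the full functor $\pi$ to $\EE$, solved by projectivity of $P$ in $\EE$, and pushed back down. Hence $\EE/[\CC]$ has enough projectives and $\add\pi(\PP)$ consists of projective objects; conversely, a projective object of $\EE/[\CC]$ admits a deflation from some $\pi(P)$ with $P\in\PP$, which splits, so it lies in $\add\pi(\PP)$, whence $\PP(\EE/[\CC])=\add\pi(\PP)$. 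I expect the kernel--cokernel step to be the substantive obstacle, and the up-to-isomorphism bookkeeping in the axiom verification to be the most tedious part.
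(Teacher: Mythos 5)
The paper does not actually prove the equivalence of (1) and (2): it cites \cite[Theorem 3.6]{di} for that and only proves the final assertion on enough projectives. For that final assertion your argument (images of projectives remain projective because a lifting problem in $\EE/[\CC]$ can be pulled back along the full functor $\pi$ and solved in $\EE$; projective resolutions descend to conflations) coincides with the paper's. Your kernel--cokernel computation for $\pi(A)\to\pi(B)\to\pi(C)$ is correct and is indeed the step that consumes both halves of the projective--injective hypothesis and both halves of (2); as a substitute for the citation it is the right core.

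The genuine gap is in your reduction of the axiom checks to representatives, namely the claim that every isomorphism of $\EE/[\CC]$ is induced, after adding summands in $\add\CC$, by an isomorphism of $\EE$. The standard argument lifts $\alpha$ and $\alpha^{-1}$ to $a\colon X\to Y$ and $b\colon Y\to X$, writes $1_X-ba=ts$ through some $W\in\add\CC$, and produces the split monomorphism $X\to Y\oplus W$ with components $a$ and $s$; but to upgrade this to an isomorphism $X\oplus W'\cong Y\oplus W$ with $W'\in\add\CC$ one must split the complementary idempotent on $Y\oplus W$ and identify its image as an object of $\add\CC$, and neither step is available in an arbitrary exact category (no idempotent completeness is assumed here). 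This claim is genuinely load-bearing for (Ex1): the two conflations representing composable deflations of $\EE/[\CC]$ are glued along an isomorphism $\pi(C)\cong\pi(B')$ of the quotient, and a lift of that isomorphism to $\EE$ is merely a morphism $C\to B'$, so the composite in $\EE$ is not a composite of deflations. (For (Ex2) and (Ex2)$^{\op}$ the issue is harmless: one pulls back or pushes out the chosen representative in $\EE$ and transports the resulting bicartesian square along the isomorphisms, exactly as you describe.) A workable repair is to first prove the characterization that $\pi(f)\colon\pi(X)\to\pi(Y)$ is a deflation of $\EE/[\CC]$ if and only if there exist $W\in\add\CC$ and $g\colon W\to Y$ such that $[f,g]\colon X\oplus W\to Y$ is a deflation of $\EE$; granting this, (Ex1) reduces to composing honest deflations $X\oplus W\oplus W'\defl Y\oplus W'\defl Z$ in $\EE$. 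The ``only if'' direction of that characterization (absorbing the isomorphisms of $\EE/[\CC]$ into $\add\CC$-summands, using projectivity of $\CC$ and the obscure axiom) is where the real work lies, and it is not contained in your sketch.
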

\begin{proof}
We only prove the last assertion. It is clear from the definition of the exact structure on $\EE/[\CC]$ that the images of objects in $\PP$ under $\pi$ are projective in $\EE/[\CC]$. For any object $X$ in $\EE$, we have a conflation $\Omega X \infl P \defl X$ in $\EE$ with $P$ being projective. Sending it by $\pi$, we obtain a conflation $\u{\Omega X} \infl \u{P} \defl \u{X}$. From this it follows that $\EE/[\CC]$ has enough projectives, and that projective objects are precisely objects in $\add\pi(\PP)$.
\end{proof}

\medskip\noindent
{\bf Acknowledgement.}
The author would like to express his sincere gratitude to his supervisor Osamu Iyama for his patient guidance, numerous suggestions and valuable advice.

\end{document}